\theoremstyle{plain}
\newtheorem{theorem}{\bf Theorem}
\newtheorem*{theorem*}{Theorem}
\newtheorem{conjecture}[theorem]{\bf Conjecture}
\newtheorem{proposition}[theorem]{\bf Proposition}
\newtheorem{corollary}[theorem]{\bf Corollary}
\newtheorem{lemma}[theorem]{\bf Lemma}
\theoremstyle{definition}
\newenvironment{remark}[1][Remark.]{\begin{trivlist}
		\item[\hskip \labelsep {\bfseries #1}]}{\end{trivlist}}
\numberwithin{theorem}{section}
\numberwithin{equation}{section}
\newcommand{\Rea}{{\mathbb R}}
\newcommand{\fhomology}[2]{\tilde{H}_{#1}\left(#2\right)}
\newcommand{\rhomology}[2]{\tilde{H}_{#1}\left(#2 \right)}
\newcommand{\relhomology}[2]{H_{#1}\left(#2\right)}
\DeclareMathOperator{\lk}{lk}
\DeclareMathOperator{\cost}{cost}
\DeclareMathOperator{\st}{st}
\newcommand{\tol}[2]{\mathcal{T}_{#2}(#1)}
\begin{document}

	\title{Leray numbers of tolerance complexes}
	
 \author{Minki Kim\thanks{\href{mailto:minkikim@ibs.re.kr}{minkikim@ibs.re.kr}. This work was supported by the Institute for Basic Science (IBS-R029-C1).}}
 \affil{Discrete Mathematics Group, Institute for Basic Science,
Daejeon, South Korea.}
    
    \author{Alan Lew\thanks{\href{mailto:alan@campus.technion.ac.il}{alan@campus.technion.ac.il}. Supported by ISF grant no. 326/16.}}
	\affil{Department of Mathematics, Technion, Haifa 32000, Israel}
	
	\date{\today}
	\maketitle

\begin{abstract}
    Let $K$ be a simplicial complex on vertex set $V$. $K$ is called \emph{$d$-Leray} if the homology groups of any induced subcomplex of $K$ are trivial  in dimensions $d$ and higher. $K$ is called \emph{$d$-collapsible} if it can be reduced to the void complex by sequentially removing a simplex of size at most $d$ that is contained in a unique maximal face.

   We define the \emph{$t$-tolerance complex} of $K$, $\tol{K}{t}$, as the simplicial complex on vertex set $V$ whose simplices are formed as the union of a simplex in $K$ and a set of size at most $t$.
   We prove that for any $d$ and $t$ there exists a positive integer $h(t,d)$ such that, for every $d$-collapsible complex $K$, the $t$-tolerance complex $\mathcal{T}_t(K)$ is $h(t,d)$-Leray. 
   
   The definition of the complex $\tol{K}{t}$ is motivated by results of Montejano and Oliveros on ``tolerant" versions of Helly's theorem. As an application, we present some new tolerant versions of the colorful Helly theorem.
\end{abstract}

\section{Introduction}
Let $\mathcal{F}$ be a finite family of non-empty sets. The {\em nerve} of $\mathcal{F}$ is the simplicial complex
\[
N(\mathcal{F}) = \{\mathcal{F}' \subset \mathcal{F}: \bigcap_{A\in\mathcal{F}'} A \neq \varnothing \}.
\]
A simplicial complex $K$ is called {\em $d$-representable} if it isomorphic to the nerve of a family of convex sets in $\mathbb{R}^d$.

Two closely related notions are the $d$-collapsibility and $d$-Lerayness of a complex, which were introduced by Wegner in \cite{Weg75} as generalizations of $d$-representability. Let $V$ be a finite set, and let $K$ be a simplicial complex on vertex set $V$.
A face $\sigma$ in $K$ is said to be {\em free} if there exists exactly one maximal face of $K$ that contains $\sigma$.
Given a free face $\sigma$ of cardinality at most $d$, we call the operation of removing from $K$ all faces containing $\sigma$ an {\em elementary $d$-collapse}.
The complex $K$ is called {\em $d$-collapsible} if all of its faces can be removed by performing a sequence of elementary $d$-collapses.
The {\em collapsibility number} of $K$, denoted by $C(K)$, is the minimum integer $d$ such that $K$ is $d$-collapsible.

For an integer $i\geq -1$, let $\fhomology{i}{K}$ be the reduced $i$-dimensional homology group of $K$ with coefficients in $\mathbb{R}$.
For $U\subset V$, the subcomplex of $K$ induced by $U$ is the complex $K[U]=\{\sigma\in K:\, \sigma\subset U\}.$ The complex $K$ is called {\em $d$-Leray} if all its induced subcomplexes have trivial reduced homology groups in dimensions $d$ and higher; that is, if
$ \fhomology{i}{K[U]} = 0$ for all $i \geq d$ and $U\subset V$.
The {\em Leray number} of $K$, denoted by $L(K)$, is the minimum integer $d$ such that $K$ is $d$-Leray.

It was shown in \cite{Weg75} that every $d$-representable complex is $d$-collapsible, and that every $d$-collapsible complex is $d$-Leray.
See the survey paper by Tancer~\cite{Tan13} for an overview of $d$-representability, $d$-collapsibility and $d$-Lerayness.

Helly's theorem~\cite{Hel23} is a fundamental result in combinatorial geometry that asserts the following: for every finite family of convex sets in $\mathbb{R}^d$, if every subfamily of size at most $d+1$ has a point in common, then the whole family has a point in common.
See, for example, \cite{ALP17} for an overview of results and open problems related to Helly's theorem.

%Helly's theorem may be stated in terms of simplicial complexes as follows:

Let $K$ be a complex on vertex set $V$. A \emph{missing face} of $K$ is a set $\tau\subset V$ such that $\tau\notin K$ but $\sigma\in K$ for any $\sigma\subsetneq \tau$. The \emph{Helly number} of $K$, denoted by \emph{$h(K)$}, is the maximum dimension of a missing face of $K$. Helly's theorem is equivalent to the fact that, if $K$ is $d$-representable, then $h(K)\leq d$.

Since the boundary of a $k$-dimensional simplex has non-trivial homology in dimension $k-1$, then every $d$-Leray complex $K$ does not contain the boundary of a $k$-dimensional simplex as an induced subcomplex, for any $k> d$. This shows that the bound $h(K) \leq d$ also holds when $K$ is $d$-Leray (and therefore the same holds when $K$ is $d$-collapsible).

Let $\mathcal{H}$ be an $r$-uniform hypergraph on vertex set $V$.
The \emph{covering number} of $\mathcal{H}$, denoted by $\tau(\mathcal{H})$, is the minimum size of a set $U\subset V$ such that $U$ intersects all the edges of $\mathcal{H}$. 
The hypergraph $\mathcal{H}$ is called {\em $t$-critical} if $\tau(\mathcal{H})=t$ and $\tau(\mathcal{H}')<t$ for every hypergraph $\mathcal{H}'$ that is obtained from $\mathcal{H}$ be removing an edge. 
The \emph{Erd\H{o}s-Gallai number} $\eta(r,t)$ is the maximum number of vertices in an $r$-uniform $t$-critical hypergraph.
Equivalently, $\eta(r,t)$ is the minimum positive integer $n$ such that for every $r$-uniform hypergraph $\mathcal{H}$ with $\tau(\mathcal{H}) > t$ there exists an $\mathcal{H}' \subset \mathcal{H}$ with $|V(\mathcal{H}')| \leq n$ such that $\tau(\mathcal{H}') > t$.
Erd\H{o}s and Gallai showed in \cite{Erd61} that $\eta(2,t)=2t$ and $\eta(r,2)=\left\lfloor \left(\frac{r+2}{2}\right)^2\right\rfloor$. For general $r$ and $t$, Tuza proved in \cite{Tuz85} the bound
\[
    \eta(r,t)< \binom{r+t-1}{r-1}+\binom{r+t-2}{r-1},
\]
which is tight up to a constant factor. 
In particular, we have $\eta(r,t)=O(t^{r-1})$ for $r$ fixed and $t\to\infty$, and $\eta(r,t)=O(r^t)$ for $t$ fixed and $r\to\infty$.

Let $\mathcal{F}$ be a finite family of sets. 
We say that $\mathcal{F}$ has a \emph{point in common with tolerance $t$} if there is a subfamily $\mathcal{F}'\subset \mathcal{F}$ such that $|\mathcal{F}'|\geq |\mathcal{F}|-t$ and $\cap_{A\in\mathcal{F}'} A \neq \varnothing$. 
In \cite{MO11}, Montejano and Oliveros proved the following ``tolerant version" of  Helly's theorem.
\begin{theorem}[Montejano and Oliveros~{\cite[Theorem 3.1]{MO11}}]\label{thm:tolerance_helly_convex}
Let $\mathcal{F}$ be a finite family of convex sets in $\Rea^d$, and let $t\geq0$ be an integer. If every subfamily $\mathcal{F}'\subset\mathcal{F}$ of size at most $\eta(d+1,t+1)$ has a point in common with tolerance $t$, then $\mathcal{F}$ has a point in common with tolerance $t$.
\end{theorem}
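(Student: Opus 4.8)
The plan is to translate ``point in common with tolerance $t$'' into a statement about covering numbers of hypergraphs with edges of bounded size, and then to apply the Erd\H{o}s-Gallai number to localize the failure of the property to a bounded number of sets. Write $\mathcal{F}=\{A_v:v\in V\}$ and call a set $\sigma\subseteq V$ a \emph{blocker} if $\bigcap_{v\in\sigma}A_v=\varnothing$. Blockers are upward closed, and since the $A_v$ are convex in $\Rea^d$, Helly's theorem says that every blocker contains a blocker of size at most $d+1$. Let $\mathcal{H}$ be the hypergraph on $V$ whose edges are the minimal blockers; thus all edges of $\mathcal{H}$ have size at most $d+1$. (Equivalently, $\mathcal{H}$ is the set of missing faces of the nerve $N(\mathcal{F})$, and the only input from convexity that the argument uses is the bound $h(N(\mathcal{F}))\le d$ provided by Helly's theorem.)

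The key step is the equivalence: for every $U\subseteq V$, the subfamily $\{A_v:v\in U\}$ has a point in common with tolerance $t$ if and only if $\tau(\mathcal{H}[U])\le t$, where $\mathcal{H}[U]=\{e\in\mathcal{H}:e\subseteq U\}$. Indeed, $\{A_v:v\in U\}$ has a point in common with tolerance $t$ exactly when there is a set $R\subseteq U$ with $|R|\le t$ such that $U\setminus R$ is not a blocker; since blockers are upward closed, $U\setminus R$ fails to be a blocker precisely when it contains no edge of $\mathcal{H}$, that is, when $R$ is a cover of $\mathcal{H}[U]$. In particular, $\mathcal{F}$ has no point in common with tolerance $t$ if and only if $\tau(\mathcal{H})\ge t+1$.

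Now I prove the contrapositive. Assume $\mathcal{F}$ has no point in common with tolerance $t$, so $\tau(\mathcal{H})\ge t+1$ and $\mathcal{H}$ has all edges of size at most $d+1$. I claim such a hypergraph always has a sub-hypergraph on at most $\eta(d+1,t+1)$ vertices with covering number still at least $t+1$; granting this, if $\mathcal{H}'$ is such a sub-hypergraph and $U:=V(\mathcal{H}')$, then $\mathcal{H}'\subseteq\mathcal{H}[U]$ gives $\tau(\mathcal{H}[U])\ge t+1>t$, so by the previous paragraph $\{A_v:v\in U\}$ is a subfamily of at most $\eta(d+1,t+1)$ sets with no point in common with tolerance $t$, contradicting the hypothesis. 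To prove the claim, reduce to the uniform setting underlying the Erd\H{o}s-Gallai number: replace each edge $e\in\mathcal{H}$ of size $k<d+1$ by $e$ together with $d+1-k$ fresh private vertices, obtaining a $(d+1)$-uniform hypergraph $\mathcal{H}^{*}$. One checks that $\tau(\mathcal{H}^{*})=\tau(\mathcal{H})$: an enlarged edge contains the original edge, and in any cover of $\mathcal{H}^{*}$ a private vertex may be swapped for an original vertex of its (unique) edge. By the definition of $\eta$, the hypergraph $\mathcal{H}^{*}$ has a sub-hypergraph $\mathcal{H}^{**}$ on at most $\eta(d+1,t+1)$ vertices with $\tau(\mathcal{H}^{**})\ge t+1$; deleting the private vertices only shrinks edges and hence cannot decrease the covering number, so the corresponding sub-hypergraph of $\mathcal{H}$ has at most $\eta(d+1,t+1)$ vertices and covering number at least $t+1$, as claimed.

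The one point that needs genuine care is this last reduction, from hypergraphs with edges of size at most $d+1$ to $(d+1)$-uniform hypergraphs, i.e.\ checking that the private-vertex inflation and its inverse respectively preserve and do not decrease covering numbers; everything else is routine once the equivalence of the second paragraph is in place, and that equivalence is really the heart of the matter, since it is what makes the Erd\H{o}s-Gallai number the right tool. I also note that exactly the same argument yields the purely combinatorial statement $h(\tol{K}{t})\le\eta(d+1,t+1)-1$ for every complex $K$ with $h(K)\le d$, and in particular for every $d$-collapsible or $d$-Leray complex $K$.
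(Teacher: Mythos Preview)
Your proof is correct and follows essentially the same route as the paper: reformulate ``point in common with tolerance $t$'' as a bound on the covering number of a hypergraph whose edges have size at most $d+1$, and then invoke the Erd\H{o}s--Gallai number to localize a failure to at most $\eta(d+1,t+1)$ vertices. The paper does not prove this theorem directly (it is quoted from \cite{MO11}), but it gives exactly this covering-number argument for the colorful version (Theorem~\ref{thm:tol_col_hel}) and observes that specializing recovers the present statement.

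One small simplification you might adopt: rather than taking $\mathcal{H}$ to be the hypergraph of \emph{minimal} blockers (which forces you into the padding step with private vertices), take $\mathcal{H}$ to be the hypergraph of \emph{all} $(d+1)$-subsets $\sigma\subset V$ with $\bigcap_{v\in\sigma}A_v=\varnothing$. Helly's theorem still gives the equivalence $\tau(\mathcal{H}[U])\le t \iff \{A_v:v\in U\}$ has a point in common with tolerance $t$, and now $\mathcal{H}$ is $(d+1)$-uniform from the start, so the Erd\H{o}s--Gallai definition applies directly. Your padding argument is nonetheless correct as written: inflating with private vertices preserves $\tau$, and shrinking edges back can only raise $\tau$.
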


In fact, it was shown in \cite{MO11} that any family of sets satisfying a Helly property satisfies also a corresponding ``tolerant Helly property". In terms of simplicial complexes, this may be stated as follows:

Let $K$ be a simplicial complex on vertex set $V$, and let $t\geq 0$ be an integer. Define the simplicial complex
\begin{align*}
    \tol{K}{t} &=  \{\eta\cup\tau:\, \eta\in K, \, \tau\subset V,\, |\tau|\leq t\}     \\
    &= \{\sigma\subset V: \exists \eta\subset \sigma, |\sigma\setminus \eta|\leq t, \, \eta\in K\}.
\end{align*}
We call $\tol{K}{t}$ the \emph{$t$-tolerance complex} of $K$. Note that $\tol{K}{0}=K$ for every complex $K$.

\begin{theorem}[Montejano-Oliveros {\cite[Theorem 1.1]{MO11}}]\label{thm:tolerance_helly_general}
Let $K$ be a simplicial complex with $h(K)\leq d$, and let $t\geq 0$ be an integer. Then, $h(\tol{K}{t})\leq \eta(d+1,t+1)-1$.
\end{theorem}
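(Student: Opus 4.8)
The plan is to show that every missing face $\tau$ of $\tol{K}{t}$ satisfies $|\tau|\le \eta(d+1,t+1)$, which is exactly the assertion $h(\tol{K}{t})\le \eta(d+1,t+1)-1$. The cases $t=0$ (where $\tol{K}{0}=K$ and $\eta(d+1,1)=d+1$) and $d=0$ are immediate, so I will assume $d,t\ge 1$. The bridge between the two complexes is the following dictionary: for $\tau'\subseteq V$, let $\mathcal{M}(\tau')$ be the hypergraph on vertex set $\tau'$ whose edges are the missing faces of $K$ contained in $\tau'$. Since $\sigma\in K$ if and only if $\sigma$ contains no missing face of $K$, a set $U\subseteq\tau'$ is a cover of $\mathcal{M}(\tau')$ exactly when $\tau'\setminus U\in K$. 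Consequently, $\tau'\in\tol{K}{t}$ --- that is, $\tau'=\eta\cup U$ for some $\eta\in K$ with $|U|\le t$ --- holds if and only if some $U\subseteq\tau'$ of size at most $t$ covers $\mathcal{M}(\tau')$, i.e.\ if and only if $\tau(\mathcal{M}(\tau'))\le t$. I will use this equivalence in both directions.

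Now fix a missing face $\tau$ of $\tol{K}{t}$ and suppose, for contradiction, that $|\tau|>\eta(d+1,t+1)$. By the dictionary, $\tau(\mathcal{M}(\tau))\ge t+1$. Since $h(K)\le d$, every edge of $\mathcal{M}(\tau)$ has size at most $d+1$, but this hypergraph need not be uniform, so the next step is to uniformize it. From the construction of $t+1$ pairwise disjoint $(d+1)$-element edges we get $\eta(d+1,t+1)\ge (t+1)(d+1)$, so the assumption forces $|\tau|>\tau(\mathcal{M}(\tau))+(d+1)$; there is room to spare. Define the $(d+1)$-uniform hypergraph $\mathcal{H}$ on vertex set $\tau$ by replacing each edge $e$ of $\mathcal{M}(\tau)$ with all $(d+1)$-subsets of $\tau$ containing $e$. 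Every edge of $\mathcal{H}$ contains an edge of $\mathcal{M}(\tau)$, so any cover of $\mathcal{M}(\tau)$ also covers $\mathcal{H}$; conversely, if some cover $U$ of $\mathcal{H}$ failed to cover $\mathcal{M}(\tau)$ then $|U|<\tau(\mathcal{M}(\tau))$, hence $|\tau\setminus U|\ge d+1$, and an uncovered edge $e$ of $\mathcal{M}(\tau)$ could be extended to a $(d+1)$-subset of $\tau\setminus U$ lying in $\mathcal{H}$, a contradiction. Hence $\tau(\mathcal{H})=\tau(\mathcal{M}(\tau))\ge t+1$. Finally, pass to an edge-minimal $\mathcal{H}'\subseteq\mathcal{H}$ with $\tau(\mathcal{H}')\ge t+1$. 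A routine argument --- if deleting an edge $e$ dropped the covering number below $t$, adjoin a vertex of $e$ to the resulting smaller cover --- shows that deleting any single edge of $\mathcal{H}'$ lowers its covering number to exactly $t$; thus $\mathcal{H}'$ is a $(d+1)$-uniform, $(t+1)$-critical hypergraph, and by the definition of the Erd\H{o}s-Gallai number $|V(\mathcal{H}')|\le\eta(d+1,t+1)$.

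It remains to transfer this small obstruction back to $\tol{K}{t}$. Set $\tau'=V(\mathcal{H}')\subseteq\tau$. Each edge $f$ of $\mathcal{H}'$ contains some missing face $e_f$ of $K$ with $e_f\subseteq f\subseteq\tau'$, so $e_f\in\mathcal{M}(\tau')$; and since $e_f\subseteq f$, any cover of $\{e_f : f\in\mathcal{H}'\}$ also covers $\mathcal{H}'$, whence $\tau(\mathcal{M}(\tau'))\ge\tau(\mathcal{H}')\ge t+1$. By the dictionary, $\tau'\notin\tol{K}{t}$. But $|\tau'|\le\eta(d+1,t+1)<|\tau|$, so $\tau'$ is a proper subset of $\tau$, contradicting the fact that every proper subset of the missing face $\tau$ must lie in $\tol{K}{t}$. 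Therefore $|\tau|\le\eta(d+1,t+1)$, as required. The one genuinely delicate point is the uniformization in the second paragraph: the padding must not decrease the covering number, which is precisely why the argument is run by contradiction --- assuming that $|\tau|$ exceeds the target bound guarantees enough spare vertices to make the padding harmless.
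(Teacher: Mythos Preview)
The paper does not give its own proof of this theorem; it is quoted from \cite{MO11}. Your approach --- translate membership in $\tol{K}{t}$ into a covering condition on the hypergraph of missing faces of $K$, then invoke the defining property of $\eta(d+1,t+1)$ --- is exactly the Montejano--Oliveros argument and is correct in substance.

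There is, however, a logical slip in the uniformization step. The implication ``if some cover $U$ of $\mathcal{H}$ failed to cover $\mathcal{M}(\tau)$ then $|U|<\tau(\mathcal{M}(\tau))$'' is simply false as stated (a non-covering set can be of any size), and the preceding claim $|\tau|>\tau(\mathcal{M}(\tau))+(d+1)$ is unjustified: at that stage you only have the lower bound $\tau(\mathcal{M}(\tau))\ge t+1$, not an upper bound. The repair is easy, since you do not actually need the equality $\tau(\mathcal{H})=\tau(\mathcal{M}(\tau))$ but only $\tau(\mathcal{H})>t$. Assume instead that $U$ covers $\mathcal{H}$ with $|U|\le t$; then $|U|<t+1\le\tau(\mathcal{M}(\tau))$, so some edge $e\in\mathcal{M}(\tau)$ lies in $\tau\setminus U$, and $|\tau\setminus U|\ge|\tau|-t>(t+1)(d+1)-t\ge d+1$, so $e$ extends within $\tau\setminus U$ to an uncovered edge of $\mathcal{H}$, a contradiction. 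With this correction the rest of your argument goes through verbatim. (Alternatively, you could first observe that because $\tau$ is a \emph{missing} face of $\tol{K}{t}$, removing any vertex $v$ gives a set in $\tol{K}{t}$, so $\tau(\mathcal{M}(\tau\setminus\{v\}))\le t$ and hence $\tau(\mathcal{M}(\tau))\le t+1$; this pins down $\tau(\mathcal{M}(\tau))=t+1$ and would also rescue the unjustified inequality.)
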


It is natural to ask whether we can achieve a stronger conclusion by strengthening the assumptions on $K$. By replacing the Helly number with the collapsibility or Leray number, the following conjectures arise:

\begin{conjecture}\label{conj:tolerance_leray_to_leray}
Let $K$ be a $d$-Leray simplicial complex. Then, $\tol{K}{t}$ is $(\eta(d+1,t+1)-1)$-Leray.
\end{conjecture}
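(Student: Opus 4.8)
\medskip
\noindent\emph{Proof proposal.}
The plan is an induction on $|V|+t$ driven by a Mayer--Vietoris decomposition at a vertex. First I would reduce to the homology of $\tol{K}{t}$ itself: a direct check from the definition gives $\tol{K}{t}[U]=\tol{K[U]}{t}$ for every $U\subseteq V$, and an induced subcomplex of a $d$-Leray complex is $d$-Leray, so it is enough to prove that \emph{for every} $d$-Leray complex $K$ one has $\fhomology{i}{\tol{K}{t}}=0$ for all $i\geq\eta(d+1,t+1)-1$. The base cases are immediate: for $t=0$ this is the hypothesis, since $\eta(d+1,1)=d+1$; and for $|V|\leq t$ the complex $\tol{K}{t}$ is a full simplex.

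For the inductive step, fix $v\in V$ and use the standard recursion $L(\tol{K}{t})\leq\max\{L(\tol{K}{t}[V\setminus\{v\}]),\,L(\lk_{\tol{K}{t}}(v))+1\}$ coming from the Mayer--Vietoris sequence of the star--deletion decomposition at $v$. Since $\tol{K}{t}[V\setminus\{v\}]=\tol{K[V\setminus\{v\}]}{t}$ with $K[V\setminus\{v\}]$ again $d$-Leray, the first term is at most $\eta(d+1,t+1)-1$ by induction, so it remains to show $L(\lk_{\tol{K}{t}}(v))\leq\eta(d+1,t+1)-2$. A short computation with the definitions gives the identity
\[
\lk_{\tol{K}{t}}(v)\;=\;\tol{K[V\setminus\{v\}]}{t-1}\;\cup\;\tol{\lk_K(v)}{t}
\]
of complexes on $V\setminus\{v\}$, and one more Mayer--Vietoris step, using $L(A\cup B)\leq\max\{L(A),L(B),L(A\cap B)+1\}$, reduces the task to bounding the Leray numbers of $\tol{K[V\setminus\{v\}]}{t-1}$ (at most $\eta(d+1,t)-1$ by induction), of $\tol{\lk_K(v)}{t}$, and of the intersection $\tol{K[V\setminus\{v\}]}{t-1}\cap\tol{\lk_K(v)}{t}$.

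The hard part is exactly here, and is, I expect, why the statement is only a conjecture. Under the stronger hypothesis that $K$ is $d$-\emph{collapsible} the link $\lk_K(v)$ is $(d-1)$-collapsible, hence $(d-1)$-Leray, so $L(\tol{\lk_K(v)}{t})\leq\eta(d,t+1)-1$ by induction, and all three terms come out at most $\eta(d+1,t+1)-2$ using standard monotonicity of $\eta$. But a $d$-Leray complex can have links that are only $d$-Leray --- for instance the $6$-vertex triangulation of $\mathbb{RP}^2$ is $2$-Leray over $\Rea$, while its vertex links are $5$-cycles, which are not $1$-Leray --- so the induction gives only $L(\tol{\lk_K(v)}{t})\leq\eta(d+1,t+1)-1$, one too large, and there is no room to spare, since $\eta(d+1,t+1)-1$ is already attained at the level of Helly numbers by Theorem~\ref{thm:tolerance_helly_general} together with $h\leq L$. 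Thus a proof must extract a genuinely sharper estimate, and I see two promising routes. One is to abandon the vertex-by-vertex induction for a global argument: write $\tol{K}{t}=\bigcup_{S\in\binom{V}{t}}K_S$ with $K_S=\{\sigma\subseteq V:\sigma\setminus S\in K\}=K[V\setminus S]\ast 2^{S}$ a cone, and run the Mayer--Vietoris spectral sequence of this cover; any intersection $K_{S_0}\cap\cdots\cap K_{S_p}$ with $S_0\cap\cdots\cap S_p\neq\varnothing$ is again a cone, hence acyclic, so one is left to show that the remaining intersections, indexed by families of $t$-sets with empty common intersection, have vanishing reduced homology in every degree $\geq\eta(d+1,t+1)-1-p$; this is where the Erd\H{o}s--Gallai function and the $d$-Lerayness of $K$ should enter, carrying the Montejano--Oliveros counting argument behind Theorem~\ref{thm:tolerance_helly_general} up to the homological level. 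The other is to locate an inductive invariant strictly between Lerayness and collapsibility that is implied by $d$-Lerayness and still drops by one under links. Controlling either of these is, I believe, the real content of the conjecture.
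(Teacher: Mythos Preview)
The statement you attempt is Conjecture~\ref{conj:tolerance_leray_to_leray}, which the paper leaves \emph{open}: there is no proof to compare against. The paper's main theorem (Theorem~\ref{thm:main_tolerance}) establishes only the weaker bound $h(t,d)$, and only under the stronger hypothesis of $d$-collapsibility; the sharp bound $\eta(d+1,t+1)-1$ is obtained solely for $d=1$ and for $(d,t)=(2,1)$. So your diagnosis that ``the statement is only a conjecture'' is exactly right, and your write-up should be read as an outline of obstacles rather than a proof.

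Two of your side claims deserve correction. First, it is \emph{not} true that the link of a vertex in a $d$-collapsible complex is $(d-1)$-collapsible. The paper (Lemma~\ref{lemma:coll_of_link}) only guarantees $d$-collapsibility of links, and this is sharp: take $K$ on $\{1,2,3,4\}$ with maximal faces $\{1,2,3\}$ and $\{1,2,4\}$; then $K$ is $1$-collapsible, but $\lk(K,1)$ is the path $3$--$2$--$4$, which is not $0$-collapsible. Consequently your sentence ``under the stronger hypothesis that $K$ is $d$-collapsible \dots\ all three terms come out at most $\eta(d+1,t+1)-2$'' does not go through, and indeed even under collapsibility the paper does \emph{not} reach the conjectural bound by the vertex-link recursion you describe; it uses a quite different induction along a collapsing sequence (Proposition~\ref{prop:main}) and still only gets $h(t,d)$. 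Second, the intersection $\tol{K[V\setminus\{v\}]}{t-1}\cap\tol{\lk_K(v)}{t}$ that your second Mayer--Vietoris step produces is not itself a tolerance complex of any obvious auxiliary complex, so you have no inductive handle on its Leray number; this is a genuine additional gap beyond the link issue.

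Your decomposition $\lk_{\tol{K}{t}}(v)=\tol{K[V\setminus\{v\}]}{t-1}\cup\tol{\lk_K(v)}{t}$ and your $\mathbb{RP}^2_6$ example showing that Leray numbers need not drop under links are both correct and make the difficulty precise. The spectral-sequence approach via the cover $\tol{K}{t}=\bigcup_{|S|=t} K[V\setminus S]\ast 2^{S}$ is a reasonable suggestion, but note that the nontrivial intersections you must control are precisely intersections of induced subcomplexes of $K$ joined with small simplices, and bounding their homology in the required range is again equivalent to a Leray-type statement about $K$; so this route, too, presently lacks a key lemma.
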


\begin{conjecture}\label{conj:tolerance_coll_to_coll}
Let $K$ be a $d$-collapsible simplicial complex. Then, $\tol{K}{t}$ is $(\eta(d+1,t+1)-1)$-collapsible.
\end{conjecture}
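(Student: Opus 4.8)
The bound stated in Conjecture~\ref{conj:tolerance_coll_to_coll} is the sharp, conjectural counterpart of the existence statement actually established in this paper (some finite $h(t,d)$), so a realistic plan is to lay out the induction that would give the sharp value and to isolate the step where the needed control is missing. I would argue by a simultaneous induction on $t$ and on the number of vertices of $K$ (equivalently, on the length of a fixed $d$-collapsing sequence of $K$). The base case $t=0$ is exactly the hypothesis, since $\tol{K}{0}=K$ and $\eta(d+1,1)=d+1$, hence $\eta(d+1,1)-1=d$; and $d=0$ is trivial because a $0$-collapsible complex is a simplex and so is its tolerance complex. It is also worth recording three elementary identities for set systems $A,B$: the semigroup property $\mathcal{T}_s(\mathcal{T}_{s'}(K))=\mathcal{T}_{s+s'}(K)$, the restriction rule $\tol{K}{t}[U]=\mathcal{T}_t(K[U])$, and $\mathcal{T}_t(A\cup B)=\mathcal{T}_t(A)\cup\mathcal{T}_t(B)$. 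The first tempts one to deduce general $t$ from $t$ iterations of the case $t=1$, but this cannot give the sharp bound: $\eta(d+1,t+1)-1$ grows only polynomially in $t$ (for fixed $d$), whereas iterating any fixed map $d\mapsto f(d)$ produces a tower, so the two inductions really must be run together.

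For the inductive step I would peel off the first elementary collapse of $K$: let $\sigma$ be a free face of $K$ with $|\sigma|\le d$ contained in its unique facet $M$, and let $K'$ be the $d$-collapsible complex obtained by the corresponding elementary $d$-collapse (same vertex set, shorter collapsing sequence). Writing $K=K'\cup[\sigma,M]$ as set systems and applying $\mathcal{T}_t(A\cup B)=\mathcal{T}_t(A)\cup\mathcal{T}_t(B)$, one gets $\tol{K}{t}=\mathcal{T}_t(K')\cup P$, where $P=\mathcal{T}_t([\sigma,M])=\{G:\sigma\subseteq G,\ |G\setminus M|\le t\}$ is the join of the simplex on $\sigma$ with a ``thickened simplex'' on $V\setminus\sigma$ relative to $M\setminus\sigma$; realizing the thickened simplex as the nerve of hyperplanes in general position shows that $P$ is $t$-representable, hence $t$-collapsible, hence $(\eta(d+1,t+1)-1)$-collapsible. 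By the inner induction $\mathcal{T}_t(K')$ is $(\eta(d+1,t+1)-1)$-collapsible. One would then invoke a Mayer--Vietoris/gluing bound for the collapsibility number of a union $A\cup B$ in terms of $C(A)$, $C(B)$ and $C(A\cap B)+1$, reducing everything to showing that $\mathcal{T}_t(K')\cap P$ is $(\eta(d+1,t+1)-2)$-collapsible. An essentially equivalent organisation replaces $\sigma,M$ by a vertex $v$ and uses $\tol{K}{t}[V\setminus\{v\}]=\mathcal{T}_t(K[V\setminus\{v\}])$ together with the identity $\lk_{\tol{K}{t}}(v)=\mathcal{T}_t(\lk_K(v))\cup\mathcal{T}_{t-1}(K[V\setminus\{v\}])$ and the standard gluing lemma ``$\lk_L(v)$ is $(c-1)$-collapsible and $L[V\setminus\{v\}]$ is $c$-collapsible $\Rightarrow$ $L$ is $c$-collapsible''; here the outer induction controls $\mathcal{T}_{t-1}(K[V\setminus\{v\}])$ via $\eta(d+1,t)-1$, and when $v$ is a free vertex of $K$ the link $\lk_K(v)$ is a simplex, so that case is clean---but a $d$-collapsible complex need not have a free vertex (e.g.\ a triangle's boundary), which is exactly why one falls back to the first-elementary-collapse version, and again one is left to bound an intersection of two tolerance complexes.

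The main obstacle is precisely this last step, and it has two intertwined parts. First, arithmetic: closing the induction with the \emph{sharp} constant forces a recursive inequality on the Erd\H{o}s--Gallai numbers themselves---roughly $\eta(d+1,t+1)\ge 1+\eta(d,t+1)$ and $\eta(d+1,t+1)\ge 1+(\text{a term built from }\eta(d+1,t))$, reminiscent of the Pascal-type recursion satisfied by Tuza's bound $\binom{r+t-1}{r-1}+\binom{r+t-2}{r-1}$---and establishing such inequalities requires understanding the structure of extremal $t$-critical hypergraphs; since $\eta(r,t)$ is not known in closed form in general, ``sharpness'' here is only sharpness relative to a notoriously hard extremal function, which is already a warning sign. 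Second, and more seriously, the intersection $\mathcal{T}_t(K')\cap P$ (respectively $\mathcal{T}_t(\lk_K(v))\cap\mathcal{T}_{t-1}(K[V\setminus\{v\}])$) is \emph{not} itself the $s$-tolerance complex of a $d$-collapsible complex: the face $\eta\in K'$ witnessing membership in $\mathcal{T}_t(K')$ need not lie near $M$, so the intersection condition does not collapse to a clean tolerance-complex description, and---unlike the Leray number---the collapsibility number is fragile under unions and intersections, so there is no soft homological escape via Mayer--Vietoris. The two plausible resolutions are therefore: (i) analyse this intersection directly, presumably again through a critical-hypergraph argument in the spirit of Montejano--Oliveros, to run the induction with the exact constant; or (ii) settle for crude bounds at the union step, which loses the exact value $\eta(d+1,t+1)-1$ but still yields a finite $h(t,d)$---and this weaker conclusion (with a Leray, rather than collapsibility, bound for the output) is what the present paper proves, since dropping ``sharp'' removes the Erd\H{o}s--Gallai arithmetic entirely and makes the intersection term harmless.
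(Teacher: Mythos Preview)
The statement you were asked to prove is a \emph{conjecture} in the paper, not a theorem: the paper does not contain a proof of it, and explicitly leaves it open. So there is no ``paper's own proof'' to compare your attempt against. You seem to be aware of this---your proposal is really an outline of how one might try to attack the conjecture, together with a correct diagnosis of where the attack stalls, and you end by noting that the paper only establishes the weaker Leray bound $h(t,d)$ of Theorem~\ref{thm:main_tolerance}.

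That said, since you sketch an approach, it is worth recording how it differs from what the paper actually does for the weaker result. Your plan is to write $\tol{K}{t}=\tol{K'}{t}\cup P$ and then control the union via a Mayer--Vietoris-type bound for collapsibility; as you yourself point out, no such bound exists in general, and this is not merely a technical nuisance but the essential reason the collapsibility version remains open. The paper avoids this by abandoning collapsibility on the output side and working with homology directly: rather than a union decomposition, it computes the \emph{relative} homology $H_k(\tol{K}{t},\tol{\cost(K,\sigma)}{t})$ via Lemma~\ref{lemma:relative_decomposition} and Proposition~\ref{prop:main}, which identifies the difference $\tol{K}{t}\setminus\tol{\cost(K,\sigma)}{t}$ combinatorially in terms of tolerance complexes of links with \emph{strictly smaller} tolerance parameter. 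This is where the induction on $t$ enters, and the Kalai--Meshulam Leray bound (Theorem~\ref{thm:kalai_meshulam_leray}) is applied to a union of such lower-tolerance complexes---this is the step that produces the recursion defining $h(t,d)$ and also the step that loses sharpness relative to $\eta(d+1,t+1)-1$. Your link identity $\lk_{\tol{K}{t}}(v)=\tol{\lk_K(v)}{t}\cup\tol{K[V\setminus\{v\}]}{t-1}$ is morally the $|\sigma|=1$ shadow of Lemma~\ref{lemma:relative_decomposition}, but the paper needs the full $|\sigma|=d$ version because a $d$-collapsible complex need not have a free vertex (exactly the obstruction you noted).

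In short: your proposal is not a proof of the conjecture, nor does it claim to be; the gaps you isolate (no Mayer--Vietoris for collapsibility, and the arithmetic of $\eta(r,t)$) are genuine and are precisely why the statement remains a conjecture in the paper.
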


Let $t\geq 1$, and let $A$ and $B$ be two disjoint sets of size $t+1$ each. Let $K$ be the simplicial complex on vertex set $A\cup B$ whose maximal faces are the sets $A$ and $B$. It is easy to check that $K$ is $1$-collapsible, and therefore $1$-Leray (in fact, it is easy to show that it is even $1$-representable). On the other hand, the complex $\tol{K}{t}$ is the boundary of the simplex $A\cup B$. That is, $\tol{K}{t}$ is a $2t$-dimensional sphere. In particular, it is not $2t$-Leray. Therefore, for $d=1$, the bound $\eta(2,t+1)-1= 2t+1$ in Conjectures \ref{conj:tolerance_leray_to_leray} and \ref{conj:tolerance_coll_to_coll} cannot be improved.

For $t=1$, it was shown in \cite[Theorem 3.2]{MO11} that there exists a $d$-representable complex $K$ such that $\tol{K}{1}$ is the boundary of a $\left(\left\lfloor \left(\frac{d+3}{2}\right)^2\right\rfloor-1\right)$-dimensional simplex. In particular, $\tol{K}{1}$ is not $\left(\left\lfloor \left(\frac{d+3}{2}\right)^2\right\rfloor-2\right)$-Leray. Therefore, for $t=1$, the bound $\eta(d+1,2)-1=\left\lfloor \left(\frac{d+3}{2}\right)^2\right\rfloor-1$ in Conjectures \ref{conj:tolerance_leray_to_leray} and \ref{conj:tolerance_coll_to_coll} cannot be improved.

Our main result is the following:

\begin{theorem}\label{thm:main_tolerance}
Let $K$ be a $d$-collapsible complex, and let $t\geq 0$ be an integer. Then, $\tol{K}{t}$ is $h(t,d)$-Leray, where $h(0,d)=d$ for all $d\geq 0$, and for $t>0$,
\[
    h(t,d)= \left(\sum_{s=1}^{\min\{t,d\}} \binom{d}{s}(h(t-s,d)+1)\right) + d.
\]
\end{theorem}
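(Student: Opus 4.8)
The plan is to prove the statement by induction on $t$. The base case $t=0$ is immediate: $\tol{K}{0}=K$, and every $d$-collapsible complex is $d$-Leray by Wegner's theorem, so the claim holds with $h(0,d)=d$. For the inductive step, I fix $t>0$ and assume the statement is known for all smaller values of the tolerance parameter (for every $d$). To control the homology of induced subcomplexes of $\tol{K}{t}$, I would first reduce to controlling the homology of $\tol{K}{t}$ itself: note that $(\tol{K}{t})[U] = \tol{K[U]}{t}$ for any $U\subset V$, and that $K[U]$ is again $d$-collapsible; so it suffices to prove $\fhomology{i}{\tol{K}{t}}=0$ for all $i\geq h(t,d)$ under the assumption that $K$ is $d$-collapsible on vertex set $V$.

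The heart of the argument is a nerve-type / Mayer--Vietoris decomposition of $\tol{K}{t}$ driven by a single elementary $d$-collapse of $K$. Since $K$ is $d$-collapsible, pick a free face $\sigma$ of $K$ with $|\sigma|=s\leq d$, contained in a unique maximal face $\eta_0$, and write $K' = K\setminus \st_K(\sigma)$ for the complex obtained by the elementary collapse; then $K'$ is again $d$-collapsible, on the same vertex set, and $K = K' \cup \st_K(\sigma)$ with $K'\cap \st_K(\sigma) = \cost_K(\sigma) = \{\tau\in \st_K(\sigma): \sigma\not\subseteq\tau\}$ essentially the link structure. Passing to tolerance complexes, I would like to say $\tol{K}{t} = \tol{K'}{t} \cup \tol{\st_K(\sigma)}{t}$, and then run Mayer--Vietoris. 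The term $\tol{K'}{t}$ is handled by induction on the collapsibility number (an inner induction on the number of collapse steps), giving vanishing above $h(t,d)-1$. The term $\tol{\st_K(\sigma)}{t}$ should be contractible or nearly so, since $\st_K(\sigma)$ is a cone. The crucial and delicate piece is the intersection $\tol{K'}{t}\cap\tol{\st_K(\sigma)}{t}$: a simplex lies in it precisely when it is within tolerance $t$ of a face of $\st_K(\sigma)$ and also within tolerance $t$ of a face of $K'$; the point is that a simplex $\rho$ in this intersection that uses $\sigma$ in an essential way must ``spend'' part of its tolerance budget hitting $\sigma$, which means that modulo the at most $s$ vertices of $\sigma$, the rest of $\rho$ lies in a $t-s'$-tolerance complex of a $d$-collapsible complex (the link-type complex), for some $1\le s'\le s$. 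This is exactly where the recursion $h(t-s,d)$ in the statement comes from, and summing the contributions over which subset of $\sigma$ of size $s$ gets absorbed produces the $\sum_{s=1}^{\min\{t,d\}}\binom{d}{s}(h(t-s,d)+1)$ term, while the final $+d$ absorbs the $d$-Leray defect of the base-level pieces.

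Concretely, the intersection complex should be expressible as a union, over nonempty subsets $\sigma'\subseteq\sigma$, of complexes of the form (roughly) $\sigma' * \tol{L_{\sigma'}}{t-|\sigma'|}$ where $L_{\sigma'}$ is an induced subcomplex of the link $\lk_K(\sigma')$ (hence $d$-collapsible), joined with a simplex on $\sigma'$. Each such piece is, by the outer induction on $t$, $(h(t-|\sigma'|,d))$-Leray, so it has vanishing reduced homology above dimension $h(t-|\sigma'|,d)-1$ and, being a join with $\sigma'$, above dimension $h(t-|\sigma'|,d)-1$ still (the join with a simplex is contractible but we want the induced-subcomplex bound, so really we track Leray numbers throughout, not just homology of the whole complex). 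One then uses the standard fact that the Leray number of a union $K_1\cup K_2$ is controlled by $\max\{L(K_1), L(K_2), L(K_1\cap K_2)+1\}$, iterated over the $2^{|\sigma|}-1 \le \sum_{s=1}^{\min\{t,d\}}\binom{d}{s}$ pieces, each contributing its own Leray number plus one for each nesting; unwinding the bookkeeping yields exactly the recursive formula for $h(t,d)$.

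The main obstacle, and the step I expect to require the most care, is making the decomposition of the intersection $\tol{K'}{t}\cap\tol{\st_K(\sigma)}{t}$ precise and checking that each piece really is a tolerance complex of a $d$-collapsible complex with the tolerance parameter dropped by the claimed amount --- in particular, verifying that ``being within tolerance $t$ of a face of $\st_K(\sigma)$ but lying in $K'$ after deleting a size-$s$ subset of $\sigma$'' translates cleanly into membership in $\tol{(\text{something }d\text{-collapsible})}{t-s}$, and that the combinatorics of which vertices of $\sigma$ are absorbed matches the $\binom{d}{s}$ factor. A secondary technical point is handling the Leray-number (rather than just homology) bookkeeping uniformly, so that the induction closes over all induced subcomplexes simultaneously; this is why I would phrase the inductive hypothesis in terms of Leray numbers from the start rather than bare homology vanishing.
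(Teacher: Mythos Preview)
Your high-level architecture matches the paper's: double induction on $t$ and on the collapsing sequence, with the reduction to showing $\fhomology{k}{\tol{K}{t}}=0$ via $\tol{K}{t}[W]=\tol{K[W]}{t}$. The paper also analyzes one elementary $d$-collapse at a time and invokes the inductive hypothesis on $\tol{\cost(K,\sigma)}{t}$. However, the crucial technical step is handled quite differently, and your sketch of it does not go through as written.

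The paper does \emph{not} run Mayer--Vietoris on $\tol{K'}{t}\cup\tol{\st_K(\sigma)}{t}$. Instead it studies the relative homology of the pair $(\tol{K}{t},\tol{\cost(K,\sigma)}{t})$ directly. The key combinatorial observation (their Lemma~4.1) is that every simplex in $\tol{K}{t}\setminus\tol{\cost(K,\sigma)}{t}$ \emph{contains} $\sigma$; this immediately yields a dimension shift by $|\sigma|=d$ in the relative homology (their Lemma~3.2), which is precisely where the ``$+\,d$'' in the recursion comes from. After the shift one lands in $\tol{\lk(K,\sigma)}{t}$, which is a cone (hence contractible), so the relative group becomes $\fhomology{k-d-1}{\cdot}$ of an explicit subcomplex $Y$. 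That $Y$ is then split, not over subsets of $\sigma$, but over size-$t$ subsets $W\subset V\setminus(\sigma\cup U)$, using a nerve argument (their Lemma~3.1) showing that the pieces $Y_W$ have pairwise acyclic intersections; only \emph{then} is each $Y_W$ written as a union over $\sigma'\subset\sigma$ of lower-tolerance complexes and bounded via the Kalai--Meshulam inequality $L(\bigcup_i X_i)\le \sum_i(L(X_i)+1)-1$.

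Your proposed decomposition of the intersection $A\cap B=\tol{K'}{t}\cap\tol{\st_K(\sigma)}{t}$ as a union over nonempty $\sigma'\subseteq\sigma$ of joins $\sigma'\ast\tol{L_{\sigma'}}{t-|\sigma'|}$ is not correct: $A\cap B$ contains, for instance, the entire simplex $2^{\sigma\cup U}$ and many faces disjoint from $\sigma$, so it is not covered by pieces that each contain a nonempty $\sigma'$. More fundamentally, the $d$-fold dimension shift is invisible in your setup --- if your pieces had Leray number $h(t-|\sigma'|,d)$ and you summed via Kalai--Meshulam, you would bound $L(A\cap B)$ by $h(t,d)-d-1$, whereas the correct bound (forced by $H_j(A\cap B)\cong H_{j+1}(\tol{K}{t},\tol{K'}{t})$) is $h(t,d)-1$; this discrepancy signals that the decomposition is wrong. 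Finally, the union bound you state, $L(K_1\cup K_2)\le\max\{L(K_1),L(K_2),L(K_1\cap K_2)+1\}$, requires controlling the Leray numbers of all iterated intersections, which you have not done; the paper uses the additive Kalai--Meshulam bound instead, and that is what makes the recursion close.
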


Note that we require the stronger property (collapsibility) for $K$, and obtain only the weaker property (Leray) for the tolerance complex. For $d=1$, we obtain the tight bound $h(t,1)=2t+1=\eta(2,t+1)-1$.  For $d>1$, $h(t,d)$ is larger than the conjectural bound $\eta(d+1,t+1)-1$. However, when $t$ is fixed, we have $h(t,d)=O(d^{t+1})$, which is of the same order of magnitude as that of $\eta(d+1,t+1)-1$.

In the special case $d=2, t=1$, we can prove the following stronger bound:
\begin{theorem}\label{thm:main_tolerance_d2t1case}
Let $K$ be a $2$-collapsible complex. Then, $\tol{K}{1}$ is $5$-Leray.
\end{theorem}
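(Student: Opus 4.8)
The plan is to refine the general argument behind Theorem \ref{thm:main_tolerance} in the special case $d=2$, $t=1$, exploiting the fact that a $2$-collapsible complex has a very controlled structure of maximal faces. Applying the recursion of Theorem \ref{thm:main_tolerance} naively with $d=2$, $t=1$ gives $h(1,2) = \binom{2}{1}(h(0,2)+1) + 2 = 2\cdot 3 + 2 = 8$, so the content of Theorem \ref{thm:main_tolerance_d2t1case} is the improvement from $8$-Leray to $5$-Leray. Since Lerayness is inherited by induced subcomplexes, and $\tol{K}{1}[U] = \tol{K[U]}{1}$ for any $U \subseteq V$, it suffices to show $\tih_i(\tol{K}{1}) = 0$ for all $i \geq 5$, for every $2$-collapsible $K$.

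The main tool I would use is a Mayer--Vietoris / nerve-type decomposition of $\tol{K}{1}$ driven by a single elementary $2$-collapse of $K$. Write $\sigma$ for a free face of $K$ with $|\sigma| \le 2$, contained in the unique maximal face $\mu$, and let $K' = K \setminus \st(\sigma)$ be the result of the collapse; by induction on the number of faces of $K$ we may assume the theorem holds for $K'$. The faces of $\tol{K}{1}$ that are not in $\tol{K'}{1}$ are exactly the sets of the form $\eta \cup \tau$ where $\eta \in K$ contains $\sigma$ (hence $\eta \subseteq \mu$) and $|\tau| \le 1$. The key point is that the "new" part is controlled by the link of $\sigma$ inside $\st(\sigma)$, which is a simplex on $\mu \setminus \sigma$, together with the single extra vertex $\tau$ can supply; in the $|\sigma|=2$, $t=1$ regime one has to track how these pieces glue to $\tol{K'}{1}$ along an intersection complex, and the intersection is itself (a subcomplex closely related to) a tolerance complex of a $2$-collapsible complex, for which one can bound homology either by induction or by Theorem \ref{thm:tolerance_helly_general}. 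Chasing the resulting long exact sequence, the surviving homology of $\tol{K}{1}$ in degree $i$ is fed by homology of $\tol{K'}{1}$ in degrees $i$ and $i-1$, homology of the intersection complex in degree $i-1$, and homology of the "star" piece (which is typically a cone or a small sphere) in degree $i$. The numeric bookkeeping then has to be arranged so that, once $i \ge 5$, every contributing term vanishes.

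The crux — and the place where $d=2,t=1$ is genuinely easier than the general case — is a sharp estimate on the intersection complex $\tol{K'}{1} \cap (\text{new faces})$. Here one should get a bound of the form: this intersection has trivial homology above dimension $4$ (not merely above the $h(1,2)=8$ coming from the crude recursion). I expect the proof of this to go by analyzing that the intersection decomposes over the subsets $\tau$ of size $\le 1$: for $\tau = \varnothing$ it is $\cost_{K}(\sigma) = \{\rho \in K : \sigma \not\subseteq \rho\}$ intersected with $\st_K(\sigma)$'s closure, i.e. essentially $\partial\sigma * (\text{simplex on }\mu\setminus\sigma)$ which is contractible or a $0$-sphere join a simplex (hence homologically trivial in high degree); for $\tau = \{v\}$ one picks up one extra cone point and the dimension shifts by one. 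Combining these via another Mayer--Vietoris over the $O(|V|)$ choices of $v$, but using that they all share a common contractible core, should pin the intersection's homology to vanish in degrees $\ge 5$. This intersection estimate is the main obstacle: the general-case argument is wasteful precisely because it bounds the analogue of this intersection by iterating the full recursion, whereas here the geometry (a $1$-dimensional missing face being patched, inside a complex whose maximal faces have a $2$-dimensional "freeness" structure) lets one cut the loss roughly in half. Once that lemma is in hand, the long exact sequence argument closes the induction, yielding $\tih_i(\tol{K}{1})=0$ for $i \ge 5$.
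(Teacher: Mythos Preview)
Your high-level strategy matches the paper's exactly: run the inductive collapse argument behind Theorem~\ref{thm:main_tolerance} and sharpen the single step that is responsible for the loss. The paper says this in one line --- the proof is literally the $t=1$, $d=2$ case of Theorem~\ref{thm:main_tolerance}, with the Kalai--Meshulam bound (Theorem~\ref{thm:kalai_meshulam_leray}) replaced by a dedicated lemma (Lemma~\ref{lemma:union_of_links_d2t1}).

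However, there is a genuine gap in your proposal at exactly the point you flag as the ``crux''. After the reductions of Proposition~\ref{prop:main} (which is what your long-exact-sequence paragraph is re-deriving), the quantity that must vanish for $k\ge 5$ is, for each $w\in V\setminus(\sigma\cup U)$,
\[
\fhomology{k-3}{\lk(K,u)[U\cup\{w\}]\;\cup\;\lk(K,v)[U\cup\{w\}]},
\]
i.e.\ one needs triviality in degrees $\ge 2$, not merely ``above dimension $4$''. Your heuristic (``for $\tau=\{v\}$ one picks up one extra cone point and the dimension shifts by one'', combined over $O(|V|)$ choices sharing a contractible core) is precisely the kind of soft counting that Theorem~\ref{thm:kalai_meshulam_leray} already encodes, and it does not reach degree~$2$; that is why the general recursion only gives $8$. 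What actually closes the gap is a structural fact specific to $2$-collapsibility that your sketch does not contain: after one Mayer--Vietoris step on the union above, one is reduced to showing that
\[
Z \;=\; \lk(K,\{v,w\})[U]\;\cap\;\lk(K,\{u,w\})[U]
\]
is a \emph{complete} complex. The paper proves this by contradiction: a missing edge $\{x,y\}\subset U$ in $Z$ would force $\{x,y,w\}$ to be a missing face of $K$, and then the induced subcomplex $L=K[\{u,v,w,x,y\}]$ has exactly the two missing triangles $\{u,v,w\}$ and $\{x,y,w\}$, whence $\tilde{H}_2(L)\neq 0$, contradicting $2$-collapsibility (hence $2$-Lerayness) of $L$. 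This five-vertex obstruction is the missing idea; without it, the numeric bookkeeping you describe cannot be made to land at $5$.
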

Note that, since $5=\eta(3,2)-1$, the bound in Theorem \ref{thm:main_tolerance_d2t1case} is tight. 

\subsection{Organization}
The paper is organized as follows. In Section~\ref{sec:background} we present some basic facts about simplicial complexes, homology and collapsibility. In Section~\ref{sec:tolerance_homological} we prove some auxiliary topological results that we will use later. In Section~\ref{sec:tolerance_main} we prove our main result, Theorem \ref{thm:main_tolerance}. In Section~\ref{sec:tolerance_d2t1} we prove Theorem~\ref{thm:main_tolerance_d2t1case} about the Leray number of the $1$-tolerance complex of a $2$-collapsible complex. In Section \ref{sec:colorful} we present some applications to tolerant versions of the colorful Helly theorem.

\section{Background}\label{sec:background}

In this section we recall some basic definitions and results about simplicial complexes, homology and collapsibility. 

\subsection{Simplicial complexes}

Let $V$ be a finite set and let $K\subset 2^V$ be a family of sets. $K$ is called a \emph{simplicial complex} if $\sigma\in K$ for all $\tau\in K$ and $\sigma\subset \tau$. The set $V$ is called the \emph{vertex set} of $K$. A set $\sigma\in K$ is called a {\em simplex} or a {\em face} of $K$. The \emph{dimension} of a simplex $\sigma\in K$ is $\dim(\sigma)=|\sigma|-1$. For short, we call a $k$-dimensional simplex a \emph{$k$-simplex}. Let $K(k)$ be the set of all $k$-simplices.

The \emph{dimension} of the complex $K$, denoted by $\dim(K)$, is the maximal dimension of a simplex in $K$.

$K'$ is a \emph{subcomplex} of $K$ if it is a simplicial complex, and each simplex of $K'$ is also a simplex of $K$.

Let $U\subset V$. The subcomplex of $K$ \emph{induced} by $U$ is the complex
\[
K[U]=\{\sigma\in K:\, \sigma\subset U\}.
\]
Let $\tau\in K$. We define the \emph{link} of $\tau$ in $K$ to be the subcomplex
\[
\lk(K,\tau)= \{\sigma\in K: \, \sigma\cap\tau=\varnothing,\, \sigma\cup\tau \in K\},
\] 
the \emph{star} of $\tau$ in $K$ to be the subcomplex
\[
\st(X,\tau)= \{\sigma\in K:\, \sigma\cup\tau\in K\}
\]
and the \emph{costar} of $\tau$ in $K$ to be the subcomplex
\[
\cost(K,\tau) = \{\sigma\in K:\, \tau \not\subset\sigma\}.
\]

If $\tau=\{v\}$, we write $\lk(K,v)=\lk(K,\{v\})$, $\st(K,v)=\st(K,\{v\})$ and $K\setminus v= \cost(K,\{v\})=K[V\setminus \{v\}]$.

Let $X,Y$ be simplicial complexes on disjoint vertex sets. 
We define the \emph{join} of $X$ and $Y$ to be the simplicial complex
\[
X \ast Y= \{ \sigma\cup \tau:\,  \sigma\in X, \tau\in Y\}.
\]

Let $v\in V$. If $v\in \tau$ for every maximal face $\tau\in K$ we say that $K$ is a \emph{cone over $v$}.

For $U\subset V$, we denote by $2^U=\{\sigma:\, \sigma\subset U\}$ the \emph{complete complex} on vertex set $U$.

\subsection{Simplicial homology}

Let $K$ be a simplicial complex. Let $\fhomology{k}{K}$ be the $k$-th reduced homology group of $K$ with coefficients in $\Rea$.
We say that $K$ is \emph{acyclic} if $\fhomology{k}{K}=0$ for all $k\geq -1$.

A useful tool for computing homology is the Mayer-Vietoris long exact sequence:

\begin{theorem}[Mayer-Vietoris]\label{thm:mayer_vietoris}
Let $X,Y$ be simplicial complexes. Then, the following sequence is exact
\[
\cdots\to \rhomology{k}{X\cap Y} \to \rhomology{k}{X}\bigoplus\rhomology{k}{Y} \to \rhomology{k}{X\cup Y}\to \rhomology{k-1}{X\cap Y}\to \cdots
\]
\end{theorem}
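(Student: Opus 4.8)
The plan is to prove the Mayer--Vietoris long exact sequence for simplicial complexes by reducing it to the standard short exact sequence of chain complexes. First I would set up the simplicial chain complexes $C_\bullet(X)$, $C_\bullet(Y)$, $C_\bullet(X\cap Y)$, and $C_\bullet(X\cup Y)$ with coefficients in $\Rea$, using the augmented (reduced) versions so that the conclusion lands in reduced homology $\tih_\bullet$ as stated. The key algebraic input is the short exact sequence of chain complexes
\[
0 \to C_\bullet(X\cap Y) \xrightarrow{\ \alpha\ } C_\bullet(X)\oplus C_\bullet(Y) \xrightarrow{\ \beta\ } C_\bullet(X\cup Y) \to 0,
\]
where $\alpha(c) = (c,c)$ (viewing a chain on $X\cap Y$ inside each of $X$ and $Y$) and $\beta(a,b) = a - b$ (viewing chains on $X$ and $Y$ inside $X\cup Y$). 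I would then invoke the standard algebraic fact that a short exact sequence of chain complexes induces a long exact sequence in homology, with the connecting homomorphism $\partial$ supplying the maps $\tih_k(X\cup Y)\to \tih_{k-1}(X\cap Y)$.

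The main content to verify is exactness of this short sequence in each simplicial degree $k$. Here I would use the crucial feature of the \emph{simplicial} (as opposed to singular) setting: for each $k$, the chain group $C_k(-)$ has a basis indexed by the $k$-faces, and the faces of $X\cup Y$ are exactly the faces of $X$ together with the faces of $Y$, while the faces of $X\cap Y$ are exactly those lying in both. Concretely, $\beta$ is surjective because every oriented $k$-simplex of $X\cup Y$ lies in $X$ or in $Y$, hence is hit; $\alpha$ is injective since $c\mapsto (c,c)$ clearly has trivial kernel; and $\operatorname{im}\alpha = \ker\beta$ follows by comparing basis elements: a pair $(a,b)$ with $a - b = 0$ in $C_k(X\cup Y)$ forces $a$ and $b$ to agree coefficient-by-coefficient on the faces common to $X$ and $Y$, i.e.\ on faces of $X\cap Y$, and to vanish elsewhere, so $(a,b)$ is of the form $(c,c)$ with $c\in C_k(X\cap Y)$. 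I would also note that $\alpha$ and $\beta$ are chain maps, which is immediate because all the inclusion-induced maps commute with the simplicial boundary operator.

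I expect the one genuinely delicate point to be the bookkeeping around \emph{reduced} homology and signs. To obtain $\tih_\bullet$ rather than unreduced $H_\bullet$, one augments each chain complex with the ground ring in degree $-1$; I would check that $\alpha$ and $\beta$ extend compatibly to the augmentation (the augmentation map $C_0\to \Rea$ intertwines with $\alpha,\beta$ via $\varepsilon\oplus\varepsilon$ and the subtraction map), so that the short exact sequence holds including the degree $-1$ term and the long exact sequence continues correctly into low degrees. The sign conventions in $\beta(a,b) = a-b$ are chosen precisely so that $\beta\circ\alpha = 0$ and so that the connecting map is well defined; any consistent choice works, and I would fix one. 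Once exactness of the short sequence of (augmented) chain complexes is established degreewise, the desired long exact sequence is a formal consequence of the zig-zag (snake) lemma, completing the proof.
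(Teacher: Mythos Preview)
Your proposal is correct and is essentially the standard proof of the simplicial Mayer--Vietoris sequence via the short exact sequence of augmented chain complexes and the snake lemma. Note, however, that the paper does not actually prove this theorem: it is stated without proof as a classical background result in Section~\ref{sec:background}, so there is no ``paper's own proof'' to compare against. Your argument is exactly the textbook derivation one would expect, and the care you take with the augmented complexes to obtain reduced homology is appropriate.
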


The following special case will be of use later:

\begin{theorem}\label{thm:exact_sequence_link_costar}
Let $K$ be a simplicial complex on vertex set $V$, and let $v\in V$. Then, the following sequence is exact
\[
\cdots\to \rhomology{k}{\lk(K,v)} \to \rhomology{k}{K\setminus v}\to \rhomology{k}{K}\to \rhomology{k-1}{\lk(K,v)}\to \cdots
\]
\end{theorem}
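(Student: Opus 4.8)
The plan is to obtain this long exact sequence directly from the Mayer--Vietoris sequence (Theorem~\ref{thm:mayer_vietoris}) applied to the decomposition $K = X \cup Y$ with $X = \st(K,v)$ and $Y = \cost(K,v) = K\setminus v$.

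First I would check the two set-theoretic identities $X \cup Y = K$ and $X \cap Y = \lk(K,v)$. For the union: every face $\sigma \in K$ either avoids $v$, and then $\sigma \in \cost(K,v)$, or contains $v$, and then $\sigma \cup \{v\} = \sigma \in K$, so $\sigma \in \st(K,v)$; the reverse inclusion $X \cup Y \subset K$ is immediate since both $X$ and $Y$ are subcomplexes of $K$. For the intersection: $\sigma \in \st(K,v) \cap \cost(K,v)$ says exactly that $v \notin \sigma$ and $\sigma \cup \{v\} \in K$, which is the defining condition for membership in $\lk(K,v)$.

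Next I would record that $\st(K,v)$ is a cone over $v$, hence acyclic. Indeed, if $\sigma$ is a maximal face of $\st(K,v)$ then $\sigma \cup \{v\} \in K$, so $\sigma \cup \{v\} \in \st(K,v)$, and maximality of $\sigma$ forces $v \in \sigma$; a cone is contractible, so $\rhomology{k}{\st(K,v)} = 0$ for all $k$. Substituting $X$, $Y$, $X \cup Y = K$ and $X \cap Y = \lk(K,v)$ into the Mayer--Vietoris sequence, the direct summand $\rhomology{k}{\st(K,v)}$ vanishes, the middle term collapses to $\rhomology{k}{K\setminus v}$, and we recover precisely the asserted exact sequence.

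I do not anticipate a genuine obstacle here: the argument is a routine application of Mayer--Vietoris, and the only points needing care are the two combinatorial identities above and the standard fact that a cone is acyclic.
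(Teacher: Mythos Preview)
Your proposal is correct and follows exactly the same approach as the paper: decompose $K$ as $\st(K,v)\cup (K\setminus v)$, identify the intersection as $\lk(K,v)$, and use that the star is a cone to kill its summand in the Mayer--Vietoris sequence. The only difference is that you spell out the combinatorial identities in slightly more detail than the paper does.
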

\begin{proof}
Let $A=K\setminus v$ and $B=\st(K,v)$.
By Theorem \ref{thm:mayer_vietoris}, we have a long exact sequence
\[
\cdots\to \rhomology{k}{A\cap B} \to \rhomology{k}{A}\bigoplus\rhomology{k}{B} \to \rhomology{k}{A\cup B}\to \rhomology{k-1}{A\cap B}\to \cdots
\]
Note that $B$ is a cone over $v$, and therefore $\rhomology{k}{B}=0$ for all $k$. Moreover, $A\cup B=K$ and $A\cap B=\lk(K,v)$.  So, we obtain a long exact sequence
\[
\cdots\to \rhomology{k}{\lk(K,v)} \to \rhomology{k}{K\setminus v}\to \rhomology{k}{K}\to \rhomology{k-1}{\lk(K,v)}\to \cdots
\]
\end{proof}

Another useful way of computing homology is by the application of nerve theorems.
Let $X_1,\ldots,X_m$ be simplicial complexes. The \emph{nerve} of the family $\{X_1,\ldots,X_m\}$ is the simplicial complex
\[
N(\{X_1,\ldots,X_m\})= \left\{ I\subset[m]:\, \bigcap_{i\in I} X_i\neq \{\varnothing\}\right\},
\]
where $[m]:=\{1,2,\ldots,m\}$. Roughly speaking, given a family of simplicial complexes, a nerve theorem describes how much the topology of the nerve of the family reflects the topology of the union of the complexes, when every non-empty intersection of the complexes satisfies certain topological restrictions (see e.g. \cite[Theorem 6]{Bjor03} or \cite[Theorem 2.1]{Mes01}).
Here, we will use the following simple version of the nerve theorem:
\begin{theorem}[Leray's Nerve Theorem]\label{thm:nerve_theorem}
Let $X_1,\ldots,X_m$ be simplicial complexes, and let $X=\bigcup_{i=1}^m X_i$. If, for every $I\subset [m]$, $\bigcap_{i\in I} X_i$ is either empty or acyclic, then
\[
	\fhomology{k}{X}\cong \fhomology{k}{N(\{X_1,\ldots,X_m\})}
\]
for all $k\geq -1$.
\end{theorem}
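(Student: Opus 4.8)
The plan is to deduce the statement from a single collapse of the Mayer--Vietoris spectral sequence of the cover $\{X_1,\dots,X_m\}$ of $X$, using the long exact sequences already at hand as the elementary building blocks. Concretely, I would form the first-quadrant double complex
\[
C_{p,q}=\bigoplus_{\substack{I\subseteq[m]\\ |I|=p+1}} C_q\!\left(\bigcap_{i\in I}X_i\right),
\]
whose vertical differential is the simplicial boundary and whose horizontal differential is the alternating sum of the maps induced by the inclusions $\bigcap_{i\in I}X_i\hookrightarrow\bigcap_{i\in I\setminus\{j\}}X_i$. The first key step is to show that the total complex of $C_{\bullet,\bullet}$ computes $H_*(X)$; this is the only place where Theorem~\ref{thm:mayer_vietoris} enters, since fixing a simplex $\sigma\in X$ and recording which intersections contain $\sigma$ produces, in each internal degree, the augmented chain complex of a simplex (on the index set $\{i:\sigma\in X_i\}$), which is acyclic. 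This is exactly the assertion that iteratively applying Mayer--Vietoris reglues the cover into $X$.

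The second step computes the $E^1$-page by taking vertical homology first. By hypothesis each nonempty intersection $\bigcap_{i\in I}X_i$ is acyclic, so $H_q(\bigcap_{i\in I}X_i)=0$ for $q\ge 1$, while $H_0=\Rea$ precisely when $I$ is a face of $N:=N(\{X_1,\dots,X_m\})$ and every intersection equal to $\{\varnothing\}$ contributes nothing. Hence $E^1_{p,q}=0$ for $q\ge 1$, and $E^1_{p,0}$ is the simplicial $p$-chain group of $N$, with $d^1$ equal to the simplicial boundary operator of $N$. The spectral sequence is thus concentrated in a single row and collapses at $E^2$, yielding $H_n(X)\cong H_n(N)$ for all $n$. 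The passage to reduced homology in all degrees $k\ge-1$ is then routine degree-$0$ and degree-$(-1)$ bookkeeping, using that $X$ has a vertex if and only if some $X_i\neq\{\varnothing\}$, i.e.\ if and only if $N$ has a vertex.

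If one prefers to stay strictly within the stated tools and avoid spectral sequences, the same result follows by induction on $m$. Writing $A=\bigcup_{i<m}X_i$ and $B=X_m$, one has $X=A\cup B$ and $A\cap B=\bigcup_{i<m}(X_i\cap X_m)$. The covers $\{X_i\}_{i<m}$ of $A$ and $\{X_i\cap X_m\}_{i<m}$ of $A\cap B$ both inherit the hypothesis, since $\bigcap_{i\in I}(X_i\cap X_m)=\bigcap_{i\in I\cup\{m\}}X_i$, and their nerves are exactly $N\setminus m$ and $\lk(N,m)$. When $B=\{\varnothing\}$ the vertex $m$ lies in no face of $N$, $X=A$, $N=N\setminus m$, and the claim is immediate. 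When $B$ is acyclic, $\rhomology{*}{B}=0$, so the Mayer--Vietoris sequence of $(A,B)$ collapses to a long exact sequence in the terms $\rhomology{k}{A\cap B}$, $\rhomology{k}{A}$, $\rhomology{k}{X}$ that has the same shape as the link--costar sequence of $N$ at the vertex $m$ furnished by Theorem~\ref{thm:exact_sequence_link_costar}, and the inductive isomorphisms identify the first two columns.

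The main obstacle is the final step of promoting these two parallel long exact sequences to an isomorphism on the middle term: one cannot simply invoke the five lemma, because that requires a comparison map already defined on $\rhomology{*}{X}$ making the entire ladder commute. I would resolve this by strengthening the inductive hypothesis to produce not merely an abstract isomorphism but a specific comparison morphism natural with respect to morphisms of covers (pairs $(f,\pi)$ with $f(X_i)\subseteq Y_{\pi(i)}$), so that the inclusion-of-covers $\{X_i\cap X_m\}_{i<m}\hookrightarrow\{X_i\}_{i<m}$ forces the left square of the ladder to commute and the connecting homomorphisms to correspond; the five lemma then applies. This naturality is precisely what the spectral-sequence formulation (equivalently, the Mayer--Vietoris blowup of the cover) supplies automatically, which is why I would present the spectral-sequence argument as the main line and treat the inductive argument as an elementary alternative.
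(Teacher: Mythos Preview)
The paper does not prove this theorem. Leray's Nerve Theorem is quoted in the background section as a known result, with pointers to the literature (Bj\"orner; Meshulam), and is then used as a black box, notably in the proof of Lemma~\ref{lemma:nerve_contractible_intersections}. There is no argument in the paper to compare your proposal against.

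That said, your proposal is the standard proof and is correct. The double complex you write down is the \v{C}ech--simplicial bicomplex of the cover; filtering one way and using that $\{i:\sigma\in X_i\}$ is a nonempty simplex for every $\sigma\in X$ shows the total complex computes $H_*(X)$, while filtering the other way and using acyclicity of the nonempty intersections collapses the vertical homology to the row $q=0$, which is exactly $C_*(N)$ with its simplicial boundary. The reduced-homology bookkeeping in degrees $0$ and $-1$ is routine, as you note. Your inductive alternative is also sound, and you have correctly identified its one genuine difficulty: the five lemma needs a commuting ladder, so one must either construct the comparison map in advance (via the blowup of the cover or an acyclic-carrier argument) or carry naturality with respect to morphisms of covers through the induction. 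That is precisely why the spectral-sequence route is the cleaner presentation.
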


For the union of simplicial complexes, the Leray number can be bounded by the following result by Kalai and Meshulam.

\begin{theorem}[Kalai and Meshulam~\cite{KM06}]\label{thm:kalai_meshulam_leray}
Let $X = \bigcup_{i=1}^{r} X_i$.
Then, \[L(X) \leq \sum_{i=1}^{r}(L(X_i)+1)-1.\]
\end{theorem}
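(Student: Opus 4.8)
The plan is to reduce the bound to a statement about \emph{intersections} and then to settle that statement algebraically. Write $d_i=L(X_i)$ and $D=\sum_{i=1}^r(d_i+1)-1$. Since $X[U]=\bigcup_{i=1}^r X_i[U]$ for every $U\subset V$, and an induced subcomplex of a $d$-Leray complex is again $d$-Leray, it is enough to prove that $\tih_k(X)=0$ for all $k\geq D$; running the same argument on each induced subcomplex then yields $L(X)\le D$.

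The key reduction is the intersection bound
\[
L(X\cap Y)\le L(X)+L(Y),
\]
which I shall call Lemma~A. Iterating it gives $L(\bigcap_{i\in I}X_i)\le\sum_{i\in I}d_i$ for every $I\subseteq[r]$. Feeding these into the Mayer--Vietoris spectral sequence of the cover $\{X_i\}_{i=1}^r$, whose first page is $E_1^{p,q}=\bigoplus_{|I|=p+1}\tih_q(\bigcap_{i\in I}X_i)$ and which converges to $\tih_{p+q}(X)$, the multi-intersection bound forces $E_1^{p,q}=0$ unless $q\le(\sum_{i\in I}d_i)-1$ for some $I$ with $|I|=p+1$; since $p\le r-1$, any surviving entry has $p+q\le\sum_i d_i+r-2=D-1$. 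Hence $\tih_k(X)=0$ for $k\ge D$. (When $r=2$ this is just one application of Theorem~\ref{thm:mayer_vietoris} to $X_1[U]\cup X_2[U]$: the end terms vanish for $k\ge\max\{d_1,d_2\}$ and, by Lemma~A, $\tih_{k-1}((X_1\cap X_2)[U])=0$ for $k-1\ge d_1+d_2$, so $\tih_k((X_1\cup X_2)[U])=0$ for $k\ge d_1+d_2+1$.)

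The whole difficulty is thus concentrated in Lemma~A, and this is where I expect the main obstacle. A direct topological induction on $|V|$ is natural: restricting to $U\subsetneq V$ is handled by the inductive hypothesis, and for $U=V$ one applies Theorem~\ref{thm:exact_sequence_link_costar} to $X\cap Y$, using $\lk(X\cap Y,v)=\lk(X,v)\cap\lk(Y,v)$ together with the easy fact $L(\lk(K,v))\le L(K)$ (immediate from Theorem~\ref{thm:exact_sequence_link_costar}). This argument, however, fails by exactly one degree at the critical dimension $k=d_1+d_2$: the connecting map places $\tih_k(X\cap Y)$ inside $\tih_{k-1}$ of a link, whose homology there is not controlled by its Leray number. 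Escaping this off-by-one requires recording, in addition to the dimensions in which induced subcomplexes carry homology, the \emph{sizes} of the vertex sets that carry it.

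The right bookkeeping device is Castelnuovo--Mumford regularity. Over $\Rea$, with $S=\Rea[x_v:v\in V]$ and $I_K\subset S$ the Stanley--Reisner ideal of $K$, Hochster's formula expresses the graded Betti numbers of $S/I_K$ through the reduced homology of induced subcomplexes and gives $L(K)=\operatorname{reg}(S/I_K)$. A direct check on monomials shows $I_{X\cap Y}=I_X+I_Y$, so $S/I_{X\cap Y}\cong(S/I_X)\otimes_S(S/I_Y)$, and Lemma~A becomes the subadditivity of regularity under tensor product, $\operatorname{reg}((S/I_X)\otimes_S(S/I_Y))\le\operatorname{reg}(S/I_X)+\operatorname{reg}(S/I_Y)$. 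I would prove this from the estimates $\operatorname{reg}(\operatorname{Tor}_i^S(S/I_X,S/I_Y))\le\operatorname{reg}(S/I_X)+\operatorname{reg}(S/I_Y)-i$ and a spectral-sequence (or iterated mapping-cone) argument bounding the regularity of $\operatorname{Tor}_0$ in terms of the higher $\operatorname{Tor}$'s. Controlling those higher $\operatorname{Tor}$ modules, which could a priori inflate the regularity, is the technical heart of the proof; everything else is the formal reduction described above.
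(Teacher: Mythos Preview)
The paper does not prove this theorem; it is quoted as a result of Kalai and Meshulam \cite{KM06} and used as a black box. There is therefore no in-paper argument to compare against.

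For what it is worth, your outline is essentially the route taken in the original source: the intersection inequality $L(X\cap Y)\le L(X)+L(Y)$ (your Lemma~A) is indeed the core of \cite{KM06}, and the translation to Castelnuovo--Mumford regularity via Hochster's formula together with $I_{X\cap Y}=I_X+I_Y$ is exactly how they reduce it to commutative algebra. Your spectral-sequence step from Lemma~A to the union bound is correct; the cleanest bookkeeping is $p+q\le (|I|-1)+\bigl(\sum_{i\in I}d_i-1\bigr)=\sum_{i\in I}(d_i+1)-2\le D-1$ for each contributing $I$. You are also candid that the subadditivity-of-regularity step is where the real work lies and that you have not actually carried it out; that step is precisely what \cite{KM06} supplies. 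So your proposal is a faithful sketch of the known proof, not an alternative, and it is incomplete at the same point where the genuine difficulty sits --- which is fine here, since the present paper never intended to reprove the result.
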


\subsubsection{Relative homology}

Let $X$ be a simplicial complex and let $Y$ be a subcomplex of $X$. Let $C_k(X,Y)$ be the $\Rea$-vector space generated by the ordered $k$-simplices in $X\setminus Y$, under the relations
\[
[v_0,\ldots,v_k]=\text{sgn}(\pi) [v_{\pi(0)},\ldots,v_{\pi(k)}],
\]
for every $k$-simplex $\{v_0,\ldots,v_k\}\in X\setminus Y$ and permutation $\pi:\{0,\ldots,k\}\to \{0,\ldots,k\}$.
We define a linear map $\partial_k: C_k(X,Y)\to C_{k-1}(X,Y)$ that acts on the spanning set by
\[
    \partial_k[v_0,\ldots,v_k]= \sum_{\substack{i\in\{0,\ldots,k\}:\\ \{v_0,\ldots,v_{i-1},v_{i+1},\ldots,v_k\}\notin Y}} (-1)^i [v_0,\ldots,v_{i-1},v_{i+1},\ldots,v_k].
\]
We define the group of \emph{$k$-cycles} as $Z_k(X,Y)=\text{Ker}(\partial_k)$ and the group of \emph{$k$-boundaries} as $B_k(X,Y)=\text{Im}(\partial_{k+1})$. For any $k$, we have $B_k(X,Y)\subset Z_k(X,Y)$, so we can define the quotient
\[
\relhomology{k}{X,Y}=Z_k(X,Y)/B_k(X,Y).
\]
We call $\relhomology{k}{X,Y}$ the $k$-th relative homology group of the pair $Y \subset X$.
The relative homology of the pair $Y\subset X$ is related to the homology of the two complexes via the following result:

\begin{theorem}[Long exact sequence of a pair]\label{thm:exact_sequence_pair}
Let $Y\subset X$ be simplicial complexes. Then, the following sequence is exact:
\[
\cdots\to\fhomology{k}{Y}\to \fhomology{k}{X}\to \relhomology{k}{X,Y}\to \fhomology{k-1}{Y}\to\cdots
\]
\end{theorem}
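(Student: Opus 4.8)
The plan is to obtain the long exact sequence as the homology long exact sequence associated, via the zig-zag (snake) lemma, to a short exact sequence of chain complexes. Let $\tilde C_*(Y)$ and $\tilde C_*(X)$ denote the augmented simplicial chain complexes of $Y$ and $X$ over $\Rea$, i.e.\ those computing the reduced homology groups $\tih_*$ (so $\tilde C_{-1}(Y)=\tilde C_{-1}(X)=\Rea$, with augmentation sending each vertex to $1$), and extend the relative complex of the pair by setting $C_{-1}(X,Y)=0$; this covers the case $Y\neq\varnothing$, which is the relevant one, the case $Y=\varnothing$ being the triviality $\relhomology{k}{X,\varnothing}\cong\tih_k(X)$. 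For each $k\geq -1$, consider the sequence of linear maps
\[
0 \longrightarrow \tilde C_k(Y) \xrightarrow{\ \iota_k\ } \tilde C_k(X) \xrightarrow{\ \pi_k\ } C_k(X,Y) \longrightarrow 0,
\]
where $\iota_k$ is induced by the inclusion $Y\subset X$, and $\pi_k$ sends an ordered $k$-simplex $[v_0,\dots,v_k]$ of $X$ to itself if $\{v_0,\dots,v_k\}\in X\setminus Y$ and to $0$ otherwise.

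First I would verify that this is a short exact sequence of chain complexes. Injectivity of $\iota_k$ and surjectivity of $\pi_k$ are immediate from the definitions, and exactness in the middle holds because $\mathrm{Ker}\,\pi_k$ is precisely the span of the ordered $k$-simplices whose vertex set lies in $Y$, i.e.\ exactly $\mathrm{Im}\,\iota_k$; in degree $-1$ this is trivial, since $\iota_{-1}=\id_\Rea$ and $C_{-1}(X,Y)=0$. That $\iota$ commutes with the boundary maps is clear. The one point requiring explicit checking is that $\pi$ is a chain map, i.e.\ that the face-deleting boundary operator $\partial_k$ that the paper defines on $C_*(X,Y)$ coincides with the boundary induced on the quotient $\tilde C_*(X)/\tilde C_*(Y)$: applying $\pi_{k-1}$ to the ordinary boundary $\partial_k^X[v_0,\dots,v_k]$ annihilates exactly those terms $[v_0,\dots,v_{i-1},v_{i+1},\dots,v_k]$ whose vertex set belongs to $Y$, and this is precisely the effect of the restricted summation in the paper's definition of $\partial_k$ on $C_*(X,Y)$; hence $\pi_{k-1}\circ\partial_k^X=\partial_k^{X,Y}\circ\pi_k$ on the spanning set, and therefore everywhere.

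Then I would invoke the zig-zag lemma: a short exact sequence $0\to A_*\to B_*\to C_*\to 0$ of chain complexes induces a long exact sequence $\cdots\to H_k(A_*)\to H_k(B_*)\to H_k(C_*)\xrightarrow{\ \partial\ } H_{k-1}(A_*)\to\cdots$, where the connecting homomorphism is given by the usual diagram chase: lift a relative cycle $z\in Z_k(X,Y)$ to a chain $\tilde z\in\tilde C_k(X)$, observe that $\pi_{k-1}(\partial_k\tilde z)=\partial_k(\pi_k\tilde z)=\partial_k z=0$ so that $\partial_k\tilde z=\iota_{k-1}(w)$ for a unique $w\in\tilde C_{k-1}(Y)$, check that $w$ is a cycle, and set $\partial[z]=[w]$. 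Taking $A_*=\tilde C_*(Y)$, $B_*=\tilde C_*(X)$, $C_*=C_*(X,Y)$, and recalling that by definition the homology of these complexes is $\tih_k(Y)$, $\tih_k(X)$ and $\relhomology{k}{X,Y}$ respectively, this reads
\[
\cdots\to\fhomology{k}{Y}\to \fhomology{k}{X}\to \relhomology{k}{X,Y}\to \fhomology{k-1}{Y}\to\cdots,
\]
as claimed. I expect no genuine obstacle: the content is entirely bookkeeping, and the only mildly delicate step is the chain-map verification for $\pi$, which reconciles the paper's slightly nonstandard (face-deleting) relative boundary operator with the textbook quotient complex, so that the zig-zag lemma applies verbatim. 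Alternatively, one may avoid citing the zig-zag lemma and instead establish exactness directly at each of the three spots by the corresponding three diagram chases, which is equally routine.
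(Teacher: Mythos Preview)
Your argument is the standard textbook derivation via the short exact sequence of augmented chain complexes and the zig-zag lemma, and it is correct; in particular, your check that the paper's face-deleting relative boundary agrees with the quotient boundary is the right point to verify. The paper itself does not supply a proof of this theorem at all---it is stated in the background section as a well-known fact---so there is nothing to compare against, and your proposal simply fills in what the paper takes for granted.
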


\subsection{Collapsibility}

We will need the following properties, showing that $d$-collapsibility is  ``hereditary":

\begin{lemma}[Wegner \cite{Weg75}]\label{lemma:hereditary}
Let $K$ be a $d$-collapsible complex on vertex set $V$, and let $U\subset V$. Then, $K[U]$ is also $d$-collapsible.
\end{lemma}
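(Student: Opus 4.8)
The plan is to reduce to the case of deleting a single vertex and then transport a collapsing sequence of $K$ to one of $K\setminus v$. Since for $U\subseteq U'\subseteq V$ we have $(K[U'])[U]=K[U]$, and since $K[V\setminus\{v\}]=K\setminus v$, it suffices by induction on $|V\setminus U|$ to treat the case $U=V\setminus\{v\}$; that is, to show that $K\setminus v$ is $d$-collapsible whenever $K$ is.

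The heart of the matter is the following claim: if a single elementary $d$-collapse turns $K$ into $K'$, then $K\setminus v$ can be turned into $K'\setminus v$ by performing either zero or one elementary $d$-collapses. Granting this, fix a sequence of elementary $d$-collapses $K=K_0\to K_1\to\cdots\to K_N$ with $K_N$ the void complex, which exists because $K$ is $d$-collapsible. Applying the claim to each step $K_i\to K_{i+1}$ yields a sequence of elementary $d$-collapses leading from $K_0\setminus v=K\setminus v$ to $K_N\setminus v$, which is again the void complex. Hence $K\setminus v$ is $d$-collapsible.

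To prove the claim, let $\sigma$ be the free face of $K$, with $|\sigma|\le d$, along which the collapse is performed, and let $\tau$ be the unique maximal face of $K$ containing $\sigma$; thus $K'$ is obtained from $K$ by deleting exactly the faces $\rho$ with $\sigma\subseteq\rho\subseteq\tau$. Distinguish three cases according to the position of $v$. If $v\notin\tau$, then no deleted face contains $v$, so $K'\setminus v$ is $K\setminus v$ with those faces removed; here $\sigma$ is still a face of $K\setminus v$, and it remains free with the same unique maximal face $\tau$, since any face of $K\setminus v$ containing $\sigma$ lies in $K$ and hence inside $\tau$ by uniqueness, while $\tau$ avoids $v$ and so belongs to $K\setminus v$. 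Thus the deletion is exactly one elementary $d$-collapse of $K\setminus v$. If $v\in\sigma$, then every deleted face contains $v$, so $K\setminus v=K'\setminus v$ and no collapse is needed. Finally, if $v\in\tau\setminus\sigma$, then $v\notin\sigma$ and $\sigma\subseteq\tau\setminus\{v\}$; the faces avoiding $v$ among those deleted are precisely the $\rho$ with $\sigma\subseteq\rho\subseteq\tau\setminus\{v\}$; and, again using that $\tau$ is the unique maximal face of $K$ containing $\sigma$, one checks that $\tau\setminus\{v\}$ is the unique maximal face of $K\setminus v$ containing $\sigma$. So deleting from $K\setminus v$ all faces containing $\sigma$ is a single elementary $d$-collapse producing $K'\setminus v$.

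The only genuine content is the verification, in the first and third cases, that $\sigma$ stays a free face of $K\setminus v$ with the indicated unique maximal face; this is the step that needs care, but it reduces immediately to the uniqueness of $\tau$ in $K$. The reduction to one vertex and the composition of the individual steps are routine bookkeeping.
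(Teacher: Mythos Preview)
The paper does not give its own proof of this lemma; it simply cites Wegner \cite{Weg75}. Your argument is correct and is essentially the classical one: reduce to deleting a single vertex, and show that each elementary $d$-collapse of $K$ induces either zero or one elementary $d$-collapses of $K\setminus v$, according to whether $v$ lies in $\sigma$, in $\tau\setminus\sigma$, or outside $\tau$. The case analysis and the verification that $\sigma$ remains free in $K\setminus v$ (with the appropriate unique maximal face $\tau$ or $\tau\setminus\{v\}$) are handled correctly; note in particular that the paper's definition of ``free'' allows the free face to coincide with its unique maximal face, so the degenerate situation $\tau=\sigma\cup\{v\}$ in the third case causes no trouble.
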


\begin{lemma}[Khmelnitsky \cite{khmel}, see also {\cite[Prop 3.7]{kim2019complexes}}]
\label{lemma:coll_of_link}
Let $K$ be a $d$-collapsible complex, and let $\sigma\in K$. Then, $\lk(K,\sigma)$ is also $d$-collapsible.
\end{lemma}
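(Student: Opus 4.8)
The plan is to run a collapsing sequence of $K$ and track how each elementary collapse acts on the link $\lk(K,\sigma)$. Fix a sequence of elementary $d$-collapses $K=K_0,K_1,\dots,K_N$ reducing $K$ to the void complex, and aim to show that for every $i$ the step $K_i\to K_{i+1}$ either leaves $\lk(K_i,\sigma)$ unchanged or performs a single elementary $d$-collapse on it whose result is $\lk(K_{i+1},\sigma)$. Granting this, the non-trivial steps form a sequence of elementary $d$-collapses from $\lk(K,\sigma)=\lk(K_0,\sigma)$ to $\lk(K_N,\sigma)$, which is again the void complex; hence $\lk(K,\sigma)$ is $d$-collapsible.

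To carry this out, suppose $K_{i+1}$ is obtained from $K_i$ by deleting all faces $\rho$ with $\tau\subseteq\rho\subseteq\eta$, where $\tau$ is a free face of $K_i$ with $|\tau|\leq d$ and $\eta$ is the unique maximal face of $K_i$ containing $\tau$. Since $K_{i+1}\subseteq K_i$ we have $\lk(K_{i+1},\sigma)\subseteq\lk(K_i,\sigma)$, and a face $\rho$ of the link (automatically disjoint from $\sigma$) lies in the difference precisely when $\tau\subseteq\rho\cup\sigma\subseteq\eta$. If $\sigma\not\subseteq\eta$ this is impossible, so the link is unchanged. If $\sigma\subseteq\eta$, put $\tau'=\tau\setminus\sigma$ and $\eta'=\eta\setminus\sigma$; then the faces deleted from $\lk(K_i,\sigma)$ are exactly those $\rho$ with $\tau'\subseteq\rho\subseteq\eta'$, and it remains to check that this is a legitimate elementary $d$-collapse of $\lk(K_i,\sigma)$. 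Indeed, $\eta'\in\lk(K_i,\sigma)$ since $\eta'\cup\sigma=\eta\in K_i$; $\eta'$ is the unique maximal face of $\lk(K_i,\sigma)$ containing $\tau'$, because any maximal face $\mu$ of the link with $\tau'\subseteq\mu$ satisfies $\mu\cup\sigma\in K_i$ and $\tau\subseteq\mu\cup\sigma$, hence $\mu\cup\sigma\subseteq\eta$ and $\mu\subseteq\eta'$, so $\mu=\eta'$; and $|\tau'|\leq|\tau|\leq d$.

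I do not anticipate a genuine obstacle beyond this bookkeeping. The one spot requiring care is the last computation: checking that $\tau'=\tau\setminus\sigma$ is still free in $\lk(K_i,\sigma)$ with $\eta'$ as its unique maximal face, which is exactly where the uniqueness of $\eta$ in $K_i$ enters; note that passing from $\tau$ to $\tau\setminus\sigma$ can only shrink the face, so the bound $|\tau|\leq d$ is never violated and $d$-collapsibility is preserved throughout. (Alternatively, one may first treat the single-vertex case $\sigma=\{v\}$ and then deduce the general case by induction on $|\sigma|$ via the identity $\lk(K,\{v_1,\dots,v_k\})=\lk\bigl(\lk(K,v_1),\{v_2,\dots,v_k\}\bigr)$.)
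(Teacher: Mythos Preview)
The paper does not give its own proof of this lemma; it is quoted from external references. Your argument is correct and coincides with the standard proof found there: run a $d$-collapsing sequence $K=K_0,\dots,K_N$ and check that each elementary collapse on a free face $\tau$ (with unique maximal face $\eta$) either leaves $\lk(K_i,\sigma)$ unchanged (when $\sigma\not\subseteq\eta$) or induces an elementary $d$-collapse of the link on the face $\tau'=\tau\setminus\sigma$, which is free in $\lk(K_i,\sigma)$ with unique maximal face $\eta\setminus\sigma$ and satisfies $|\tau'|\le|\tau|\le d$. Your verification of the freeness of $\tau'$ via the inclusion $\mu\cup\sigma\subseteq\eta$ is exactly the right mechanism.

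The only spot worth spelling out a bit more explicitly is the step at which $\sigma$ itself is deleted (necessarily $\tau\subseteq\sigma$, so $\tau'=\varnothing$): at that moment the link has $\eta\setminus\sigma$ as its \emph{unique} maximal face, so the empty set is free and the induced collapse sends the link to the void complex in one move; thereafter $\lk(K_j,\sigma)$ is void for all $j>i$ and the process terminates. This is covered by your case $\sigma\subseteq\eta$, but since $\tau'=\varnothing$ is an edge case of the definition of an elementary collapse it does no harm to flag it.
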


It will be convenient to use the following equivalent definition of $d$-collapsibility:

\begin{lemma}[Tancer {\cite[Lemma 5.2]{Tan10}}]\label{lemma:dcoll_eq_def_recursive}
Let $K$ be a simplicial complex. Then, $K$ is $d$-collapsible if and only if
one of the following holds:
\begin{itemize}
\item $\dim(K)<d$, or
\item There exists some $\sigma\in K$ such that $|\sigma|=d$, $\sigma$ is contained in a unique maximal face $\tau\neq \sigma$ of $K$, and $\cost(K,\sigma)$ is $d$-collapsible.
\end{itemize}
\end{lemma}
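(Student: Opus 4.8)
The plan is to prove the two implications separately; the ``if'' direction is short, while the ``only if'' direction carries the bulk of the work, via strong induction on the number of faces of $K$ together with a refinement procedure for collapsing sequences.

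For the ``if'' direction I would argue as follows. If $\dim(K)<d$, then every maximal face of $K$ has size at most $d$ and is trivially free, being the unique maximal face containing itself; collapsing maximal faces one at a time reduces $K$ to the void complex, so $K$ is $d$-collapsible. If instead there is a free face $\sigma$ with $|\sigma|=d$, contained in a unique maximal face $\tau\neq\sigma$, and with $\cost(K,\sigma)$ $d$-collapsible, then the operation removing all faces containing $\sigma$ is a legitimate elementary $d$-collapse transforming $K$ into $\cost(K,\sigma)$; prepending this step to a collapsing sequence of $\cost(K,\sigma)$ exhibits $K$ as $d$-collapsible.

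For the ``only if'' direction, assume $K$ is $d$-collapsible. If $\dim(K)<d$ the first alternative holds, so I assume $\dim(K)\geq d$ and induct on the number of faces. I fix a collapsing sequence and let $\sigma_1$ be its first free face, with unique maximal face $\tau_1$ and $|\sigma_1|\leq d$. The key technical tool is a \emph{splitting} step: if $\sigma_1\subsetneq\tau_1$ and $v\in\tau_1\setminus\sigma_1$, then (i) $\sigma_1\cup\{v\}$ is again free with unique maximal face $\tau_1$; (ii) after collapsing $\sigma_1\cup\{v\}$ the face $\sigma_1$ is still free, now with unique maximal face $\tau_1\setminus\{v\}$; and (iii) collapsing $\sigma_1\cup\{v\}$ and then $\sigma_1$ removes exactly the faces that the single collapse of $\sigma_1$ removes. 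Replacing the collapse of $\sigma_1$ by this pair therefore yields a valid collapsing sequence whose first face has size $|\sigma_1|+1$. Iterating the splitting step on the first face, whose unique maximal face stays equal to $\tau_1$, strictly increases the size of the first collapsed face while it remains non-maximal of size $<d$; the process terminates with a collapsing sequence whose first face $\mu$ is either non-maximal of size exactly $d$, or maximal of size at most $d$.

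In the first terminal case, $\mu$ is a free face of size $d$ with unique maximal face $\tau_1\neq\mu$, and $\cost(K,\mu)$ is precisely the complex obtained after this first collapse, which the rest of the sequence reduces to void; thus $\sigma=\mu$ satisfies the conclusion. In the second terminal case, $\mu$ is a maximal face of size at most $d$, so its collapse removes only $\mu$ and leaves $K_2=K\setminus\{\mu\}$, which is $d$-collapsible and still satisfies $\dim(K_2)\geq d$ since $\mu$ has dimension $<d$. By the inductive hypothesis $K_2$ contains a free $d$-face $\sigma\subsetneq\tau$ with $\cost(K_2,\sigma)$ $d$-collapsible; as $|\mu|\leq d=|\sigma|$ and $\mu\neq\sigma$ we get $\mu\not\supseteq\sigma$, so $\sigma$ remains free in $K$ with the same unique maximal face $\tau$, and $\cost(K,\sigma)=\cost(K_2,\sigma)\cup\{\mu\}$. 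Collapsing the free maximal face $\mu$ first and then applying a collapsing sequence of $\cost(K_2,\sigma)$ shows $\cost(K,\sigma)$ is $d$-collapsible, completing the induction. The main obstacle is exactly the situation where the given sequence begins by collapsing a non-maximal face of size strictly less than $d$: the splitting step is what lets me trade such a collapse for one of size exactly $d$ (or else expose a maximal face), and the crux is verifying that splitting produces a valid sequence removing the same set of faces.
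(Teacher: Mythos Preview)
The paper does not give its own proof of this lemma; it is quoted as \cite[Lemma 5.2]{Tan10} and used as a black box. (A commented-out passage in the source shows that the authors at one point considered deducing it from Tancer's original formulation---that any $d$-collapsing sequence can be reordered so that all collapses on faces of size exactly $d$ come first---but this was dropped.) So there is no in-paper argument to compare against.

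Your proof is correct, and the ``splitting'' manoeuvre you describe is exactly the device Tancer uses to prove his Lemma~5.2. A couple of places where the write-up could be tightened: when you iterate the splitting, you should say explicitly that the case ``$\sigma_1$ is already maximal'' is handled before any splitting (you cannot split a maximal face), so the trichotomy at termination is clean; and in the second terminal case you should record that $\mu=\tau_1$ is maximal in $K$, which is what you need later to see that $\mu$ is maximal---hence free---in $\cost(K,\sigma)$. With those two remarks made explicit, the argument is complete.
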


\section{Some topological preliminaries}\label{sec:tolerance_homological}

In this section we prove some auxiliary results on the homology groups of simplicial complexes that we will later need.

\begin{comment}
%%%% if we want to use the equivalent definition of d-collapsibility.
\begin{lemma}[Tancer~{\cite[Lemma 5.2]{Tan10}}]\label{lemma:tancer2010}
Given a $d$-collapsible complex, there exists a sequence of elementary $d$-collapses such that first only $(d-1)$-dimensional faces are collapsed and after that faces of dimension less than $d-1$ are removed.
\end{lemma}
The above lemma implies the following.
\begin{proposition}
Let $K$ be a simplicial complex.
Then, $K$ is $d$-collapsible if and only if either $\dim(K) \leq d-1$ or there exists a free face $\sigma \in K$ such that $|\sigma| = d$ and $\cost(K,\sigma)$ is $d$-collapsible.
\end{proposition}
\begin{proof}
The only if part is obvious by the definition of $d$-collapsibility.
Suppose $K$ is $d$-collapsible and $\dim(K) > d$.
By Lemma~\ref{lemma:tancer2010}, there exists a sequence of elementary $d$-collapses such that removes all faces of $K$ except the faces of dimension less than $d-1$.
Take any such sequence, and let $\sigma$ be the free face of the first elementary $d$-collapse of the sequence.
Then, $|\sigma| = d$ and $\cost(K,\sigma)$ is $d$-collapsible.
\end{proof}
\end{comment}

%\subsection{Homology groups of the union of simplicial complexes}

Using the Mayer-Vietoris exact sequence (Theorem~\ref{thm:mayer_vietoris}) and Leray's Nerve Theorem (Theorem~\ref{thm:nerve_theorem}), we can prove the following.

\begin{lemma}\label{lemma:nerve_contractible_intersections}
Let $X_1,\ldots, X_m$ be simplicial complexes, and let $X=\bigcup_{i=1}^m X_i$. If for all $I\subset[m]$ of size at least $2$, the complex $\cap_{i\in I} X_i$ is non-empty and acyclic, then
\[
    \fhomology{k}{X}\cong\bigoplus_{i=1}^m \fhomology{k}{X_i}.
\]
for all $k\geq -1$.
\end{lemma}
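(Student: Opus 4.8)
The plan is to prove the statement by induction on $m$. The base case $m=1$ is trivial, since then $X = X_1$. For the inductive step I would write $Y = \bigcup_{i=1}^{m-1} X_i$ and $Z = X_m$, so that $X = Y \cup Z$, and apply the Mayer--Vietoris sequence (Theorem~\ref{thm:mayer_vietoris}) to this decomposition. The entire argument then hinges on showing that the intersection $Y \cap Z$ is acyclic; once that is established, Mayer--Vietoris immediately yields $\fhomology{k}{X} \cong \fhomology{k}{Y} \oplus \fhomology{k}{X_m}$ for all $k \geq -1$ (both neighbouring terms $\fhomology{k}{Y\cap Z}$ and $\fhomology{k-1}{Y\cap Z}$ vanish), and the inductive hypothesis applied to the family $\{X_1,\dots,X_{m-1}\}$ — which plainly still satisfies the hypothesis — finishes the proof.

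To see that $Y \cap Z$ is acyclic, I would observe that $Y \cap Z = \bigcup_{i=1}^{m-1} W_i$ where $W_i := X_i \cap X_m$. For any non-empty $I \subseteq [m-1]$ we have $\bigcap_{i \in I} W_i = \bigcap_{i \in I \cup \{m\}} X_i$, and since $|I \cup \{m\}| \geq 2$, this intersection is non-empty and acyclic by hypothesis. Hence every intersection of members of $\{W_1,\dots,W_{m-1}\}$ is non-empty and acyclic, so Leray's Nerve Theorem (Theorem~\ref{thm:nerve_theorem}) applies and gives $\fhomology{k}{Y \cap Z} \cong \fhomology{k}{N(\{W_1,\dots,W_{m-1}\})}$. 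But precisely because all these intersections are non-empty, the nerve $N(\{W_1,\dots,W_{m-1}\})$ is the full simplex $2^{[m-1]}$, which is a cone and therefore acyclic. Thus $\fhomology{k}{Y \cap Z} = 0$ for all $k \geq -1$, as needed.

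The main point to watch — rather than a genuine obstacle — is the bookkeeping around reduced homology and the empty complex: one needs $Y \cap Z$ to be non-empty (it is, since $W_1 = X_1 \cap X_m$ is non-empty by hypothesis once $m \geq 2$) so that the $\fhomology{-1}$ terms behave correctly, and one must check that the hypothesis of the nerve theorem is genuinely met by the auxiliary family $\{W_i\}$. The reason the requirement ``size at least $2$'' in the lemma is exactly what is needed is that adjoining the index $m$ to any $I \subseteq [m-1]$ lands us in the range where the hypothesis applies; this is also why we cannot simply apply the nerve theorem to $\{X_1,\dots,X_m\}$ directly, as the singletons $X_i$ need not be acyclic. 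With these checks in place the induction goes through cleanly.
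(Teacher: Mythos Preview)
Your proposal is correct and follows essentially the same argument as the paper's proof: induction on $m$, splitting $X$ as $\bigl(\bigcup_{i<m}X_i\bigr)\cup X_m$, using Mayer--Vietoris, and showing the intersection $\bigcup_{i<m}(X_i\cap X_m)$ is acyclic via the Nerve Theorem applied to the family $\{X_i\cap X_m\}_{i<m}$, whose nerve is the full simplex on $[m-1]$. Your additional remarks about the $\fhomology{-1}$ bookkeeping and why one cannot apply the nerve theorem directly to $\{X_1,\ldots,X_m\}$ are accurate and make the proposal, if anything, slightly more careful than the paper's version.
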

\begin{proof}
We argue by induction on $m$. For $m=1$ the claim is trivial. Assume $m>1$. Since $\bigcap_{i\in I} X_i$ is non-empty and acyclic for every $I\subset[m-1]$, we obtain, by the induction hypothesis,
\[
    \fhomology{k}{\bigcup_{i=1}^{m-1} X_i}\cong\bigoplus_{i=1}^{m-1}\fhomology{k}{X_i}
\]
for all $k\geq -1$. 

Since $X=\left(\bigcup_{i=1}^{m-1} X_i\right) \cup X_m$, we have by Theorem \ref{thm:mayer_vietoris} a long exact sequence
\[
\cdots \to \fhomology{k}{\bigcup_{i=1}^{m-1} (X_i\cap X_m)}\to \bigoplus_{i=1}^m \fhomology{k}{X_i} \to
\fhomology{k}{X} \to \fhomology{k-1}{\bigcup_{i=1}^{m-1} (X_i\cap X_m)}\to\cdots
\]
Hence, it is enough to show that 
\[
\fhomology{k}{\bigcup_{i=1}^{m-1} (X_i\cap X_m)}=0
\]
for all $k\geq -1$.

By the assumptions of this lemma, the nerve $N=N(\{X_i\cap X_m\}_{i=1}^{m-1})$ is the complete complex on vertex set $[m-1]$. Moreover, for all $I\subset [m-1]$, the complex
\[
    \bigcap_{i\in I} (X_i\cap X_m) = \bigcap_{i\in I\cup\{m\}} X_i
\]
is acyclic. Therefore, by Theorem \ref{thm:nerve_theorem}, we obtain
\[
\fhomology{k}{\bigcup_{i=1}^{m-1} (X_i\cap X_m)}\cong \fhomology{k}{N}=0
\]
for all $k\geq -1$. Thus, 
\[
    \fhomology{k}{X}\cong\bigoplus_{i=1}^m \fhomology{k}{X_i}
\]
for all $k\geq-1$.
\end{proof}

We will also need the following simple result about relative homology:

\begin{lemma}\label{lemma:relative_homology}
Let $X$ be a simplicial complex on vertex set $V$, and let $Y\subset X$ be a subcomplex. Assume that there is some $\sigma\in X$ and subcomplexes $W\subset Z\subset X[V\setminus \sigma]$ such that 
\[
    X\setminus Y = \{ \eta\cup \sigma: \eta\in Z\setminus W\}.
\]
Then,
\[
    \relhomology{k}{X,Y}\cong\relhomology{k-|\sigma|}{Z,W}
\]
for all $k$.
\end{lemma}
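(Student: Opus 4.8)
The statement says: if $X \setminus Y$ consists exactly of faces of the form $\eta \cup \sigma$ where $\eta$ ranges over $Z \setminus W$ (and $Z, W$ live on the vertex set avoiding $\sigma$), then $H_k(X,Y) \cong \tilde{H}_{k-|\sigma|}(Z,W)$. The natural approach is to build an explicit chain isomorphism $C_k(X,Y) \to C_{k-|\sigma|}(Z,W)$ that commutes with boundary maps up to sign, and then pass to homology.

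Let me think about how to set this up. Write $\sigma = \{w_1, \dots, w_s\}$ with $s = |\sigma|$, fixed in some order. A $k$-simplex in $X \setminus Y$ is $\eta \cup \sigma$ with $\eta \in Z \setminus W$ and $|\eta| = k+1-s$, so $\eta$ is a $(k-s)$-simplex. Define $\phi_k : C_k(X,Y) \to C_{k-s}(Z,W)$ on an oriented simplex by $\phi_k([v_0, \dots, v_{k-s}, w_1, \dots, w_s]) = [v_0, \dots, v_{k-s}]$, where we always order the generator so that the $\sigma$-vertices come last in the fixed order $w_1, \dots, w_s$. This is a well-defined linear map (extend by the sign relations), and it is clearly a bijection on the natural spanning sets — every generator of $C_{k-s}(Z,W)$ arises uniquely this way since $Z, W$ are on $V \setminus \sigma$ — so $\phi_k$ is an isomorphism of vector spaces for each $k$.

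The key computation is that $\phi$ is a chain map up to the global sign $(-1)^s$. Apply $\partial_k$ to $[v_0, \dots, v_{k-s}, w_1, \dots, w_s]$ in $C_k(X,Y)$: the terms deleting a $w_j$ all vanish, because deleting $w_j$ leaves a face of the form $\eta' \cup (\sigma \setminus w_j)$; I need to argue this face lies in $Y$, equivalently is not in $X \setminus Y$, which holds because by hypothesis every face of $X\setminus Y$ contains $\sigma$, and $\eta' \cup (\sigma\setminus w_j) \not\supseteq \sigma$. So only the terms deleting a $v_i$ survive, and each such term deletes $v_i$ from the front block and keeps $\sigma$ at the end; matching signs, $\partial_k = (-1)^s$ times the induced map, i.e. $\phi_{k-1} \circ \partial_k = (-1)^s\, \partial_{k-s} \circ \phi_k$. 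Here I should also check that a term $[v_0,\dots,\hat v_i,\dots,v_{k-s}]$ survives in $C_{k-s}(Z,W)$ (i.e. is not in $W$) if and only if the corresponding term $[v_0,\dots,\hat v_i,\dots, w_1,\dots,w_s]$ survives in $C_{k-1}(X,Y)$ (i.e. is not in $Y$) — this is exactly the hypothesis $X \setminus Y = \{\eta \cup \sigma : \eta \in Z \setminus W\}$ applied one dimension down, using that $\eta \in W \iff \eta \cup \sigma \in Y$ for $\eta \in Z$. The global sign $(-1)^s$ does not affect kernels or images, so $\phi$ induces isomorphisms $Z_k(X,Y) \cong Z_{k-s}(Z,W)$ and $B_k(X,Y) \cong B_{k-s}(Z,W)$, hence $H_k(X,Y) \cong H_{k-s}(Z,W) = \tilde H_{k-s}(Z,W)$ as desired. (Recall that for relative homology, unreduced and reduced coincide; the paper writes $\tilde H$ and $H$ interchangeably in the display, and since $Z \supsetneq W$ forces $Z\setminus W\neq\varnothing$ one checks the edge case $k - s = -1$ directly.)

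The main obstacle is bookkeeping rather than anything conceptual: carefully verifying the two "survival" equivalences — that deleting a $w_j$-vertex always produces a chain in $Y$, and that deleting a $v_i$-vertex produces a chain in $Y$ precisely when the corresponding smaller $\eta$-chain lies in $W$ — and tracking the permutation sign when moving a deleted $v_i$ past the block $w_1,\dots,w_s$. I would handle the signs by always insisting that generators are written with the $v$'s first (in increasing index) and the $w$'s last in the fixed order, so that the sign comparison between $\partial_k$ on $C_k(X,Y)$ and $\partial_{k-s}$ on $C_{k-s}(Z,W)$ reduces to the single factor $(-1)^{s}$ coming from the position shift, with everything else matching term by term.
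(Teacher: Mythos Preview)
Your approach is essentially the same as the paper's: define the linear isomorphism $\phi_k\colon C_k(X,Y)\to C_{k-|\sigma|}(Z,W)$ by $\eta\cup\sigma\mapsto\eta$, check that the $w_j$-deletion terms in $\partial_k$ vanish in $C_{k-1}(X,Y)$ because any face of $X\setminus Y$ must contain $\sigma$, and match the surviving $v_i$-deletion terms with $\partial_{k-|\sigma|}$ on $C_{k-|\sigma|}(Z,W)$ via the hypothesis. One small bookkeeping slip: with your chosen ordering (the vertices of $\sigma$ placed \emph{last}), the sign of the $v_i$-term is already $(-1)^i$ on both sides, so $\phi$ is an honest chain map and no extra factor $(-1)^s$ appears; the ``position shift'' sign would arise only if you put the $\sigma$-block first. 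As you note, this does not affect the conclusion either way.
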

\begin{proof}
For all $k$, let $\phi_k: C_k(X,Y )\to C_{k-|\sigma|}(Z,W )$ be defined by
\[
    \phi_k(\eta\cup \sigma)= \eta
\]
and extended linearly. Note that the maps $\phi_k$ are linear isomorphisms. Denote by $\partial_k$ the boundary operator of $C_k(X,Y )$ and by $\partial'_k$ the boundary operator of $C_k(Z,W)$. 
We are left to show that $\phi$ is a chain map. That is, for any $\eta\in Z(k)\setminus W(k)$, we need to show that
\[
    \phi_{k+|\sigma|-1}(\partial_{k+|\sigma|}(\eta\cup \sigma))= \partial'_k( \phi_{k+|\sigma|}(\eta\cup \sigma)).
\]
Let $\eta=\{u_0,\ldots,u_k\}$. For any $i\in\{0,\ldots,k\}$, let $\eta_i=\{u_0,\ldots,u_{i-1},u_{i+1},\ldots,u_k\}$.
Then, since any subset of $\eta\cup\sigma$ belonging to $X\setminus Y$ must contain $\sigma$, we have
\[
    \partial_{k+|\sigma|}(\eta\cup \sigma) =
        \sum_{\substack{i\in\{0,\ldots,k\}:\\ \eta_i\cup\sigma\notin Y}}  (-1)^i \eta_i\cup\sigma = 
        \sum_{\substack{i\in\{0,\ldots,k\}:\\ \eta_i\notin W}}  (-1)^i \eta_i\cup\sigma.
\]
Hence,
\[
    \phi_{k+|\sigma|-1}(\partial_{k+|\sigma|}(\sigma\cup\eta))=  \sum_{\substack{i\in\{0,\ldots,k\}:\\ \eta_i\notin  W}}  (-1)^i \eta_i = \partial'_k(\eta) = \partial'_k(\phi_{k+|\sigma|}(\eta\cup\sigma)).
\]
So $C_k(X,Y )$ and $C_{k-|\sigma|}(Z,W )$ are isomorphic as chain complexes, and in particular have isomorphic homology groups.
\end{proof}

\section{Proof of Theorem \ref{thm:main_tolerance}}\label{sec:tolerance_main}

In this section, we present the proof of Theorem \ref{thm:main_tolerance}. 

Note that the construction of the tolerance complexes depends on the vertex set of the original complex. 
For the construction of tolerance complexes, we will consider the vertex set of $K[U]$ to be the set $U$, the vertex set of $\cost(K,\sigma)$ to be $V$, and the vertex set of $\lk(K,\sigma)$ to be $V\setminus \sigma$.

\begin{lemma}\label{lemma:relative_decomposition}
Let $K$ be a simplicial complex on vertex set $V$, and let $\sigma\in K$. Then,
\begin{align*}
    \tol{K}{t}\setminus& \tol{\cost(K,\sigma)}{t} \\= &\left\{\sigma\cup\eta:\, \eta\in \tol{\lk(K,\sigma)}{t}\setminus \left(\bigcup_{\substack{\sigma'\subset \sigma:\\ 1\leq |\sigma'|\leq t}} \tol{\lk(K[V\setminus\sigma'],\sigma\setminus\sigma')}{t-|\sigma'|} \right)\right\}.
\end{align*}
\end{lemma}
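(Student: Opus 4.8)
The statement is purely combinatorial — both sides are explicitly described families of subsets of $V$ — so the plan is to characterize when a set $\rho\subset V$ belongs to the left-hand side and check that this matches the right-hand side. First I would unravel the definitions. Call $\zeta$ a \emph{witness} for $\rho$ if $\zeta\in K$, $\zeta\subset\rho$ and $|\rho\setminus\zeta|\le t$; then $\rho\in\tol{K}{t}$ iff $\rho$ has a witness, and (recalling that the vertex set of $\cost(K,\sigma)$ is taken to be $V$) $\rho\in\tol{\cost(K,\sigma)}{t}$ iff $\rho$ has a witness $\zeta$ with $\sigma\not\subset\zeta$. Hence $\rho\in\tol{K}{t}\setminus\tol{\cost(K,\sigma)}{t}$ exactly when $\rho$ has at least one witness and every witness of $\rho$ contains $\sigma$. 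In particular $\sigma\subset\rho$, so I may write $\rho=\sigma\cup\eta$ with $\eta:=\rho\setminus\sigma\subset V\setminus\sigma$; since the vertex set of $\lk(K,\sigma)$ is $V\setminus\sigma$, this is the form of the right-hand side, and it remains to pin down the admissible $\eta$.

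Next I would decompose an arbitrary candidate witness. Any $\zeta\subset\rho=\sigma\cup\eta$ is written uniquely as $\zeta=(\sigma\setminus\sigma')\cup\eta'$ with $\sigma':=\sigma\setminus\zeta\subset\sigma$ and $\eta':=\zeta\cap\eta\subset\eta$, all unions being disjoint; then $\rho\setminus\zeta=\sigma'\cup(\eta\setminus\eta')$, so $|\rho\setminus\zeta|=|\sigma'|+|\eta\setminus\eta'|$, and $\sigma\subset\zeta$ iff $\sigma'=\varnothing$. This immediately gives two translations: (i) ``$\rho$ has a witness $\zeta$ with $\sigma\subset\zeta$'' means there is $\eta'\subset\eta$ with $\sigma\cup\eta'\in K$ and $|\eta\setminus\eta'|\le t$, i.e. $\eta\in\tol{\lk(K,\sigma)}{t}$; and (ii) ``$\rho$ has no witness $\zeta$ with $\sigma\not\subset\zeta$'' means that for every $\sigma'\subset\sigma$ with $1\le|\sigma'|\le t$ there is no $\eta'\subset\eta$ with $(\sigma\setminus\sigma')\cup\eta'\in K$ and $|\eta\setminus\eta'|\le t-|\sigma'|$ (the range $|\sigma'|\le t$ being harmless, since a $\sigma'$ with $|\sigma'|>t$ can never produce a witness).

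Then I would identify (ii) with the complement of the union in the statement. Fix $\sigma'\subset\sigma$ with $1\le|\sigma'|\le t$. Since $\eta\subset V\setminus\sigma\subset V\setminus\sigma'$, a set $(\sigma\setminus\sigma')\cup\eta'$ with $\eta'\subset\eta$ lies in $K$ iff it lies in $K[V\setminus\sigma']$ iff $\eta'\in\lk(K[V\setminus\sigma'],\sigma\setminus\sigma')$, whose vertex set is $(V\setminus\sigma')\setminus(\sigma\setminus\sigma')=V\setminus\sigma$. Thus the existence of $\eta'\subset\eta$ with $(\sigma\setminus\sigma')\cup\eta'\in K$ and $|\eta\setminus\eta'|\le t-|\sigma'|$ is precisely $\eta\in\tol{\lk(K[V\setminus\sigma'],\sigma\setminus\sigma')}{t-|\sigma'|}$, so (ii) reads $\eta\notin\bigcup_{\sigma'\subset\sigma,\,1\le|\sigma'|\le t}\tol{\lk(K[V\setminus\sigma'],\sigma\setminus\sigma')}{t-|\sigma'|}$. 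Combining (i) and (ii) shows that $\sigma\cup\eta$ lies in the left-hand side iff $\eta$ lies in the set on the right-hand side, which finishes the proof.

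I do not expect a genuine conceptual obstacle here: the proof is an exercise in chasing definitions. The only points that require real care are the bookkeeping of the vertex sets of $K[U]$, $\cost(K,\sigma)$, $\lk(K,\sigma)$ and $\lk(K[V\setminus\sigma'],\sigma\setminus\sigma')$ as fixed in the remark preceding the statement; keeping track of the disjointness of the various unions when decomposing a witness $\zeta=(\sigma\setminus\sigma')\cup\eta'$; and justifying that the index set of the union is exactly $\{\sigma'\subset\sigma:1\le|\sigma'|\le t\}$ (the lower bound because $\sigma'=\varnothing$ corresponds to witnesses containing $\sigma$, the upper bound because larger $\sigma'$ contribute nothing).
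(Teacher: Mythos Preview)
Your proof is correct and follows essentially the same approach as the paper: both arguments unravel the definitions and prove the two inclusions by analyzing how a face of $\tol{K}{t}$ decomposes relative to $\sigma$. Your introduction of the ``witness'' terminology and the uniform decomposition $\zeta=(\sigma\setminus\sigma')\cup\eta'$ streamlines the bookkeeping somewhat compared to the paper's separate treatment of each inclusion, but the underlying logic is the same.
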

\begin{proof}
Suppose $\tau\in \tol{K}{t}\setminus \tol{\cost(K,\sigma)}{t}$.
Since $\tau\in \tol{K}{t}$, we can write $\tau=\tau'\cup\tau''$, where $\tau'\in K$ and $|\tau''|\leq t$. 
Moreover, we must have $\tau' \supset \sigma$. Otherwise, $\tau'\in \cost(K,\sigma)$, a contradiction to $\tau\notin \tol{\cost(K,\sigma)}{t}$.

Let $\eta=\tau\setminus \sigma$.
Then, we can write $\eta= (\tau'\setminus \sigma)\cup \tau''$. Since $\tau'\setminus\sigma\in \lk(K,\sigma)$, we obtain $\eta\in \tol{\lk(K,\sigma)}{t}$.
We claim that 
\[
\eta\notin \bigcup_{\substack{\sigma'\subset \sigma:\\ 1\leq |\sigma'|\leq t}} \tol{\lk(K[V\setminus\sigma'],\sigma\setminus\sigma')}{t-|\sigma'|}.
\]
Assume for contradiction that $\eta\in \tol{\lk(K[V\setminus \sigma'],\sigma\setminus \sigma')}{t-|\sigma'|}$ for some $\sigma'\subset \sigma$, $1\leq |\sigma'|\leq t$. Then, we can write
\[
    \eta=\eta_1\cup \eta_2,
\]
where $\eta_1\cap \sigma=\varnothing$, $\eta_1\cup(\sigma\setminus \sigma')\in K$ and $|\eta_2|\leq t-|\sigma'|$.
Hence, we obtain
\[
    \tau=\sigma\cup \eta= (\eta_1\cup (\sigma\setminus \sigma')) \cup (\sigma'\cup\eta_2).
\]
Since $\sigma\not\subset \eta_1\cup(\sigma\setminus\sigma')$ and $|\sigma' \cup \eta_2| \leq t$, we have $\tau\in\tol{\cost(K,\sigma)}{t}$, which is a contradiction to the assumption $\tau\in \tol{K}{t}\setminus \tol{\cost(K,\sigma)}{t}$.

For the opposite direction, suppose $\tau=\sigma\cup \eta$, where
\[
\eta\in \tol{\lk(K,\sigma)}{t}\setminus\left(\bigcup_{\substack{\sigma'\subset \sigma:\\ 1\leq |\sigma'|\leq t}} \tol{\lk(K[V\setminus\sigma'],\sigma\setminus\sigma')}{t-|\sigma'|}\right).
\]
We claim that $\tau \in \tol{K}{t}\setminus\tol{\cost(K,\sigma)}{t}$.
Since $\eta \in \tol{\lk(K,\sigma)}{t}$, we can write $\eta=\eta_1\cup \eta_2$, where $\eta_1\cap \sigma=\varnothing$, $\eta_1\cup \sigma\in K$ and $|\eta_2|\leq t$. 
Therefore, $\tau=(\eta_1\cup\sigma)\cup \eta_2\in \tol{K}{t}$. 
We are left to show that $\tau\notin \tol{\cost(K,\sigma)}{t}$. 
Assume for contradiction that $\tau\in \tol{\cost(K,\sigma)}{t}$. 
Then, we can write $\tau=\tau_1\cup \tau_2$, where $\tau_1\in K$, $\sigma\not\subset \tau_1$ and $|\tau_2|\leq t$. Let $\sigma'=\tau_2\cap \sigma$. 
Since $\sigma \not\subset \tau_1$ and $\sigma \subset \tau$, we must have $\sigma'\neq \varnothing$.
In particular, $1 \leq |\sigma'| \leq t$.
We can write $\eta$ as follows:
\[
\eta = \tau \setminus \sigma = (\tau_1\setminus (\sigma\setminus \sigma'))\cup (\tau_2\setminus \sigma').
\]
Note that $\tau_1\setminus (\sigma\setminus \sigma')\in \lk(K[V\setminus \sigma'],\sigma\setminus \sigma')$ and $|\tau_2\setminus \sigma'|\leq t-|\sigma'|$. Thus, $\eta\in \tol{\lk(K[V\setminus \sigma'],\sigma\setminus \sigma')}{t-|\sigma'|}$, which is a contradiction to the assumption on $\eta$. This completes the proof.
\end{proof}

By Lemma \ref{lemma:relative_homology}  and Lemma \ref{lemma:relative_decomposition}, we obtain:

\begin{corollary}\label{cor:relative_decomposition}
Let $K$ be a simplicial complex, and let $\sigma\in K$. Then, for all $k$, we have
\begin{multline*}
    \relhomology{k}{\tol{K}{t}, \tol{\cost(K,\sigma)}{t}}\\\cong \relhomology{k-|\sigma|}{ \tol{\lk(K,\sigma)}{t},\tol{\lk(K,\sigma)}{t}\cap \left( \bigcup_{\substack{\sigma'\subset \sigma:\\ 1\leq |\sigma'|\leq t}} \tol{\lk(K[V\setminus\sigma'],\sigma\setminus\sigma')}{t-|\sigma'|} \right)}.
\end{multline*}
\end{corollary}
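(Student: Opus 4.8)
The plan is to derive the corollary directly by feeding the combinatorial decomposition of Lemma~\ref{lemma:relative_decomposition} into the chain-level isomorphism of Lemma~\ref{lemma:relative_homology}. Concretely, I would apply Lemma~\ref{lemma:relative_homology} with $X=\tol{K}{t}$, $Y=\tol{\cost(K,\sigma)}{t}$, with distinguished simplex $\sigma$, and with
\[
    Z=\tol{\lk(K,\sigma)}{t}, \qquad W = \tol{\lk(K,\sigma)}{t}\cap \left( \bigcup_{\substack{\sigma'\subset \sigma:\\ 1\leq |\sigma'|\leq t}} \tol{\lk(K[V\setminus\sigma'],\sigma\setminus\sigma')}{t-|\sigma'|} \right).
\]

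First I would check the easy structural hypotheses. Since $\cost(K,\sigma)\subseteq K$, applying the tolerance construction preserves the inclusion, so $Y=\tol{\cost(K,\sigma)}{t}$ is a subcomplex of $X=\tol{K}{t}$. Next, using that a union and an intersection of simplicial complexes are again simplicial complexes, $Z$ and $W$ are simplicial complexes and $W\subseteq Z$ is immediate. For $Z\subseteq X[V\setminus\sigma]$, observe that a face of $\lk(K,\sigma)$ has the form $\eta_1$ with $\eta_1\subseteq V\setminus\sigma$ and $\eta_1\cup\sigma\in K$, hence $\eta_1\in K$; so any $\eta=\eta_1\cup\eta_2\in Z$ with $|\eta_2|\leq t$ lies in $\tol{K}{t}$ and avoids $\sigma$, i.e. $\eta\in X[V\setminus\sigma]$. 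In particular $W\subseteq Z\subseteq X[V\setminus\sigma]$, as required.

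The substantive point is the identity $X\setminus Y=\{\eta\cup\sigma:\eta\in Z\setminus W\}$. By Lemma~\ref{lemma:relative_decomposition}, $X\setminus Y$ is exactly the set of $\sigma\cup\eta$ with $\eta$ ranging over $\tol{\lk(K,\sigma)}{t}\setminus\big(\bigcup_{\sigma'}\tol{\lk(K[V\setminus\sigma'],\sigma\setminus\sigma')}{t-|\sigma'|}\big)$. Since removing a set from $Z$ is unchanged if one first intersects that set with $Z$, the admissible $\eta$ are precisely those in $Z\setminus W$. All hypotheses of Lemma~\ref{lemma:relative_homology} are then satisfied, and it yields $\relhomology{k}{X,Y}\cong\relhomology{k-|\sigma|}{Z,W}$ for all $k$, which is the claimed isomorphism.

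I do not expect a real obstacle: the topological content is already packaged in Lemma~\ref{lemma:relative_homology}, and the combinatorial content in Lemma~\ref{lemma:relative_decomposition}. The only things needing care are bookkeeping with vertex sets (taking $\lk(K,\sigma)$ on vertex set $V\setminus\sigma$ and each $\lk(K[V\setminus\sigma'],\sigma\setminus\sigma')$ on vertex set $V\setminus\sigma'$, so that the tolerance complexes appearing in the union defining $W$ are all subcomplexes of $\tol{\lk(K,\sigma)}{t}$ and the union is meaningful) and the elementary closure facts for unions and intersections of complexes that make $W$ a legitimate subcomplex to supply to Lemma~\ref{lemma:relative_homology}.
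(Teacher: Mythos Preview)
Your argument is correct and is exactly the paper's approach: feed Lemma~\ref{lemma:relative_decomposition} into Lemma~\ref{lemma:relative_homology} with the choices of $X,Y,Z,W,\sigma$ you specify. One minor correction to your closing remark: under the paper's conventions the vertex set of $\lk(K[V\setminus\sigma'],\sigma\setminus\sigma')$ is $V\setminus\sigma$ (not $V\setminus\sigma'$), and the complexes $\tol{\lk(K[V\setminus\sigma'],\sigma\setminus\sigma')}{t-|\sigma'|}$ are in general \emph{not} subcomplexes of $\tol{\lk(K,\sigma)}{t}$ (e.g.\ take $K$ with maximal faces $\{1,2\},\{1,3,4\}$, $\sigma=\{1,2\}$, $t=1$, $\sigma'=\{2\}$), which is why the statement intersects the union with $Z$; this does not affect your argument, since $Z\setminus(\text{union})=Z\setminus W$ holds regardless.
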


\begin{proposition}
\label{prop:main}
Let $K$ be a simplicial complex, and $\sigma\in K$ such that $\sigma$ is contained in a unique maximal simplex $\sigma\cup U \in K$, where $U\neq \varnothing$. Then, for all $k$,
\begin{align*}
    &\relhomology{k}{\tol{K}{t},\tol{\cost(K,\sigma)}{t}} \\&\cong \bigoplus_{\substack{ W\subset V\setminus (\sigma\cup U):\\  |W|=t}}
    \fhomology{k-|\sigma|-1}{\bigcup_{\substack{\sigma'\subset\sigma:\\ 1\leq |\sigma'|\leq t}} \tol{\lk(K,\sigma\setminus \sigma')[U\cup W]}{t-|\sigma'|}}.
%    \fhomology{k-|\sigma|-1}{\bigcup_{\substack{\sigma'\subset\sigma:\\ 1\leq |\sigma'|\leq t}} \tol{\lk(K[U\cup W],\sigma\setminus \sigma')}{t-|\sigma'|}}.
\end{align*}
\end{proposition}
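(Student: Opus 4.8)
The plan is to reduce, via Corollary~\ref{cor:relative_decomposition}, the computation of $\relhomology{k}{\tol{K}{t},\tol{\cost(K,\sigma)}{t}}$ to the reduced homology of a single explicit subcomplex, and then to decompose that subcomplex by a nerve argument (Lemma~\ref{lemma:nerve_contractible_intersections}) indexed by the ``tolerated'' vertices lying outside the unique maximal face $\sigma\cup U$.

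First I would set aside the degenerate cases: if $\sigma=\varnothing$ the hypothesis forces $K=2^{U}$, and if $t=0$ the right-hand side is an empty direct sum; in both cases a direct check (using that $\tol{2^{U}}{t}$ is a cone) shows that both sides vanish. So assume $t\ge 1$ and $\sigma\ne\varnothing$. Since $\sigma$ lies in the unique maximal face $\sigma\cup U$ (with $\sigma\cap U=\varnothing$), any $\tau$ with $\tau\cup\sigma\in K$ satisfies $\tau\cup\sigma\subseteq\sigma\cup U$, so $\lk(K,\sigma)=2^{U}$, regarded as a complex on vertex set $V\setminus\sigma$. Put $R=V\setminus(\sigma\cup U)$; then $\tol{2^{U}}{t}=\{\rho\subseteq V\setminus\sigma:\,|\rho\cap R|\le t\}$, which is a cone over any $u\in U$ and hence acyclic. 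Substituting $\lk(K,\sigma)=2^{U}$ into Corollary~\ref{cor:relative_decomposition}, and then using the long exact sequence of the pair (Theorem~\ref{thm:exact_sequence_pair}) together with the acyclicity of $\tol{2^{U}}{t}$, we obtain
\[
\relhomology{k}{\tol{K}{t},\tol{\cost(K,\sigma)}{t}}\;\cong\;\fhomology{k-|\sigma|-1}{L},\qquad L:=\tol{2^{U}}{t}\cap M,
\]
where $M=\bigcup_{\sigma'\subseteq\sigma,\;1\le|\sigma'|\le t}\tol{\lk(K[V\setminus\sigma'],\sigma\setminus\sigma')}{t-|\sigma'|}$. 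A routine set manipulation shows $\lk(K[V\setminus\sigma'],\sigma\setminus\sigma')=\lk(K,\sigma\setminus\sigma')[V\setminus\sigma]$, which I will use to match notation with the target.

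Next I decompose $L$. If $|R|<t$ then $\tol{2^{U}}{t}=2^{V\setminus\sigma}$ and, taking any one-element $\sigma'\subseteq\sigma$, the corresponding summand $\tol{\lk(K,\sigma\setminus\sigma')[V\setminus\sigma]}{t-1}$ of $M$ already contains the face $U\cup R=V\setminus\sigma$ (witnessed by $\rho_{1}=U$, $\rho_{2}=R$), so $L=M=2^{V\setminus\sigma}$ is acyclic and the proposition holds (the right-hand side being empty). So assume $|R|\ge t$. For each $t$-subset $W\subseteq R$ set $N_{W}:=L[U\cup W]$. Every $\rho\in L$ satisfies $|\rho\cap R|\le t$, so $\rho\cap R$ extends to some such $W$ with $\rho\subseteq U\cup W$; hence $L=\bigcup_{W\subseteq R,\;|W|=t}N_{W}$. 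For any collection $\mathcal{W}$ of at least two distinct $t$-subsets of $R$ one has $\bigcap_{W\in\mathcal{W}}N_{W}=L[U\cup W_{0}]$ with $W_{0}=\bigcap_{W\in\mathcal{W}}W$, so $|W_{0}|\le t-1$. I claim $L[U\cup W_{0}]$ is a cone over any $u\in U$: if $\rho\in L[U\cup W_{0}]$ then $|\rho\cap R|\le|W_{0}|\le t-1$, so picking a one-element $\sigma'\subseteq\sigma$ and taking $\rho_{1}=(\rho\cap U)\cup\{u\}\subseteq U$, $\rho_{2}=\rho\cap R$ exhibits $\rho\cup\{u\}$ as a face of $\tol{\lk(K,\sigma\setminus\sigma')[V\setminus\sigma]}{t-1}\cap\tol{2^{U}}{t}$ contained in $U\cup W_{0}$, i.e.\ $\rho\cup\{u\}\in L[U\cup W_{0}]$. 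These intersections are therefore non-empty and acyclic, so Lemma~\ref{lemma:nerve_contractible_intersections} gives $\fhomology{j}{L}\cong\bigoplus_{W\subseteq R,\;|W|=t}\fhomology{j}{N_{W}}$ for all $j$. Finally, unwinding the definitions (membership of $\rho\subseteq U\cup W$ in $M$ is witnessed by some $\sigma'$ and then forces $\rho\in\tol{\lk(K,\sigma\setminus\sigma')[U\cup W]}{t-|\sigma'|}$, while conversely every such face lies in $\tol{2^{U}}{t}$) identifies
\[
N_{W}=L[U\cup W]=\bigcup_{\sigma'\subseteq\sigma,\;1\le|\sigma'|\le t}\tol{\lk(K,\sigma\setminus\sigma')[U\cup W]}{t-|\sigma'|}.
\]
Combining the three displayed isomorphisms, with $j=k-|\sigma|-1$, yields the proposition.

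I expect the main obstacle to be proving that the higher intersections $\bigcap_{W\in\mathcal{W}}N_{W}$ in the nerve argument are acyclic. The crux is the observation that as soon as a face of $L$ leaves at least one ``tolerance slot'' among the $R$-vertices unused — which is precisely what $|W_{0}|\le t-1$ guarantees whenever $|\mathcal{W}|\ge 2$ — its membership in $M$ can be rewritten using a single-vertex subface $\sigma'$ of $\sigma$ and an arbitrary $u\in U$, making $L[U\cup W_{0}]$ a cone over $u$. The remaining ingredients — the identity $\lk(K,\sigma)=2^{U}$, the $\lk$-restriction identities, the bookkeeping turning $N_{W}$ into the union over $\sigma'$, and the two degenerate cases — are routine manipulations of tolerance complexes and links.
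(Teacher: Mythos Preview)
Your proposal is correct and follows essentially the same route as the paper: apply Corollary~\ref{cor:relative_decomposition}, use $\lk(K,\sigma)=2^{U}$ and the long exact sequence of the pair to reduce to the reduced homology of $\tol{2^{U}}{t}\cap Y$, decompose this as $\bigcup_{|W|=t}Y_{W}$ with $Y_{W}=Y[U\cup W]$, check that higher intersections are acyclic, and conclude via Lemma~\ref{lemma:nerve_contractible_intersections}. The only notable differences are cosmetic: the paper observes directly that $\bigcap_{i}Y_{W_{i}}=2^{U\cup(\cap_{i}W_{i})}$ is a full simplex (since $U\cup(\cap_{i}W_{i})$ itself lies in one of the summands of $Y$), whereas you prove the weaker but sufficient fact that it is a cone; and you handle the degenerate cases $\sigma=\varnothing$, $t=0$, and $|R|<t$ explicitly, which the paper passes over in silence.
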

\begin{proof}
Let
\[
    Y=\bigcup_{\substack{\sigma'\subset \sigma:\\ 1\leq |\sigma'|\leq t}} \tol{\lk(K[V\setminus\sigma'],\sigma\setminus\sigma')}{t-|\sigma'|}.
\]
By Corollary \ref{cor:relative_decomposition}, we have
\[
 \relhomology{k}{\tol{K}{t},\tol{\cost(K,\sigma)}{t}}\cong\relhomology{k-|\sigma|}{\tol{\lk(K,\sigma)}{t},\tol{\lk(K,\sigma)}{t}\cap Y}.
\]
By applying Theorem~\ref{thm:exact_sequence_pair} to the pair $\tol{\lk(K,\sigma)}{t}\cap Y \subset \tol{\lk(K,\sigma)}{t}$, we obtain a long exact sequence
\begin{multline*}
\cdots\to \fhomology{k-|\sigma|}{\tol{\lk(K,\sigma)}{t}}\to \relhomology{k-|\sigma|}{\tol{\lk(K,\sigma)}{t},\tol{\lk(K,\sigma)}{t}\cap Y}\to
\\
\to \fhomology{k-|\sigma|-1}{\tol{\lk(K,\sigma)}{t}\cap Y}\to
\fhomology{k-|\sigma|-1}{\tol{\lk(K,\sigma)}{t}}
\to\cdots
\end{multline*}
Note that $\lk(K,\sigma)=2^U$; therefore, 
\[
\tol{\lk(K,\sigma)}{t}=2^U\ast \{\tau\subset V\setminus(U\cup \sigma):\, |\tau|\leq t\}.
\]
In particular, since $U\neq \varnothing$, $\tol{\lk(K,\sigma)}{t}$ is contractible. Hence,
\[
\relhomology{k-|\sigma|}{\tol{\lk(K,\sigma)}{t},\tol{\lk(K,\sigma)}{t}\cap Y}\cong \fhomology{k-|\sigma|-1}{\tol{\lk(K,\sigma)}{t}\cap Y}.
\]
We can write
%\[
\begin{align}\label{eq:Y_W}
   \tol{\lk(K,\sigma)}{t}\cap Y =
   \bigcup_{\substack{W\subset V\setminus(\sigma\cup U):\\ |W|=t}} 2^{U\cup W} \cap Y
   =
   \bigcup_{\substack{W\subset V\setminus(\sigma\cup U):\\ |W|=t}} Y_W, 
\end{align}
%\]
where
\[
Y_W= Y[U\cup W] =\bigcup_{\substack{\sigma'\subset \sigma:\\ 1\leq |\sigma'|\leq t}} \tol{\lk(K,\sigma\setminus\sigma')[U\cup W]}{t-|\sigma'|}.
\] 
Let $m>1$, and let $W_1,\ldots,W_m\subset V\setminus(\sigma\cup U)$ be distinct sets, such that $|W_i|=t$ for all $i\in[m]$. Then,
\[
\bigcap_{i=1}^m Y_{W_i} = \bigcup_{\substack{\sigma'\subset \sigma:\\ 1\leq |\sigma'|\leq t}} \tol{\lk(K,\sigma\setminus\sigma')[U\cup (\cap_{i=1}^m W_i)]}{t-|\sigma'|}.
\]
Since $|\cap_{i=1}^m W_i|\leq t-1$, we have, for any $v\in \sigma$,
\[
    U\cup (\cap_{i=1}^m W_i) \in \tol{\lk(K,\sigma\setminus\{v\})[U\cup  (\cap_{i=1}^m W_i)]}{t-1}.
\]
In particular, it implies
\[
    U\cup (\cap_{i=1}^m W_i) \in\bigcap_{i=1}^m Y_{W_i},
\]
and hence, we conclude 
\[
\bigcap_{i=1}^m Y_{W_i}= 2^{U\cup(\cap_{i=1}^m W_i)}.
\]
Since $U\neq \varnothing$, the intersection $\bigcap_{i=1}^m Y_{W_i}$ is non-empty and acyclic.
Therefore, by applying Lemma~\ref{lemma:nerve_contractible_intersections} to \eqref{eq:Y_W}, we obtain
\begin{multline*}
\fhomology{k-|\sigma|-1}{\tol{\lk(K,\sigma)}{t}\cap Y}\cong \bigoplus_{\substack{ W\subset V\setminus (\sigma\cup U):\\  |W|=t}} \fhomology{k-|\sigma|-1}{Y_W}
\\
\cong\bigoplus_{\substack{ W\subset V\setminus (\sigma\cup U):\\  |W|=t}}\fhomology{k-|\sigma|-1}{\bigcup_{\substack{\sigma'\subset\sigma:\\ 1\leq |\sigma'|\leq t}} \tol{\lk(K,\sigma\setminus \sigma')[U\cup W]}{t-|\sigma'|}},
\end{multline*}
as wanted.
\end{proof}

Recall that $h(t,d)$ is defined as follows: $h(0,d)=d$ for all $d\geq 0$, and for $t>0$,
\[
    h(t,d)= \left(\sum_{s=1}^{\min\{t,d\}} \binom{d}{s}(h(t-s,d)+1)\right) + d.
\]

\begin{lemma}\label{lemma:h_function}
\begin{itemize}
 \item[]
    \item $h(t,1)=2t+1$,
    \item $h(1,d)=d^2+2d$, 
    \item For fixed $t$, $h(t,d)=O(d^{t+1})$.
\end{itemize}

%$h(t,d)=\begin{cases} 2t+1&\text{ if }d=1, \\ d^2+2d&\text{ if }t=1, \\ O(d^{t+1})&\text{ if }t\text{ is fixed.} \end{cases}$
%For $d=1$, we have
%\[
%    h(t,1)=2t+1.
%\]
%For $t=1$, we have
%\[
%    h(1,d)=d^2+2d.
%\]
%For fixed $t$, we have
%\[  
%    h(t,d)=O(d^{t+1}).
%\]
\end{lemma}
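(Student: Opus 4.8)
The plan is to derive all three statements directly from the recursive definition of $h$, handling each item separately by a short computation or induction.

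For the first item, I would specialize the recursion to $d=1$. Since $\min\{t,1\}=1$ for every $t\geq 1$, the sum collapses to its single $s=1$ term, giving $h(t,1)=\binom{1}{1}(h(t-1,1)+1)+1 = h(t-1,1)+2$. Combined with the base case $h(0,1)=1$, a trivial induction on $t$ yields $h(t,1)=2t+1$. For the second item, I would simply evaluate the recursion at $t=1$: here $\min\{1,d\}=1$ for every $d\geq 1$, so again only the $s=1$ term survives, and using $h(0,d)=d$ we get $h(1,d)=\binom{d}{1}(h(0,d)+1)+d = d(d+1)+d = d^2+2d$.

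For the third item, I would argue by induction on $t$, the base case $t=0$ being immediate from $h(0,d)=d$. Assume $h(t',d)=O(d^{t'+1})$ for all $t'<t$, with the implied constants allowed to depend on $t'$. Fix $t$ and take $d\geq t$, so that $\min\{t,d\}=t$ and the recursion reads $h(t,d)=\sum_{s=1}^{t}\binom{d}{s}(h(t-s,d)+1)+d$. Each summand satisfies $\binom{d}{s}(h(t-s,d)+1)=O(d^{s})\cdot O(d^{t-s+1})=O(d^{t+1})$, and since there are exactly $t$ summands (a fixed constant once $t$ is fixed), the whole sum, and hence $h(t,d)$, is $O(d^{t+1})$; the dominant contribution is the $s=1$ term $\binom{d}{1}h(t-1,d)=O(d^{t+1})$.

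I do not expect a genuine obstacle in any of the three parts. The only point needing a little care is in the third item: one must observe that $\min\{t,d\}=t$ holds for all sufficiently large $d$, which is all that is needed for an asymptotic claim, and that the constants hidden in the $O$-notation are permitted to depend on the fixed parameter $t$.
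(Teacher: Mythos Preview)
Your proposal is correct and follows essentially the same approach as the paper: the first two items are handled by the identical direct computations, and the third by the same induction on $t$, bounding each term $\binom{d}{s}(h(t-s,d)+1)$ by $O(d^{t+1})$. The only cosmetic difference is that the paper tracks explicit constants $C_t$ rather than working purely in $O$-notation.
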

\begin{proof}
First, we show that $h(t,1)=2t+1$. We argue by induction on $t$. For $t=0$ we have $h(0,1)=1=2t+1$. Now, assume $t>0$. Then, by the definition of $h(t,d)$ and the induction hypothesis, we obtain
\[
    h(t,1)= h(t-1,1)+1+1= 2(t-1)+3= 2t+1.
\]

%%%%%Next, we show that $h(1,d)=d^2+2d$. We have
Next, we show that $h(1,d)=d^2+2d$. 
Indeed, this follows immediately from the definition of $h(t,d)$
\[
    h(1,d)= d(h(0,d)+1)+d = d^2+2d.
\]

Finally, we show that, for fixed $t$, $h(t,d)=O(d^{t+1})$. We argue by induction on $t$. For $t=0$ we have $h(0,d)=d=O(d)$. Let $t>1$. We will show that there is some constant $C_t$ such that, for large enough $d$, $h(t,d)\leq C_t d^{t+1}$. By the definition of $h(t,d)$ and the induction hypothesis, we have,
\begin{align*}
       h(t,d)&= \left(\sum_{s=1}^{t} \binom{d}{s}(h(t-s,d)+1)\right) + d
       \\
       &\leq \left(\sum_{s=1}^t \frac{d^s}{s!} (C_{t-s} d^{t-s+1} +1)\right)+d
       \\
       &= \left( \sum_{s=1}^t\frac{C_{t-s}}{s!}\right) d^{t+1} + \left(\sum_{s=1}^t \frac{d^s}{s!} +d \right)
       \\
       &\leq C_t d^{t+1}
\end{align*}
for $C_t >\sum_{s=1}^t\frac{C_{t-s}}{s!} $ and large enough $d$. So, for fixed $t$, we have $h(t,d)=O(d^{t+1})$.
\end{proof}

Now we are ready to prove our main result.
\begingroup
\def\thetheorem{\ref{thm:main_tolerance}}
\begin{theorem}
Let $K$ be a $d$-collapsible complex, and let $t\geq 0$ be an integer. Then, $\tol{K}{t}$ is $h(t,d)$-Leray.
\end{theorem}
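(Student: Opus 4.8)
The plan is to prove, by induction on $t$, the following statement: \emph{for every $d$ and every $d$-collapsible complex $K$ on a vertex set $V$, one has $\fhomology{i}{\tol{K}{t}}=0$ for all $i\ge h(t,d)$}. This already yields Theorem~\ref{thm:main_tolerance}: for any $U\subseteq V$ the identity $(\tol{K}{t})[U]=\tol{K[U]}{t}$ holds (the conditions defining $\tol{K}{t}$ refer only to subsets of $U$), and $K[U]$ is again $d$-collapsible by Lemma~\ref{lemma:hereditary}, so applying the statement to $K[U]$ shows that every induced subcomplex of $\tol{K}{t}$ has vanishing reduced homology in degrees $\ge h(t,d)$, i.e. $\tol{K}{t}$ is $h(t,d)$-Leray. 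The base case $t=0$ is immediate, since $\tol{K}{0}=K$ and a $d$-collapsible complex is $d$-Leray, so $\fhomology{i}{K}=0$ for $i\ge d=h(0,d)$.

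For the inductive step I would fix $t\ge 1$, assume the statement for all smaller tolerance parameters (and all $d$), fix $d$ and a $d$-collapsible $K$, and run a \emph{secondary} induction on the number of faces of $K$, using Tancer's recursive characterization of $d$-collapsibility (Lemma~\ref{lemma:dcoll_eq_def_recursive}). If $\dim(K)<d$ then $\dim(\tol{K}{t})\le\dim(K)+t\le d+t-1$, and since a direct computation from the recursion gives $h(t,d)\ge d+2t>d+t-1$ (for $d\ge1$; the case $d=0$ is immediate), the homology of $\tol{K}{t}$ vanishes in all degrees $\ge h(t,d)$ for dimension reasons. Otherwise Lemma~\ref{lemma:dcoll_eq_def_recursive} supplies a face $\sigma\in K$ with $|\sigma|=d$, contained in a unique maximal face $\sigma\cup U\ne\sigma$ (so $U\ne\varnothing$), such that $\cost(K,\sigma)$ is $d$-collapsible.

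Now I would invoke the long exact sequence of the pair $\tol{\cost(K,\sigma)}{t}\subseteq\tol{K}{t}$ (Theorem~\ref{thm:exact_sequence_pair}). Its term $\fhomology{i}{\tol{\cost(K,\sigma)}{t}}$ vanishes for $i\ge h(t,d)$ by the secondary induction hypothesis, because $\cost(K,\sigma)$ is $d$-collapsible and has strictly fewer faces than $K$; hence it suffices to show $\relhomology{i}{\tol{K}{t},\tol{\cost(K,\sigma)}{t}}=0$ for $i\ge h(t,d)$. Proposition~\ref{prop:main} identifies this relative group (using $|\sigma|=d$) with a direct sum, over $t$-subsets $W\subseteq V\setminus(\sigma\cup U)$, of copies of $\fhomology{i-d-1}{\,\bigcup_{1\le|\sigma'|\le\min\{t,d\}}\tol{\lk(K,\sigma\setminus\sigma')[U\cup W]}{t-|\sigma'|}\,}$. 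For each such $\sigma'\subseteq\sigma$ the complex $\lk(K,\sigma\setminus\sigma')$ is $d$-collapsible (Lemma~\ref{lemma:coll_of_link}), hence so is its induced subcomplex on $U\cup W$ (Lemma~\ref{lemma:hereditary}); since $t-|\sigma'|<t$, the primary induction hypothesis makes $\tol{\lk(K,\sigma\setminus\sigma')[U\cup W]}{t-|\sigma'|}$ an $h(t-|\sigma'|,d)$-Leray complex. Feeding these bounds into the Kalai--Meshulam estimate (Theorem~\ref{thm:kalai_meshulam_leray}) over the $\binom{d}{s}$ choices of $\sigma'$ of each size $s$, the union is $\bigl(\sum_{s=1}^{\min\{t,d\}}\binom{d}{s}(h(t-s,d)+1)-1\bigr)$-Leray, which is exactly $(h(t,d)-d-1)$-Leray by the definition of $h(t,d)$. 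Therefore $\fhomology{i-d-1}{\cdot}=0$ whenever $i-d-1\ge h(t,d)-d-1$, i.e. whenever $i\ge h(t,d)$; so the relative homology vanishes in that range, the long exact sequence forces $\fhomology{i}{\tol{K}{t}}=0$ for $i\ge h(t,d)$, and both inductions close.

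The closed-form and asymptotic evaluations of $h(t,d)$ are already recorded in Lemma~\ref{lemma:h_function}, so nothing further is needed there. I expect the only genuinely delicate points in this assembly to be the vertex-set bookkeeping — verifying $\tol{K}{t}[U]=\tol{K[U]}{t}$ and ensuring links and costars carry the ground sets prescribed so that Proposition~\ref{prop:main} and both induction hypotheses apply verbatim — and checking that the Kalai--Meshulam bound, with the multiplicities $\binom{d}{s}$, reproduces the recursion defining $h(t,d)$ exactly; I do not anticipate a real obstacle, since the substantive topological content is already packaged in Proposition~\ref{prop:main}.
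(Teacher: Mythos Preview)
Your proposal is correct and follows essentially the same approach as the paper's own proof: the same reduction to vanishing of $\fhomology{i}{\tol{K}{t}}$ via $\tol{K}{t}[U]=\tol{K[U]}{t}$ and Lemma~\ref{lemma:hereditary}, the same double induction (on $t$, then on the number of faces of $K$ via Lemma~\ref{lemma:dcoll_eq_def_recursive}), the same long exact sequence of the pair, the same appeal to Proposition~\ref{prop:main}, and the same closing step with Theorem~\ref{thm:kalai_meshulam_leray} reproducing the recursion for $h(t,d)$. The only cosmetic difference is that you justify $h(t,d)>d+t-1$ via $h(t,d)\ge d+2t$, whereas the paper simply asserts $d+t<h(t,d)$; both are easy consequences of the recursion.
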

\addtocounter{theorem}{-1}
\endgroup
\begin{proof}
Let $V$ be the vertex set of $K$.
We will show that $\fhomology{k}{\tol{K}{t}}=0$ for $k\geq h(t,d)$.
This is sufficient to prove the statement of the theorem, since $\tol{K}{t}[W]=\tol{K[W]}{t}$ and, by Lemma \ref{lemma:hereditary}, $K[W]$ is $d$-collapsible for every $W\subset V$.

We argue by induction on $t$. If $t=0$ the statement obviously holds, since every $d$-collapsible complex is $d$-Leray.

Let $t\geq 1$. We argue by induction on the size of $K$, that is, the number of simplices in $K$.
If $\dim(K)<d$, then $\dim(\tol{K}{t})<d+t< h(t,d)$, so the statement holds.
Otherwise, by Lemma \ref{lemma:dcoll_eq_def_recursive}, there is some $\sigma\in K$ such that $|\sigma|=d$, $\sigma$ is contained in a unique maximal face $\tau\neq \sigma$ of $K$, and $\cost(K,\sigma)$ is $d$-collapsible.

Let $U=\tau\setminus \sigma\neq\varnothing$.
By applying Theorem \ref{thm:exact_sequence_pair} to the pair $\tol{\cost(K,\sigma)}{t} \subset \tol{K}{t}$, we obtain the following long exact sequence:
\[
 \cdots \to \fhomology{k}{\tol{\cost(K,\sigma)}{t}} \to \fhomology{k}{(\tol{K}{t})}\to \relhomology{k}{\tol{K}{t},\tol{\cost(K,\sigma)}{t}}\to \cdots
\]
By the induction hypothesis, $\fhomology{k}{\tol{\cost(K,\sigma)}{t}}=0$ for $k\geq h(t,d)$. Therefore, it is sufficient to show that $\relhomology{k}{\tol{K}{t},\tol{\cost(K,\sigma)}{t}}=0$ for $k\geq h(t,d)$.

By Proposition~\ref{prop:main}, it is sufficient to show that, for every $W\subset V\setminus(\sigma\cup U)$ of size $t$, the homology group
\[
\fhomology{k}{\bigcup_{\substack{\sigma'\subset\sigma:\\ 1\leq |\sigma'|\leq t}} \tol{\lk(K,\sigma\setminus \sigma')[U\cup W]}{t-|\sigma'|}}
\]
%%%%%is trivial for $k\geq h(t,d)-d-1$.
is trivial for $k\geq h(t,d)-|\sigma|-1=h(t,d)-d-1$. 
Note that, for any $\sigma'\subset\sigma$, by Lemma~\ref{lemma:hereditary} and Lemma~\ref{lemma:coll_of_link}, the complex $\lk(K,\sigma\setminus\sigma')[U\cup W]$ is $d$-collapsible. Hence, by the induction hypothesis, for any $\sigma'\subset\sigma$ of size $1\leq|\sigma'|\leq t$, the complex $\tol{\lk(K,\sigma\setminus\sigma')[U\cup W]}{t-|\sigma'|}$ is $h(t-|\sigma'|,d)$-Leray. So, by Theorem~\ref{thm:kalai_meshulam_leray},
\begin{align*}
L\left(\bigcup_{\substack{\sigma'\subset\sigma:\\ 1\leq |\sigma'|\leq t}} \tol{\lk(K,\sigma\setminus \sigma')[U\cup W]}{t-|\sigma'|}\right) &\leq \left(\sum_{\substack{\sigma'\subset\sigma:\\ 1 \leq |\sigma'|\leq t}} h(t-|\sigma'|,d)+1\right) -1 \\&= \left(\sum_{s=1}^{\min\{t,d\}}\binom{d}{s}(h(t-s,d)+1)\right)-1
\\&= h(t,d)-d-1.
\end{align*}
In particular, 
\[
\fhomology{k}{\bigcup_{\substack{\sigma'\subset\sigma:\\ 1\leq |\sigma'|\leq t}} \tol{\lk(K,\sigma\setminus \sigma')[U\cup W]}{t-|\sigma'|}}=0
\]
for $k\geq h(t,d)-d-1$, as wanted.
\end{proof}

\section[Improved bound for d=2, t=1]{Improved bound for $d=2, t=1$}\label{sec:tolerance_d2t1}
By Theorem \ref{thm:main_tolerance} and Lemma \ref{lemma:h_function}, we proved that the $1$-tolerance complex $\tol{K}{1}$ is $(d^2+2d)$-Leray for every $d$-collapsible complex $K$.
This is of the same order of magnitude, but larger than the conjectural bound $\eta(d+1,2)-1= \left\lfloor\left(\frac{d+3}{2}\right)^2\right\rfloor-1$ for $d > 1$.
In this section, we prove Theorem \ref{thm:main_tolerance_d2t1case}, which gives a tight bound for the Leray number of $\tol{K}{1}$, in the special case that $K$ is $2$-collapsible.

For the proof we will need the following Lemma:
\begin{lemma}\label{lemma:union_of_links_d2t1}
Let $K$ be a $2$-collapsible complex on vertex set $V$. Let $\sigma=\{u,v\}\in K$ such that $\sigma$ is contained in a unique maximal face $\sigma\cup U$, where $U\neq \varnothing$ . Let $w\in V\setminus(U\cup \sigma)$. Then,
\[
\fhomology{k}{\lk(K,v)[U\cup\{w\}] \cup \lk(K,u)[U\cup \{w\}]}=0
\]
for $k\geq 2$.
\end{lemma}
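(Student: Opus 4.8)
The plan is to analyze the union $L := \lk(K,v)[U\cup\{w\}] \cup \lk(K,u)[U\cup\{w\}]$ by writing it as $A \cup B$ with $A = \lk(K,v)[U\cup\{w\}]$ and $B = \lk(K,u)[U\cup\{w\}]$, and applying the Mayer--Vietoris sequence (Theorem~\ref{thm:mayer_vietoris}). First I would observe that, by Lemma~\ref{lemma:coll_of_link} and Lemma~\ref{lemma:hereditary}, both $A$ and $B$ are $2$-collapsible, hence $2$-Leray; so $\fhomology{k}{A} = \fhomology{k}{B} = 0$ for $k \geq 2$. The intersection is $A \cap B = \lk(K,\{u,v\})[U\cup\{w\}]$, because a face $\eta$ avoiding $\{u,v\}$ lies in both links exactly when $\eta\cup\{u\}\in K$ and $\eta\cup\{v\}\in K$; a small point to check is that $\eta\cup\{u,v\}\in K$ follows — here is where the hypothesis that $\sigma=\{u,v\}$ is contained in a \emph{unique} maximal face $\sigma\cup U$ enters: if $\eta\cup\{u\}$ and $\eta\cup\{v\}$ are both faces and $\eta\subset U\cup\{w\}$, I want to conclude $\eta\subset U$ and hence $\eta\cup\{u,v\}\subset \sigma\cup U\in K$. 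So $A\cap B = \lk(K,\sigma)[U\cup\{w\}] = 2^U$ if $w\notin\lk(K,\sigma)$ (the typical case) or $2^{U\cup\{w\}}$; either way it is a full simplex on a nonempty vertex set (since $U\neq\varnothing$), hence contractible.

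Feeding this into Mayer--Vietoris, for $k\geq 2$ we get
\[
\fhomology{k}{A}\oplus\fhomology{k}{B} \to \fhomology{k}{L} \to \fhomology{k-1}{A\cap B}
\]
with the left term zero and, since $A\cap B$ is contractible, $\fhomology{k-1}{A\cap B}=0$ for $k-1\geq 0$, i.e. for $k\geq 1$. Thus $\fhomology{k}{L}=0$ for all $k\geq 2$, which is exactly the claim. Actually this argument gives the slightly stronger conclusion that $\fhomology{k}{L}=0$ for $k\geq 1$ as well, but the statement only asks for $k\geq 2$.

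**The main obstacle.** The one genuinely non-routine step is the identification $A\cap B = \lk(K,\{u,v\})[U\cup\{w\}]$ together with the verification that this complex is a full simplex — i.e.\ that it equals $2^{U}$ (possibly with $w$ adjoined). The containment $\lk(K,\{u,v\})[U\cup\{w\}]\supseteq 2^U$ is immediate since $\sigma\cup U\in K$; the reverse containment, that no face of $A\cap B$ can be larger than $U\cup\{w\}$ in an unexpected way, is where one must use that $\sigma$ lies in a \emph{unique} maximal face. If $\eta\subseteq U\cup\{w\}$ has $\eta\cup\{u,v\}\in K$, then $\eta\cup\{u,v\}$ sits inside some maximal face of $K$ containing $\{u,v\}$, which by hypothesis must be $\sigma\cup U$, forcing $\eta\subseteq U$; combined with $\eta\cup\{w\}$ possibly being a face on its own, we get that $A\cap B$ is the cone $2^U$ or $2^{U}*\{\varnothing,\{w\}\} = 2^{U\cup\{w\}}$. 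In both cases it is contractible, and the rest of the proof is the routine Mayer--Vietoris bookkeeping sketched above. I would also note in passing that Lemma~\ref{lemma:dcoll_eq_def_recursive}-style reasoning is not needed here; only the hereditary properties of collapsibility (Lemmas~\ref{lemma:hereditary} and~\ref{lemma:coll_of_link}) and the uniqueness of the maximal face are used.
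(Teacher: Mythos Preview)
Your Mayer--Vietoris setup and the observation that $A$ and $B$ are $2$-Leray are correct, and match the paper's approach. The gap is in your identification of $A\cap B$. You claim $A\cap B = \lk(K,\{u,v\})[U\cup\{w\}]$, but only the containment $\supseteq$ holds. A face $\eta\subset U\cup\{w\}$ lies in $A\cap B$ exactly when $\eta\cup\{u\}\in K$ \emph{and} $\eta\cup\{v\}\in K$; this does not force $\eta\cup\{u,v\}\in K$. Indeed, take $\eta=\{w\}$: whenever $\{u,w\},\{v,w\}\in K$ we have $\{w\}\in A\cap B$, yet $\{u,v,w\}\notin K$ necessarily, since the unique maximal face containing $\{u,v\}$ is $\sigma\cup U$ and $w\notin U$. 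So $\{w\}\in (A\cap B)\setminus \lk(K,\sigma)[U\cup\{w\}]$. Your justification in the second paragraph argues the harmless direction (if $\eta\cup\{u,v\}\in K$ then $\eta\subset U$), not the one you need. Consequently $A\cap B$ need not be either $2^U$ or $2^{U\cup\{w\}}$; it can be $2^U$ with an assortment of faces through $w$ attached.

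The paper's proof agrees with yours up to this point, then diverges precisely here: it applies Theorem~\ref{thm:exact_sequence_link_costar} at $w$, noting $(A\cap B)\setminus w = 2^U$, and reduces to showing that $Z:=\lk(A\cap B,w)=\lk(K,\{v,w\})[U]\cap\lk(K,\{u,w\})[U]$ is acyclic. This is where $2$-collapsibility enters nontrivially: a missing face of $Z$ of size at least $2$ would produce $x,y\in U$ with $\{x,y,w\}$ a missing face of $K$, and then $K[\{u,v,w,x,y\}]$ has exactly the two missing triangles $\{u,v,w\}$ and $\{x,y,w\}$, giving $\tilde H_2\neq 0$ and contradicting $2$-Lerayness. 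Your argument never invokes $2$-collapsibility beyond the hereditary Lemmas~\ref{lemma:hereditary} and~\ref{lemma:coll_of_link}, and that is not enough.
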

\begin{proof}
Let $A=\lk(K,v)[U\cup\{w\}]$ and $B=\lk(K,u)[U\cup\{w\}]$.
By Mayer-Vietoris (Theorem~\ref{thm:mayer_vietoris}), we have a long exact sequence
\[
\cdots\to 
\fhomology{k}{A}\bigoplus\fhomology{k}{B}
\to\fhomology{k}{A\cup B}\to  \fhomology{k-1}{A\cap B}\to \cdots
\]
Since $K$ is $2$-collapsible, then, by Lemma~\ref{lemma:hereditary} and Lemma~\ref{lemma:coll_of_link}, $A$ and $B$ are also $2$-collapsible. In particular, $\fhomology{k}{A}=\fhomology{k}{B}=0$ for $k\geq 2$. 
Therefore, it is enough to show that
\[
\tilde{H}_k(A\cap B)=0
\]
for $k\geq 1$.  
If $w\notin A\cap B$, then
\[
A\cap B=2^U,
\]
and the claim holds.
Otherwise, assume $w\in A\cap B$.
By Theorem \ref{thm:exact_sequence_link_costar}, we have a long exact sequence
\[
\cdots\to \rhomology{k}{(A\cap B)\setminus w}\to \rhomology{k}{A\cap B}\to \rhomology{k-1}{\lk(A\cap B,w)}\to\cdots
\]
Note that $(A\cap B)\setminus w = 2^U$; hence, $\fhomology{k}{(A\cap B)\setminus w}=0$ for all $k$.
Thus, it is enough to show that
\[
\fhomology{k}{\lk(A\cap B,w)}=
\fhomology{k}{\lk(K,\{v,w\})[U] \cap \lk(K,\{u,w\})[U]}=0
\]
for $k\geq 0$. 
Let 
\[
Z=\lk(K,\{v,w\})[U] \cap \lk(K,\{u,w\})[U].
\]
We will show that $Z$ is in fact a complete complex.
%%%%To see this, we will show that $Z$ is in fact a complete complex, where 
%%%%\[Z=\lk(K,\{v,w\})[U] \cap \lk(K,\{u,w\})[U].\]
%We will show that $Z$ does not contain missing faces of size larger than one.

Note that a set $\tau\subset U$ is a missing face of $Z$ if and only if it is a missing face of $\lk(K,\{v,w\})[U]$ or a missing face of $\lk(K,\{u,w\})[U]$. Moreover, $\tau\subset U$ is a missing face of $\lk(K,\{v,w\})[U]$ if and only if there is some $\eta\subset\{v,w\}$ such that $\tau\cup \eta$ is a missing face of $K$. Similarly, $\tau$ is a missing face of $\lk(K,\{u,w\})$ if and only if there is some $\eta\subset\{u,w\}$ such that $\tau\cup \eta$ is a missing face of $K$.

Assume for contradiction that $Z$ contains a missing face $\tau\subset U$ of size at least two.
Recall that, since $K$ is $2$-collapsible, all the missing faces of $K$ are of size at most $3$.
Then, since $U\in\lk(K,\{u,v\})$, $\tau$ must be of the form $\tau = \{x,y\}$, where $\{x,y,w\}$ is a missing face of $K$.

Now, we look at the induced subcomplex $L=K[\{u,v,w,x,y\}]$. By Lemma \ref{lemma:hereditary}, $L$ is $2$-collapsible.
It is easy to check that the missing faces of $L$ are exactly the two sets $\{u,v,w\}$ and $\{x,y,w\}$. Then, we observe that $\lk(L, w)$ is a $1$-dimensional sphere, and that $L\setminus w= 2^{\{u,v,x,y\}}$ is contractible. Therefore, by applying Theorem \ref{thm:exact_sequence_link_costar}, we obtain $\tilde{H}_2(L)\neq 0$.
This implies that $L$ is not $2$-Leray, which is a contradiction to $L$ being $2$-collapsible.
Hence, $Z$ is a complete complex, and therefore $\fhomology{k}{Z}=0$ for all $k\geq 0$.
\end{proof}

\begin{remark}[\bf{Theorem \ref{thm:main_tolerance_d2t1case}}]
{\em
Let $K$ be a $2$-collapsible complex. Then, $\tol{K}{1}$ is $5$-Leray.
}
\end{remark}
\begin{proof}
The proof is exactly the same as the $t=1$ case of the proof of Theorem \ref{thm:main_tolerance},  except that we replace the use of the Kalai-Meshulam bound (Theorem \ref{thm:kalai_meshulam_leray}) by Lemma \ref{lemma:union_of_links_d2t1}.
\end{proof}

We close the section with the following conjecture, which is a weaker version of  Conjectures~\ref{conj:tolerance_coll_to_coll} and \ref{conj:tolerance_leray_to_leray}.
\begin{conjecture}
Let $K$ be a $d$-collapsible simplicial complex, and let $t \geq 0$.
Then, $\tol{K}{t}$ is $(\eta(d+1,t+1)-1)$-Leray.
\end{conjecture}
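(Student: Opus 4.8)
The plan is to prove the stronger homological statement that $\fhomology{k}{\tol{K}{t}}=0$ for all $k\geq h(t,d)$ and every $d$-collapsible $K$; this suffices because $\tol{K}{t}[W]=\tol{K[W]}{t}$ and, by Lemma~\ref{lemma:hereditary}, every induced subcomplex $K[W]$ is again $d$-collapsible, so the induced-subcomplex condition in the definition of $d$-Lerayness reduces to the top-level vanishing. I would run a double induction: an outer induction on the tolerance parameter $t$, and, for each fixed $t$, an inner induction on the number of faces of $K$. The base case $t=0$ is immediate, since $\tol{K}{0}=K$ is $d$-collapsible, hence $d$-Leray, and $h(0,d)=d$.

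For the inductive step, fix $t\geq 1$ and assume the theorem for all smaller tolerance parameters. If $\dim(K)<d$ the tolerance complex has dimension at most $d+t-1<h(t,d)$ and there is nothing to prove, so assume $\dim(K)\geq d$. By Tancer's recursive characterization of $d$-collapsibility (Lemma~\ref{lemma:dcoll_eq_def_recursive}) there is a face $\sigma\in K$ with $|\sigma|=d$ that lies in a unique maximal face $\sigma\cup U$ (with $U\neq\varnothing$) and such that $\cost(K,\sigma)$ is again $d$-collapsible. Applying the long exact sequence of the pair $\tol{\cost(K,\sigma)}{t}\subset\tol{K}{t}$ (Theorem~\ref{thm:exact_sequence_pair}) and using the inner induction hypothesis to kill $\fhomology{k}{\tol{\cost(K,\sigma)}{t}}$ for $k\geq h(t,d)$, I reduce the problem to showing that the relative homology $\relhomology{k}{\tol{K}{t},\tol{\cost(K,\sigma)}{t}}$ vanishes for $k\geq h(t,d)$.

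The heart of the argument is the structural decomposition of this relative homology provided by Proposition~\ref{prop:main}, which identifies it with a direct sum, over all $W\subset V\setminus(\sigma\cup U)$ with $|W|=t$, of the shifted reduced homology groups
\[
\fhomology{k-d-1}{\bigcup_{\substack{\sigma'\subset\sigma:\\ 1\leq|\sigma'|\leq t}}\tol{\lk(K,\sigma\setminus\sigma')[U\cup W]}{t-|\sigma'|}}.
\]
For each proper subset $\sigma'\subset\sigma$ the complex $\lk(K,\sigma\setminus\sigma')[U\cup W]$ is $d$-collapsible by Lemmas~\ref{lemma:coll_of_link} and~\ref{lemma:hereditary}, and since $t-|\sigma'|<t$ the outer induction hypothesis tells me that $\tol{\lk(K,\sigma\setminus\sigma')[U\cup W]}{t-|\sigma'|}$ is $h(t-|\sigma'|,d)$-Leray. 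Applying the Kalai--Meshulam union bound (Theorem~\ref{thm:kalai_meshulam_leray}) to the union over the $\binom{d}{s}$ subsets $\sigma'$ of each size $s$, for $1\leq s\leq\min\{t,d\}$, bounds the Leray number of the union by
\[
\left(\sum_{s=1}^{\min\{t,d\}}\binom{d}{s}\bigl(h(t-s,d)+1\bigr)\right)-1=h(t,d)-d-1,
\]
which is exactly the recursion defining $h(t,d)$. Hence each summand vanishes in dimension $k-d-1$ once $k-d-1\geq h(t,d)-d-1$, i.e.\ once $k\geq h(t,d)$, completing both inductions.

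The main obstacle is the structural decomposition encapsulated in Proposition~\ref{prop:main}, since everything downstream is clean bookkeeping of Leray numbers matching the definition of $h$. Establishing it requires three ingredients working in concert: first, the exact set-theoretic description (Lemma~\ref{lemma:relative_decomposition}) of which faces of $\tol{K}{t}$ survive modulo $\tol{\cost(K,\sigma)}{t}$, namely those containing $\sigma$ whose complement lies in $\tol{\lk(K,\sigma)}{t}$ but avoids all the ``lower-tolerance'' link complexes attached to nonempty subsets $\sigma'\subset\sigma$; second, the dimension-shift isomorphism for relative homology (Lemma~\ref{lemma:relative_homology}), which strips off the common face $\sigma$ and drops the degree by $|\sigma|=d$; and third, a nerve-type argument (Lemma~\ref{lemma:nerve_contractible_intersections}) splitting the homology as a direct sum over the sets $W$ of size $t$, which works precisely because $U\neq\varnothing$ forces every multiple intersection of the relevant pieces to be a full simplex on $U$ and hence acyclic. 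Getting the indexing of the subsets $\sigma'$ and the sets $W$ to line up so that the Kalai--Meshulam count reproduces the summation $\sum_s\binom{d}{s}(h(t-s,d)+1)$ is the delicate point; once it is in place the theorem follows by the double induction above.
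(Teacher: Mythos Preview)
Your argument is a correct proof of Theorem~\ref{thm:main_tolerance}, and indeed reproduces essentially verbatim the paper's own proof of that theorem. However, the statement you were asked to prove is the \emph{Conjecture} appearing at the end of Section~\ref{sec:tolerance_d2t1}, which asserts the sharper bound $(\eta(d+1,t+1)-1)$-Leray, not the bound $h(t,d)$-Leray. The paper does not prove this conjecture; it explicitly leaves it open.

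The gap is quantitative but decisive: for $d>1$ one has $h(t,d)>\eta(d+1,t+1)-1$, so your conclusion is strictly weaker than what the conjecture demands. For instance, when $d=2$ and $t=1$ your recursion gives $h(1,2)=d^2+2d=8$ (Lemma~\ref{lemma:h_function}), while the conjectured bound is $\eta(3,2)-1=5$; the paper only manages to recover $5$ in this special case by replacing the Kalai--Meshulam union bound with the ad hoc Lemma~\ref{lemma:union_of_links_d2t1}. The inefficiency in your approach (and the paper's) is precisely the application of Theorem~\ref{thm:kalai_meshulam_leray} to the union $\bigcup_{\sigma'}\tol{\lk(K,\sigma\setminus\sigma')[U\cup W]}{t-|\sigma'|}$: this bound is generally far from tight because it ignores the heavy overlap among the pieces indexed by different $\sigma'$. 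Proving the conjecture would require a much finer analysis of the homology of that union, and no such analysis is known.
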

It would be interesting to settle this conjecture, at least in the special case $t=1$.

%\section{Concluding remarks}\label{sec:remarks}
\section{Tolerance in colorful Helly theorems}\label{sec:colorful}
The colorful Helly theorem is one of the most important generalizations of Helly's theorem.
It was observed by Lov\'{a}sz, and first appeared in B\'{a}r\'{a}ny's paper \cite{Bar82}.
It asserts the following.
\begin{theorem}[Lov\'{a}sz, B\'{a}r\'{a}ny~\cite{Bar82}]\label{thm:col_hel}
Let $\mathcal{F}_1,\mathcal{F}_2,\ldots,\mathcal{F}_{d+1}$ be finite families of convex sets in $\mathbb{R}^d$, and let $\mathcal{F}$ be their disjoint union.
Suppose every subfamily of $\mathcal{F}$ that contains exactly one member from each $\mathcal{F}_i$ has a point in common.
Then, some $\mathcal{F}_i$ has a point in common.
\end{theorem}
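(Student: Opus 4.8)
The plan is to derive Theorem~\ref{thm:col_hel} from its topological generalization, applied to the nerve. Write $\mathcal{F}=\mathcal{F}_1\sqcup\cdots\sqcup\mathcal{F}_{d+1}$, set $X=N(\mathcal{F})$, and view $X$ as a simplicial complex on the vertex set $V=V_1\sqcup\cdots\sqcup V_{d+1}$, where $V_i$ is the vertex set corresponding to $\mathcal{F}_i$. Since $X$ is $d$-representable, it is $d$-collapsible and hence $d$-Leray, by Wegner~\cite{Weg75}. The hypothesis of the theorem says precisely that every \emph{rainbow} set $\{v_1,\dots,v_{d+1}\}$ with $v_i\in V_i$ is a face of $X$, and its conclusion says precisely that $V_i\in X$ for some $i$. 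So it suffices to prove the \emph{topological colorful Helly theorem}: if $X$ is a $d$-Leray complex whose vertex set is partitioned into classes $V_1,\dots,V_{d+1}$ and every rainbow $(d+1)$-set is a face of $X$, then $V_i\in X$ for some $i$.

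To prove the topological statement I would argue by contradiction: assume $V_i\notin X$ for every $i$. In each class choose a minimal missing face $\sigma_i\subseteq V_i$ of $X$; since every vertex lies in some rainbow face we have $\{v\}\in X$ for all $v\in V$, so $|\sigma_i|\ge2$ for every $i$. Passing to the induced subcomplex $Y=X[\sigma_1\cup\cdots\cup\sigma_{d+1}]$ — which is again $d$-Leray, and still has every rainbow set, with $v_i\in\sigma_i$, as a face — we get $Y[\sigma_i]=\partial\Delta^{\sigma_i}$, a sphere of dimension $|\sigma_i|-2\ge0$ with fundamental cycle $z_i$. Set $m=\sum_{i=1}^{d+1}(|\sigma_i|-1)-1$; then $m\ge(d+1)-1=d$ since each $|\sigma_i|\ge2$. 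The goal is to show $\tilde H_m(Y)\ne0$, which contradicts $d$-Lerayness of $Y$; morally, this class should be represented by the join cycle $z_1\ast\cdots\ast z_{d+1}$, whose dimension is exactly $m$. (For $d=0$ there is nothing more to do: $Y=\partial\Delta^{\sigma_1}$ is already a sphere.)

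The hard part — and the crux of the whole statement — is producing this homology class. When every $\sigma_i$ has size exactly $2$ it is immediate: $Y$ then contains every partial rainbow set but no set with two vertices in a single class (such a set would contain some $\sigma_i\notin X$), so $Y$ is exactly the boundary of a $(d+1)$-dimensional cross-polytope, a $d$-sphere, which is not $d$-Leray. In general the facets of $z_1\ast\cdots\ast z_{d+1}$ are the ``anti-rainbow'' sets $\bigcup_i(\sigma_i\setminus\{w_i\})$, which the hypothesis does not place in $X$, so the join cycle need not even be supported on $Y$. I would instead build the class by induction on $\sum_i|\sigma_i|$, deleting one vertex from some $\sigma_i$ of size $\ge3$ and threading the relevant color classes through the long exact sequence of that vertex's link and costar (Theorem~\ref{thm:exact_sequence_link_costar}), while using the Kalai--Meshulam bound (Theorem~\ref{thm:kalai_meshulam_leray}) to keep the auxiliary unions that appear from raising dimensions. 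The recurring obstacle is that the link of a $d$-Leray complex is in general only $d$-Leray, never automatically $(d-1)$-Leray, so one cannot simply strip off a color class by passing to a link and invoking a lower-dimensional instance; the explicit homological bookkeeping is what stands in for that missing induction. This is in essence the topological colorful Helly theorem of Kalai and Meshulam, and in the convex setting it can of course also be obtained from the classical proof of the colorful Helly theorem.
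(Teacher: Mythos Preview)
The paper does not give a self-contained proof of Theorem~\ref{thm:col_hel}; it is stated as a classical result and later recovered from the Kalai--Meshulam matroidal version (Theorem~\ref{thm:top_col_hel}) by taking $K=N(\mathcal{F})$ and $M$ the partition matroid whose independent sets are the partial rainbow subfamilies. Your first paragraph performs exactly this reduction, so at that point you are aligned with the paper and could simply cite Theorem~\ref{thm:top_col_hel} and stop.

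Where you diverge is in trying to prove the topological colorful Helly statement from scratch, and here there is a genuine gap. You correctly observe that the join cycle $z_1\ast\cdots\ast z_{d+1}$ need not be supported on $Y$, and you propose to manufacture the homology class by induction on $\sum_i|\sigma_i|$ via the link/costar sequence. But the scheme you sketch does not close: after deleting a vertex $v\in\sigma_j$ with $|\sigma_j|\ge3$, the link $\lk(Y,v)$ is still only $d$-Leray, while the induction hypothesis you would need concerns a complex with one fewer color class (or a color class of smaller size) and hence a target dimension that has dropped by one. You flag this obstacle yourself and do not resolve it; invoking Theorem~\ref{thm:kalai_meshulam_leray} on ``auxiliary unions'' does not help, since that bound goes the wrong way (it raises, rather than lowers, the Leray number). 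The Kalai--Meshulam proof avoids this trap by working with the full matroidal statement and a different homological mechanism; absent that, your argument is complete only in the special case where every $|\sigma_i|=2$, which is the cross-polytope computation you carried out.
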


Note that the colorful Helly theorem implies Helly's theorem, by assuming all $\mathcal{F}_i$'s are identical.
In \cite[Theorem 4.4]{MO11}, a tolerant version of the colorful Helly theorem in the plane was proved.  Here is a more general statement. 
%Their result can be easily generalized, as follows:
\begin{theorem}%[Montejano and Oliveros {\cite[Theorem 4.4]{MO11}}]
\label{thm:tol_col_hel}
Let $\mathcal{F}_1,\mathcal{F}_2,\ldots,\mathcal{F}_{d+1}$ be finite families of convex sets in $\mathbb{R}^d$, and let $\mathcal{F}$ be their disjoint union.
Suppose that every subfamily $\mathcal{F}'$ of $\mathcal{F}$ of size $\eta(d+1,t+1)$ has a subfamily $\mathcal{F}''\subset \mathcal{F}'$ of size $|\mathcal{F}''|\geq |\mathcal{F}'|-t$ such that any subfamily of $\mathcal{F}''$ containing exactly one member from each $\mathcal{F}_i$ has a point in common.
Then, some $\mathcal{F}_i$ has a point in common with tolerance $t$.
\end{theorem}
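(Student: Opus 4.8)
The plan is to derive the theorem from the colorful Helly theorem (Theorem~\ref{thm:col_hel}) together with the extremal property defining the Erd\H{o}s--Gallai number $\eta(r,s)$, in the same spirit in which Theorem~\ref{thm:tolerance_helly_general} refines Helly's theorem, but with \emph{rainbow} non-faces playing the role of missing faces. First I would dispose of degenerate cases: if $|\mathcal{F}_i|\le t$ for some $i$, then $\mathcal{F}_i$ has a point in common with tolerance $t$ trivially (the empty subfamily already has a point in common), so we may assume $|\mathcal{F}_i|>t$ for every $i$. Reading the hypothesis, as in Theorem~\ref{thm:tolerance_helly_convex}, as applying to subfamilies of size \emph{at most} $\eta(d+1,t+1)$ removes any need to treat $|\mathcal{F}|<\eta(d+1,t+1)$ separately.

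Assume for contradiction that no $\mathcal{F}_i$ has a point in common with tolerance $t$. Let $\mathcal{N}$ be the $(d+1)$-uniform hypergraph on vertex set $\mathcal{F}$ whose edges are the \emph{bad rainbow sets}, i.e.\ the sets $\{A_1,\dots,A_{d+1}\}$ with $A_i\in\mathcal{F}_i$ for each $i$ and $\bigcap_{i=1}^{d+1}A_i=\varnothing$. The key claim is that $\tau(\mathcal{N})\ge t+1$. Indeed, if $T\subset\mathcal{F}$ were a cover of $\mathcal{N}$ with $|T|\le t$, then each $\mathcal{F}_i\setminus T$ is nonempty (as $|\mathcal{F}_i|>t$), and every subfamily of $\mathcal{F}\setminus T$ containing exactly one member from each $\mathcal{F}_i$ avoids $T$, hence is not an edge of $\mathcal{N}$, hence has a point in common. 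Applying the colorful Helly theorem to $\mathcal{F}_1\setminus T,\dots,\mathcal{F}_{d+1}\setminus T$ produces an index $j$ with $\bigcap_{A\in\mathcal{F}_j\setminus T}A\neq\varnothing$; since $|\mathcal{F}_j\cap T|\le t$, this means $\mathcal{F}_j$ has a point in common with tolerance $t$ --- a contradiction.

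Since $\mathcal{N}$ is $(d+1)$-uniform with $\tau(\mathcal{N})\ge t+1$, I would pass to an edge-minimal sub-hypergraph $\mathcal{N}'$ of $\mathcal{N}$ with $\tau(\mathcal{N}')\ge t+1$. Because deleting a single edge drops the covering number by at most one, $\tau(\mathcal{N}')=t+1$ and $\mathcal{N}'$ is $(t+1)$-critical, so by the definition of the Erd\H{o}s--Gallai number $|V(\mathcal{N}')|\le\eta(d+1,t+1)$. Set $\mathcal{F}'=V(\mathcal{N}')$; the bad rainbow sets contained in $\mathcal{F}'$ include all edges of $\mathcal{N}'$, so their covering number is at least $t+1$. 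Applying the hypothesis of the theorem to $\mathcal{F}'$ gives $\mathcal{F}''\subset\mathcal{F}'$ with $|\mathcal{F}''|\ge|\mathcal{F}'|-t$ such that every subfamily of $\mathcal{F}''$ containing exactly one member from each $\mathcal{F}_i$ has a point in common. But $T:=\mathcal{F}'\setminus\mathcal{F}''$ has size at most $t$, strictly less than the covering number of the bad rainbow sets in $\mathcal{F}'$, so some bad rainbow set $\rho\subset\mathcal{F}'$ is disjoint from $T$, i.e.\ $\rho\subset\mathcal{F}''$. Then $\rho$ is a rainbow selection inside $\mathcal{F}''$ with empty intersection, contradicting the choice of $\mathcal{F}''$, and the theorem follows.

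The main obstacle, and the place where the exponent $\eta(d+1,t+1)$ is pinned down, is the passage to the $(t+1)$-critical sub-hypergraph: one must check that a $(d+1)$-uniform hypergraph with covering number at least $t+1$ contains a $(t+1)$-critical sub-hypergraph, hence one on at most $\eta(d+1,t+1)$ vertices. This is elementary but is the colorful analogue of the reduction behind Theorem~\ref{thm:tolerance_helly_general}; the remaining ingredients are just the colorful Helly theorem and the bookkeeping of the degenerate cases.
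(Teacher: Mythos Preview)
Your proof is correct and is essentially the contrapositive of the paper's argument: both define the $(d+1)$-uniform hypergraph of ``bad'' rainbow selections, use the definition of $\eta(d+1,t+1)$ to pass between local and global covering number, and invoke the colorful Helly theorem on the complement of a small cover. The paper proceeds directly (hypothesis $\Rightarrow$ every $\eta(d+1,t+1)$-vertex subhypergraph has $\tau\le t$ $\Rightarrow$ the whole hypergraph has $\tau\le t$ $\Rightarrow$ apply Theorem~\ref{thm:col_hel}), while you run the same steps in reverse as a proof by contradiction; your explicit handling of the degenerate cases and of the passage to a $(t+1)$-critical subhypergraph is a bit more careful than the paper's sketch, but the substance is identical.
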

For completeness, we present a proof, closely following the argument presented in \cite{MO11} for the special case $d=2, t=1$.
\begin{proof}[Proof of Theorem \ref{thm:tol_col_hel}]
%To prove Theorem~\ref{thm:tol_col_hel}, 

Consider the hypergraph $\mathcal{H}$ whose vertex set is $\mathcal{F}$ and whose edges are the subfamilies that contain exactly one member from each $\mathcal{F}_i$ and that do not have a point in common. 

By the assumption of the theorem, every subhypergraph of $\mathcal{H}$ consisting of $\eta(d+1,t+1)$ vertices has covering number at most $t$. Therefore, by the definition of $\eta(d+1,t+1)$, $\mathcal{H}$ has covering number at most $t$. That is, there is a subfamily $\mathcal{F}'$ of $\mathcal{F}$ of size $|\mathcal{F}'|\geq |\mathcal{F}|-t$ such that every subfamily of $\mathcal{F}'$ consisting of exactly one set from each $\mathcal{F}_i$ has a point in common. Therefore, by Theorem~\ref{thm:col_hel} (applied to the family $\mathcal{F}'$), there is some $i$ such that $\mathcal{F}_i\cap \mathcal{F}'$ has a point in common.
Since $|\mathcal{F}_i \cap \mathcal{F}'|\geq |\mathcal{F}_i|-t$, $\mathcal{F}_i$ has a point in common with tolerance $t$.
\end{proof}
Similarly as above, we observe that Theorem~\ref{thm:tol_col_hel} implies Theorem~\ref{thm:tolerance_helly_convex}, by assuming all $\mathcal{F}_i$'s are identical.

As an application of our main results, we obtain new tolerant variants of the colorful Helly theorem.

A family $M$ of subsets of a non-empty set $V$ is a {\em matroid} if it satisfies
\begin{itemize}
    \item[(i)] $\varnothing \in M$, 
    \item[(ii)] for all $A' \subset A \subset V$, if $A \in M$ then $A' \in M$, and
    \item[(iii)] if $A, B \in M$ and $|A| < |B|$, then there exists $x \in B \setminus A$ such that $A \cup \{x\} \in M$.
\end{itemize}
The {\em rank function} of a matroid $M$ on $V$ is a function $\rho: 2^V \to \mathbb{N}$ such that for every $W \subset V$, $\rho(W)$ equals to the maximal size of $W' \subset W$ with $W' \in M$.
Note that the conditions (i) and (ii) allow us to regard a matroid $M$ as a simplicial complex.
The colorful Helly theorem can be generalized topologically as follows:

\begin{theorem}[Kalai and Meshulam, {\cite[Theorem 1.6]{KM05}}]\label{thm:top_col_hel}
Let $K$ be a $d$-Leray complex on $V$ and let $M$ be a matroid on $V$ with rank function $\rho$.
If $M \subset K$, then there exists $\sigma \in K$ such that $\rho(V \setminus \sigma) \leq d$.
\end{theorem}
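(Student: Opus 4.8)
The statement is equivalent, ranging over all $d\geq 0$, to the inequality
\[
L(K)\ \geq\ g(K,M)\ :=\ \min_{\sigma\in K}\rho(V\setminus\sigma),
\]
valid whenever $M$ is a matroid on $V$ with $M\subseteq K$ (the minimum is over a nonempty set, since $\varnothing\in M\subseteq K$), where in $g(K',M')$ the rank function is always that of the matroid in the second argument: if $K$ is $d$-Leray then $L(K)\leq d$, hence $g(K,M)\leq d$, i.e. some $\sigma\in K$ has $\rho(V\setminus\sigma)\leq d$. The plan is to prove this inequality by induction on $|V|$.

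For the inductive step, assume first that $M$ has a loop $v$; deleting it changes neither the restricted rank function nor $g(K,M)$ and passes to the induced subcomplex $K[V\setminus v]$, which has $L\leq L(K)$, so the induction applies. Next, if $M$ has a coloop $v$, then $M=\{v\}\ast M'$ with $M'=M[V\setminus v]=\lk(M,v)\subseteq\lk(K,v)$, and a short computation gives $g(\lk(K,v),M')\geq g(K,M)$; the induction then closes through the inequalities of the next sentence. So we may assume $M$ is loopless and coloopless. Fix $v\in V$ and set $K_{\partial}=K[V\setminus v]$, $K_{\ell}=\lk(K,v)$, $M_{\partial}=M[V\setminus v]$, $M_{\ell}=\lk(M,v)$; then $M_{\partial}\subseteq K_{\partial}$, $M_{\ell}\subseteq K_{\ell}$, and submodularity of $\rho$ yields $g(K_{\partial},M_{\partial})\geq g(K,M)-1$ and $g(K_{\ell},M_{\ell})\geq g(K,M)-1$. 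Applying the link--costar long exact sequence (Theorem~\ref{thm:exact_sequence_link_costar}) to every induced subcomplex $K[W]$ with $v\in W$ gives $L(K)\geq L(K_{\partial})$ (immediate, as $K_{\partial}$ is induced in $K$) and $L(K_{\ell})\leq\max\{L(K)-1,\,L(K_{\partial})\}$, so that $L(K_{\ell})>L(K_{\partial})$ forces $L(K)\geq L(K_{\ell})+1$. Feeding in the inductive bounds $L(K_{\partial})\geq g(K_{\partial},M_{\partial})$ and $L(K_{\ell})\geq g(K_{\ell},M_{\ell})$, one obtains $L(K)\geq g(K,M)$ in every case except the ``balanced'' one, in which, for the chosen $v$, both $g(K_{\partial},M_{\partial})$ and $g(K_{\ell},M_{\ell})$ equal $g(K,M)-1$ and also $L(K_{\partial})=L(K_{\ell})=g(K,M)-1$.

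The main obstacle is precisely this balanced case: there the long exact sequence at a single vertex is consistent with $L(K)=g(K,M)-1$, so deletion and contraction at one vertex cannot conclude. My plan for it is to refuse an arbitrary $v$ and instead choose $v$ inside a minimizing face $\sigma_{0}\in K$ of maximal size --- which is automatically a facet of $K$, since any enlargement of $\sigma_{0}$ would again be a minimizer --- and, when $g(K,M)<\rho(V)$, to take $v$ with $\rho((V\setminus\sigma_{0})\cup\{v\})=\rho(V\setminus\sigma_{0})+1$, which is designed to pull $g(K_{\ell},M_{\ell})$ back up to $g(K,M)$; the residual difficulty is to verify that this choice actually works, or to analyse the remaining configurations. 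One then reduces to the genuinely extremal situation $g(K,M)=\rho(V)$, where removing any face of $K$ leaves $M$ spanning and $M$ therefore contains two disjoint bases $B,B'$, both lying in $K$; there I would either induct on $\rho(V)$ (using that $M_{\ell}$ has strictly smaller rank) or exhibit directly a nonzero class in $\tilde H_{\rho(V)-1}$ of an induced subcomplex of $K$ assembled from $B$ and $B'$. This global step carries the real content, and is the point at which I would follow the original argument of Kalai and Meshulam~\cite{KM05}.
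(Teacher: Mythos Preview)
The paper does not prove Theorem~\ref{thm:top_col_hel}; it is quoted from Kalai and Meshulam~\cite{KM05} and applied as a black box to derive Theorems~\ref{thm:tol_top_col_hel} and~\ref{thm:tol_top_col_hel_d2t1}. There is therefore no proof in the paper to compare your proposal against.

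That said, your proposal is not a complete argument. The deletion--contraction framework and the inequalities $g(K_{\partial},M_{\partial})\geq g(K,M)-1$ and $g(K_{\ell},M_{\ell})\geq g(K,M)-1$ are correct, but you yourself identify the ``balanced'' case as the obstacle and do not resolve it: your suggested choice of $v$ inside a maximal minimizing facet is only a heuristic whose effect on $g(K_{\ell},M_{\ell})$ you do not verify, and for the residual extremal situation $g(K,M)=\rho(V)$ you explicitly say you would ``follow the original argument of Kalai and Meshulam.'' That is precisely where the content of the theorem lies; a proof that defers its decisive step to the very paper whose result it is reproving is circular. If you want to complete an argument along these lines, you must actually produce, in that extremal case, a nonzero class in $\tilde H_{\rho(V)-1}$ of some induced subcomplex of $K$, and this is not something the link--costar exact sequence alone will hand you.
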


Taking $K$ to be the nerve of the family $\mathcal{F}$ and $M$ to be the matroid whose members are the subfamilies of $\mathcal{F}$ containing at most one member from each $\mathcal{F}_i$, we can recover Theorem \ref{thm:col_hel} from Theorem \ref{thm:top_col_hel}.

By combining Theorem~\ref{thm:top_col_hel} with Theorems~\ref{thm:main_tolerance} and \ref{thm:main_tolerance_d2t1case}, we obtain the following results.

\begin{theorem}\label{thm:tol_top_col_hel}
Let $K$ be a $d$-collapsible complex on $V$ and let $M$ be a matroid on $V$ with rank function $\rho$. If $M \subset \tol{K}{t}$, then there exists $\sigma \in \tol{K}{t}$ such that $\rho(V \setminus \sigma) \leq h(t,d)$.
\end{theorem}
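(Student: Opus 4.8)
The plan is to deduce Theorem \ref{thm:tol_top_col_hel} directly from the topological colorful Helly theorem (Theorem \ref{thm:top_col_hel}) together with our main result on Leray numbers of tolerance complexes (Theorem \ref{thm:main_tolerance}). First I would recall the hypotheses: $K$ is $d$-collapsible on vertex set $V$, $M$ is a matroid on $V$ (regarded as a simplicial complex via axioms (i) and (ii)), and $M \subset \tol{K}{t}$. By Theorem \ref{thm:main_tolerance}, since $K$ is $d$-collapsible, the tolerance complex $\tol{K}{t}$ is $h(t,d)$-Leray. So I would set $L = \tol{K}{t}$, which is an $h(t,d)$-Leray complex on $V$ with $M \subset L$.

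The second step is simply to apply Theorem \ref{thm:top_col_hel} with the $d$ in its statement replaced by $h(t,d)$. This yields a face $\sigma \in L = \tol{K}{t}$ such that $\rho(V \setminus \sigma) \leq h(t,d)$, which is exactly the conclusion we want. That is essentially the whole argument — the proof is a one-line combination of two already-established theorems.

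The only thing to double-check is that the objects line up correctly: Theorem \ref{thm:top_col_hel} requires a Leray complex containing the matroid, and $\tol{K}{t}$ being $h(t,d)$-Leray (from Theorem \ref{thm:main_tolerance}) together with the assumption $M \subset \tol{K}{t}$ supplies precisely this. There is no real obstacle here; if anything, the ``hard part'' was already done in proving Theorem \ref{thm:main_tolerance}, and this theorem is a corollary-style application. I might also remark that the analogous statement with $h(t,d)$ replaced by $5$ holds when $d = 2$ and $t = 1$, by using Theorem \ref{thm:main_tolerance_d2t1case} in place of Theorem \ref{thm:main_tolerance} in the same argument.

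Concretely, the proof would read: By Lemma \ref{lemma:hereditary} (or rather directly), since $K$ is $d$-collapsible, Theorem \ref{thm:main_tolerance} gives that $\tol{K}{t}$ is $h(t,d)$-Leray. Since $M \subset \tol{K}{t}$ and $M$ is a matroid on $V$, we may apply Theorem \ref{thm:top_col_hel} to the complex $\tol{K}{t}$ (with parameter $h(t,d)$), obtaining a simplex $\sigma \in \tol{K}{t}$ with $\rho(V \setminus \sigma) \leq h(t,d)$, as desired.
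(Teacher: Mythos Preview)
Your proposal is correct and matches the paper's approach exactly: the paper states that Theorem~\ref{thm:tol_top_col_hel} follows by combining Theorem~\ref{thm:top_col_hel} with Theorem~\ref{thm:main_tolerance}, and that is precisely what you do. The mention of Lemma~\ref{lemma:hereditary} is unnecessary (you noticed this yourself), but otherwise there is nothing to add.
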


\begin{theorem}\label{thm:tol_top_col_hel_d2t1}
Let $K$ be a $2$-collapsible complex on $V$ and let $M$ be a matroid on $V$ with rank function $\rho$. If $M \subset \tol{K}{1}$, then there exists $\sigma \in \tol{K}{1}$ such that $\rho(V \setminus \sigma) \leq 5$.
\end{theorem}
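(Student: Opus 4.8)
The plan is to obtain the statement as a direct corollary of two results already in place: the bound on the Leray number of the $1$-tolerance complex of a $2$-collapsible complex, and the topological colorful Helly theorem of Kalai and Meshulam. All the genuine work is front-loaded into the former; the present theorem is then a two-line deduction.

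First I would invoke Theorem \ref{thm:main_tolerance_d2t1case}: since $K$ is $2$-collapsible, the tolerance complex $\tol{K}{1}$ is $5$-Leray. Note that $\tol{K}{1}$ has vertex set $V$ by construction, so it lives on the same ground set as the matroid $M$. Next I would apply Theorem \ref{thm:top_col_hel} with the $d$-Leray complex taken to be $\tol{K}{1}$ (hence $d=5$) and the matroid $M$ on $V$, which by hypothesis satisfies $M\subset \tol{K}{1}$. That theorem then produces a face $\sigma\in \tol{K}{1}$ with $\rho(V\setminus\sigma)\leq 5$, which is exactly the desired conclusion.

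I do not expect a real obstacle here. The only things worth checking are bookkeeping points: that the matroid convention used in the statement (a downward-closed family, regarded as a simplicial complex) is the same one required by Theorem \ref{thm:top_col_hel}, and that the vertex-set conventions for $\tol{K}{1}$ agree with those used when citing Lemma \ref{lemma:hereditary} and Theorem \ref{thm:main_tolerance_d2t1case}. Both hold. As with Theorem \ref{thm:tol_top_col_hel}, I would also remark that specializing to $K=N(\mathcal{F})$ for a finite family $\mathcal{F}$ of convex sets in $\mathbb{R}^2$ and $M$ the partition matroid whose independent sets are the subfamilies meeting each color class at most once recovers a tolerant colorful Helly theorem in the plane, with the tight constant $5=\eta(3,2)-1$ coming from Theorem \ref{thm:main_tolerance_d2t1case}.
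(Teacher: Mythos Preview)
Your proposal is correct and matches the paper's approach exactly: the paper states this theorem as an immediate consequence of combining Theorem~\ref{thm:main_tolerance_d2t1case} (giving that $\tol{K}{1}$ is $5$-Leray) with Theorem~\ref{thm:top_col_hel} (Kalai--Meshulam), with no additional argument. Your remark about specializing to nerves of planar convex families with the partition matroid is likewise exactly how the paper derives Corollary~\ref{cor:plane_six}.
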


As a corollary of Theorem \ref{thm:tol_top_col_hel_d2t1}, we obtain the following version of the tolerant colorful Helly theorem in the plane:
\begin{corollary}\label{cor:plane_six}
Let $\mathcal{F}_1,\mathcal{F}_2,\ldots,\mathcal{F}_6$ be finite families of convex sets in the plane, and let $\mathcal{F}$ be their disjoint union.
Suppose that  every subfamily $\mathcal{F}'$ of $\mathcal{F}$ that contains exactly one set from each $\mathcal{F}_i$ has a point in common with tolerance $1$.
Then, some $\mathcal{F}_i$ has a point in common with tolerance $1$.
\end{corollary}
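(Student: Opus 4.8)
The plan is to deduce Corollary~\ref{cor:plane_six} directly from Theorem~\ref{thm:tol_top_col_hel_d2t1}, encoding the colorful data as a complex-plus-matroid pair in exactly the way Theorem~\ref{thm:top_col_hel} recovers Theorem~\ref{thm:col_hel}. First I would dispose of the degenerate case: if some $\mathcal{F}_i$ is empty, then $|\mathcal{F}_i|\leq 1$, so $\mathcal{F}_i$ trivially has a point in common with tolerance $1$ and we are done. So assume from now on that all six families are non-empty. Then set $V=\mathcal{F}$, let $K=N(\mathcal{F})$ be the nerve of the family, and let $M$ be the partition matroid on $V$ whose independent sets are the subfamilies containing at most one member from each $\mathcal{F}_i$; its rank function $\rho$ sends $W\subset V$ to the number of indices $i$ with $W\cap\mathcal{F}_i\neq\varnothing$. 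Since the members of $\mathcal{F}$ are convex sets in the plane, $K$ is $2$-representable, hence $2$-collapsible by Wegner's theorem.

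The one substantive point is to check that $M\subset\tol{K}{1}$. I would first observe that a face $\tau\in\tol{K}{1}$ is precisely a subfamily of $\mathcal{F}$ that has a point in common with tolerance $1$: unwinding the definition of $\tol{K}{1}$, $\tau\in\tol{K}{1}$ means there is $\eta\subset\tau$ with $|\tau\setminus\eta|\leq 1$ and $\bigcap_{A\in\eta}A\neq\varnothing$, which is exactly the tolerance-$1$ condition. Now take any $\tau\in M$. Since every $\mathcal{F}_i$ is non-empty, $\tau$ can be enlarged to a subfamily $\tau'$ containing exactly one member of each $\mathcal{F}_i$. By hypothesis $\tau'$ has a point in common with tolerance $1$, so $\tau'\in\tol{K}{1}$, and since $\tol{K}{1}$ is a simplicial complex and $\tau\subset\tau'$, we get $\tau\in\tol{K}{1}$. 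Hence $M\subset\tol{K}{1}$.

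Now Theorem~\ref{thm:tol_top_col_hel_d2t1} applies and yields some $\sigma\in\tol{K}{1}$ with $\rho(V\setminus\sigma)\leq 5$. As there are six non-empty colour classes, this forces $\mathcal{F}_i\subset\sigma$ for some $i$. Because $\sigma\in\tol{K}{1}$, there is a subfamily $\sigma'\subset\sigma$ with $|\sigma'|\geq|\sigma|-1$ and $\bigcap_{A\in\sigma'}A\neq\varnothing$. Then $\mathcal{F}_i\cap\sigma'$ is a subfamily of $\mathcal{F}_i$ with $|\mathcal{F}_i\cap\sigma'|\geq|\mathcal{F}_i|-1$ and $\bigcap_{A\in\mathcal{F}_i\cap\sigma'}A\supset\bigcap_{A\in\sigma'}A\neq\varnothing$, so $\mathcal{F}_i$ has a point in common with tolerance $1$, as required.

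I do not expect any serious obstacle: once Theorem~\ref{thm:tol_top_col_hel_d2t1} is available, the argument is a matter of unwinding definitions. The only places that need a little care are the passage from ``exactly one member from each family'' in the hypothesis to ``at most one member'' needed for the inclusion $M\subset\tol{K}{1}$ — handled by the extension step — and the trivial treatment of empty colour classes, which must be separated out before the machinery is invoked since then the hypothesis is vacuous.
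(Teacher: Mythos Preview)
Your proof is correct and follows exactly the approach the paper intends: the paper states Corollary~\ref{cor:plane_six} as an immediate consequence of Theorem~\ref{thm:tol_top_col_hel_d2t1} (without spelling out the details), using the same nerve-plus-partition-matroid encoding that it explicitly describes for deducing Theorem~\ref{thm:col_hel} from Theorem~\ref{thm:top_col_hel}. Your write-up fills in precisely the routine verifications the paper omits, including the careful handling of empty colour classes and the extension of partial transversals to full ones.
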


It may be interesting to try to find a direct combinatorial proof of Corollary~\ref{cor:plane_six}.

%\section*{Acknowledgement}
%The authors thank ...

\bibliographystyle{abbrv}
\bibliography{biblio}

\begin{thebibliography}{10}

\bibitem{ALP17}
N.~Amenta, J.~A. De~Loera, and P.~Sober\'{o}n.
\newblock Helly's theorem: new variations and applications.
\newblock In {\em Algebraic and geometric methods in discrete mathematics},
  volume 685 of {\em Contemp. Math.}, pages 55--95. Amer. Math. Soc.,
  Providence, RI, 2017.

\bibitem{Bar82}
I.~B\'{a}r\'{a}ny.
\newblock A generalization of {C}arath\'{e}odory's theorem.
\newblock {\em Discrete Math.}, 40(2-3):141--152, 1982.

\bibitem{Bjor03}
A.~Bj\"{o}rner.
\newblock Nerves, fibers and homotopy groups.
\newblock {\em J. Combin. Theory Ser. A}, 102(1):88--93, 2003.

\bibitem{Erd61}
P.~Erd\H{o}s and T.~Gallai.
\newblock On the minimal number of vertices representing the edges of a graph.
\newblock {\em Publ. Math. Inst. Hungar. Acad. Sci}, 6(18):1--203, 1961.

\bibitem{Hel23}
E.~Helly.
\newblock \"{U}ber mengen konvexer k\"{o}rper mit gemeinschaftlichen punkte.
\newblock {\em Jahresbericht der Deutschen Mathematiker-Vereinigung},
  32:175--176, 1923.

\bibitem{KM05}
G.~Kalai and R.~Meshulam.
\newblock A topological colorful {H}elly theorem.
\newblock {\em Adv. Math.}, 191(2):305--311, 2005.

\bibitem{KM06}
G.~Kalai and R.~Meshulam.
\newblock Intersections of {L}eray complexes and regularity of monomial ideals.
\newblock {\em Journal of Combinatorial Theory, Series A}, 113(7):1586--1592,
  2006.

\bibitem{khmel}
I.~Khmelnitsky.
\newblock $d$-collapsibility and its applications.
\newblock Master's thesis, Technion, Haifa, 2018.

\bibitem{kim2019complexes}
M.~Kim and A.~Lew.
\newblock Complexes of graphs with bounded independence number.
\newblock preprint, \url{https://arxiv.org/abs/1912.12605}, 2019.

\bibitem{Mes01}
R.~Meshulam.
\newblock The clique complex and hypergraph matching.
\newblock {\em Combinatorica}, 21(1):89--94, 2001.

\bibitem{MO11}
L.~Montejano and D.~Oliveros.
\newblock Tolerance in {H}elly-type theorems.
\newblock {\em Discrete \& Computational Geometry}, 45(2):348--357, 2011.

\bibitem{Tan10}
M.~Tancer.
\newblock {$d$}-collapsibility is {NP}-complete for {$d\geq 4$}.
\newblock {\em Chic. J. Theoret. Comput. Sci.}, pages Article 3, 32, 2010.

\bibitem{Tan13}
M.~Tancer.
\newblock Intersection patterns of convex sets via simplicial complexes: a
  survey.
\newblock In {\em Thirty essays on geometric graph theory}, pages 521--540.
  Springer, New York, 2013.

\bibitem{Tuz85}
Z.~Tuza.
\newblock Critical hypergraphs and intersecting set-pair systems.
\newblock {\em Journal of Combinatorial Theory, Series B}, 39(2):134--145,
  1985.

\bibitem{Weg75}
G.~Wegner.
\newblock {$d$}-collapsing and nerves of families of convex sets.
\newblock {\em Arch. Math. (Basel)}, 26:317--321, 1975.

\end{thebibliography}

\end{document}